\definecolor{mygray}{gray}{0.9}
\definecolor{deeppink}{RGB}{255,20,147}
\definecolor{mygreen}{rgb}{0.05, 0.576, 0.03}
\definecolor{myred}{rgb}{0.768, 0.09, 0.09}
\newtheorem{theorem}{Theorem}[section]
\newtheorem{proposition}[theorem]{Proposition}
\newtheorem{lemma}[theorem]{Lemma}
\newtheorem{corollary}[theorem]{Corollary}
\newtheorem{remark}[theorem]{Remark}
\newtheorem{example}[theorem]{Example}
\newcommand{\Q}{\mathsf{Q}}
\newcommand{\R}{\mathbb{R}}
\newcommand{\E}{\textsf{E}}
\newcommand{\dd}{\mathrm{d}}
\newcommand{\ii}{{\rm i}}
\newcommand{\PP}{\textsf{P}}
\newcommand{\Var}{\textsf{Var}}
\newcommand{\Gf}{\mathrm{\Gamma}}
\newcommand{\Li}{\mathrm{Li}}
\renewcommand{\Re}{\mathrm{Re}}
\numberwithin{equation}{section}
\numberwithin{equation}{subsection}
\numberwithin{equation}{subsubsection}
\renewcommand{\theequation}{%
  \ifnum\value{subsubsection}>0
    \thesubsubsection.\arabic{equation}%
  \else\ifnum\value{subsection}>0
    \thesubsection.\arabic{equation}%
  \else
    \thesection.\arabic{equation}%
  \fi\fi}
\renewcommand{\Phi}{\varPhi}
\renewcommand{\Lambda}{\varLambda}
\renewcommand{\Psi}{\varPsi}
\renewcommand{\Omega}{\varOmega}
\begin{document}

\title{\bf On certain integral functionals of integer-valued subordinators}
\author{Dongdong Hu\thanks{Yiwu Industrial \& Commercial College, Yiwu, China. Email: \underline{hudongdong@ywicc.edu.cn}} \and Hasanjan Sayit\thanks{Xi'an Jiaotong Liverpool University, Suzhou, China. Email: \underline{hasanjan.sayit@xjtlu.edu.cn}} \and Weixuan Xia\thanks{University of Southern California, Los Angeles, USA. Email: \underline{weixuanx@usc.edu}}}

\date{2025}

\maketitle

\begin{abstract}
  It is known that the exponential functional of a Poisson process admits a probability density function in the form of an infinite series. In this paper, we obtain an explicit expression for the density function of the exponential functional of any integer-valued subordinator, and by extension, limit representations for that of an arbitrary pure-jump subordinator. With an added positive drift, the density function is expressed via piecewise basis functions governed by a functional relation. Closed-form density functions for these cases have been established only for a few special instances of L\'{e}vy processes in the past literature. Our work substantially advances this line of research by providing an analytical perspective on the distribution of a broad class of exponential L\'{e}vy functionals, also suggesting potential methodological extensions to general purely discontinuous L\'{e}vy processes. Moreover, we consider arbitrary decreasing functionals of integer-valued subordinators by deriving sufficient and necessary conditions for their convergence, which are then applied to obtain limit-series representations for the density functions of inverse-power functionals. The numerical performance of the proposed formulae is demonstrated through various examples of well-known distributions. \medskip\\
  \textsc{MSC2020 Classifications:} 60G51; 60E05 \medskip\\
  \textsc{Keywords:} Subordinators; Exponential functionals; Fokker--Planck equation; Dirichlet series; Inverse-power functionals; Time-changed processes
\end{abstract}

\medskip

\section{Introduction}\label{S:1}

Constructing new stochastic processes by time-changing existing ones is a widely used technique. In particular, applying an independent subordinator to a Poisson process generates a L\'{e}vy process with positive integer-valued jumps. Due to their counting nature, such processes have numerous real-world applications, particularly in modeling clustered events, including the number of injuries in car crashes, the destruction of buildings during earthquakes, and customer arrivals at service centers. In recent years, several studies have focused on these counting processes, highlighting various special cases of time-changing subordinators; see Garra \text{et al.} \cite{GOS16}, Orsingher and Polito \cite{OP12A}, Di Crescenzo \text{et al.} \cite{D15}, Buchak and Sakhho \cite{BS18}, and Kumar \text{et al.} \cite{K11}. More specifically, Orsingher and Toaldo \cite{OT15} discussed time-changed Poisson processes for general subordinators and derived explicit expressions for the equations governing their finite-dimensional distributions as well as the distributions of associated hitting times. Noteworthily, the processes discussed in these works are all integer-valued subordinators (IVSs hereinafter), which are nonnegative integer-valued L\'evy processes with nondecreasing sample paths.

A general class of discrete-valued L\'evy processes were originally discussed in Barndorff-Nielsen \text{et al.} \cite{BPS12} in connection with low-latency financial econometrics. As shown in Barndorff-Nielsen \text{et al.} \cite[Proposition 3.1]{BPS12}, any integer-valued L\'evy process can be written as the difference of two independent IVSs. An important class of IVSs consists of time-changed Poisson processes, which are equal in distribution to compound Poisson processes with positive integer-valued jumps, as discussed in Barndorff-Nielsen \text{et al.} \cite[Theorem 3.2]{BPS12}. Included in this class are popular L\'{e}vy models such as Poisson inverse Gaussian L\'evy processes (Barndorff-Nielsen \text{et al.} \cite[Example 3.3]{BPS12}, time-fractional Poisson processes (Orsingher \cite{O13}) and multiply iterated Poisson processes (MIPPs) recently considered in Hu \text{et al.} \cite{H25}, all of which result from applying a suitable time change to a Poisson process. In Hu \text{et al.} \cite{H25}, various distribution properties of MIPPs, including the distribution of the size of the first jump and the their sojourn times, were studied. Some other notable examples of IVSs include negative-binomial processes, Poisson processes of order $k$, and subordinated compound Poisson processes of order $k$ -- see, respectively, Xia \cite{X20}, Kostadinova and Minkova \cite{KM13}, and Sengar and Upadhye \cite{SU20}.

In this paper, our interest lies in the family of infinite-time integral functionals associated with IVSs, subject to a nonnegative drift, with an emphasis on exponential functionals. A general form of exponential L\'{e}vy functionals is given by $V=\int_0^{\infty}e^{-\xi_{s-}}d\eta_s$, where $\xi$ is a (possibly two-sided) L\'evy process and $\eta$ an independent L\'evy subordinator. This type of exponential functionals have been extensively discussed in the literature. For instance, Erickson and Maller \cite{EM05} provided sufficient and necessary conditions for the existence of $V$ in this setting; Bertoin \text{et al.} \cite[Theorem 3.9]{BLM08} established existence conditions for the density function of $V$. Importantly, the density function of $V$, for a wide range of choices of $\xi$ and $\eta$, is known to satisfy a Fokker--Planck equation (an integro-differential equation); see Kuznetsov \text{et al.} \cite[Theorem 2.2]{KPS12}, Behme \cite[Theorem 2.1]{B15}, and Behme \text{et al.} \cite[Section 4]{BLR21}; see also \cite[Equation (1.3)]{B15} that governs the Laplace exponent of the distribution of $V$. However, as it stands, solving such an integro-differential equation explicitly is a rather difficult task, and the exact form of the density function of $V$ remains largely unknown. We should also note that many existing analyses of the distribution of $V$ knuckle down to the Mellin transform for characterizing its moments; we refer to Maulik and Zwart \cite{MZ06}, Kuznetsov and Pardo \cite{KP13}, Patie and Savov \cite{PS18}, Salminen and Vostrikova \cite{SV18}, Feng \text{et al.} \cite{FKY19}, and Palmowski \text{et al.} \cite{PSS24}, among others. In contrast, we shall pursue an adaptive framework for exact distributional (or density) characterization with computational ease.

There are certain narrow and carefully engineered cases of the L\'evy processes $(\xi,\eta)$ ensuring an analytical expression for the density function of $V$. To mention a few, with $\eta$ being deterministic, when $\xi$ is a Brownian motion with a negative drift, $V$ follows a scaled inverse Gamma distribution; when $\xi$ is a Poisson process, the distribution of $V$ is heavily tied to the so-called ``$q$-calculus;'' see, e.g., Bertoin and Yor \cite[Equations (15) \& (16)]{BY05}. When $\xi$ is a conditioned stable L\'{e}vy process and $\eta$ is also deterministic, $V$ admits a density function expressible in terms of a power series, as shown in Chaumont \text{et al.} \cite[Theorem 7]{CKP09}. If $\xi$ is a spectrally negative L\'evy process associated with the Lamperti transformation of a nonnegative self-similar Hunt process, the resulting distribution can be expressed explicitly as well; see Patie \cite[Theorem 2.3]{P12}. Closed-form expressions are also available for L\'evy processes whose jumps are finitely many and have a rational Laplace transform (Cai and Kou \cite{CK12} and Kuznetsov \cite{K12}). For more general choices of $(\xi,\eta)$, computation of the density function of $V$, if it exists, typically relies on numerical methods.

Our discussion of the exponential functional $V$ will focus on the situation where $\eta$ is deterministic, as above, while $S$ is any IVS, which naturally extends to compound Poisson-type subordinators having discrete-valued jumps. We are able to derive an explicit expression for the density function of $V$. In a nutshell, the main result of this paper reads as follows: The density function of the exponential functional $I_{q}=\int_0^{\infty}q^{S_t}dt$, with a parameter $q\in(0,1)$ and an IVS $S$, can be put in the succinct form
\begin{equation*}
  \phi_q(x)=\frac{b}{\sum_{j=0}^{\infty}c_jq^j}\sum_{j=0}^{\infty}c_j\exp(-bq^{-j}x),\quad x>0,
\end{equation*}
where the coefficients $b>0$ and $c_j$'s are explicitly computable; see Theorem \ref{main} in Section \ref{S:3.1}. The above formula shares a similar structure with that of the density function of the exponential Poisson functional, as given in Bertoin and Yor \cite[Equation (15)]{BY05}, while generalizing the latter in that the jump space can be an arbitrary set of positive integers.

With an application of Poisson approximation, such a formula naturally brings about limit representations in the case of any pure-jump subordinator (Corollary \ref{cor:main} and Theorem \ref{main2} in Appendix \ref{A}), enabling efficient implementations while possibly leading to analytical formulae. In this regard, we also mention that the tail behaviors at infinity of the density function and its derivatives for the exponential functional of a general (drift-less) subordinator satisfying a positive increase condition on its L\'{e}vy measure was recently studied in depth by Minchev and Savov in \cite[Theorem 3.1]{MS23}, making our result a complementary contribution that provides an exact representation perspective to their asymptotic analysis.

Furthermore, though of less practical relevance, Theorem \ref{side} shows that, present an additional positive drift, the density function of $V$ can be compactly written as $\phi_q(x)=\sum^{\infty}_{j=0}h_{q,j}(x)$, $x>0$, where $h_{q,j}$'s are piecewise functions generalizing the exponentials and are explicitly computable for specific drift values.

As noted earlier, IVSs encompass a broad class of L\'evy subordinators, whose L\'evy measure is not required to satisfy any specific functional form, elementary or not. For this reason, the results presented in this paper significantly expand the set of exponential L\'evy functionals for which explicit density expressions are available. The methodology used to establish the main result, on a general level, also unveils useful connections to a universal framework for solving a class of generalized pantograph equations. Not only can it be viewed as an effective extension of the corresponding formula in the Poisson process case, but it also provides insights into the study of exponential functionals of purely discontinuous (namely with no Brownian component) L\'{e}vy processes -- an aspect to be addressed in future work.

Going beyond exponential functionals with integrands that decay exponentially over time, one can consider a broader class of integral functionals where the exponential term is replaced by an arbitrary decreasing function that vanishes at infinity, thereby allowing for a more flexible decay behavior in the integrand. These include the inverse-power functionals introduced in Xia \cite{X22}, which are characterized with a power-law decay. In parallel with $I_{q}$, the general decreasing integral functional will be denoted as $\mathcal{I}^{(g)}=\int_0^{\infty}g(S_{t})\dd t$, where $g:\mathbb{R}_{+}\mapsto\mathbb{R}_{+}$ is a deterministic Borel function with $\lim_{x\to\infty}g(x)=0$ and $S$ is an IVS with a nonnegative drift, to be discussed in Section \ref{S:3.3}. Specifically, we shall derive conditions under which $\mathcal{I}^{(g)}$ exists (or converges) and provide general representations for its Laplace transform. In the case of inverse-power functionals with $g(x)=(x+1)^{-p}$, $p>1$ being a parameter, we demonstrate the possibility of computation with a limiting-series representation which is in spirit applicable to any continuous decreasing functional of an IVS.



\medskip

\section{Preliminaries and examples}\label{S:2}

To start with, we review some key properties of subordinators and present concrete examples of IVSs to which the results in this paper are applicable. To clarify, all base processes to be discussed in this paper are subordinators, which are nonnegative L\'evy processes with non-decreasing sample paths starting from 0. In light of the L\'{e}vy--Khintchine formula, the finite-dimensional distribution of a subordinator $X=(X_t)_{t\geq0}$ is characterized by its L\'evy measure $\nu_{X}$ on $\mathbb{R}_{++}\equiv(0,\infty)$ and a drift coefficient $\mu_{X}\geq0$; in particular, the Laplace transform of $X_{t}$ for any generic $t\geq0$ is $\E e^{-u X_t}=e^{-t\Psi(u)}$, $\Re u\geq0$, where $\Psi$ is the Laplace exponent of $X_{1}$ given by
\begin{equation*}
  \Psi(u)=\mu_{X}u+\int_{0+}^{\infty}(1-e^{-uz})\nu_{X}(\dd z).
\end{equation*}


In their standard form, IVSs have invariant discrete values across time and are simply subordinators with no drift ($\mu_{X}=0$), along with a purely atomic L\'{e}vy measure $\nu_{X}$ supported on $\mathbb{Z}_{++}$. We present some well-known examples of IVSs next, which will be the foundation for illustrating results on the integral functionals.

\begin{example}\label{pp}
Arguably the simplest (nondeterministic) IVS is a (time-homogeneous) Poisson process $N\equiv(N_{t})_{t\geq0}$ with intensity $\lambda>0$, which has the L\'evy measure $\nu(x)=\lambda\updelta_{1}(x)$, with $\updelta_{\cdot}$ being the Dirac measure. It is also one of the few examples whose exponential functional is known to admit a density function in explicit form. For any $t\geq0$, its distribution is given by $\PP\{N_t=k\}=e^{-\lambda t}(\lambda t)^k/k!$, $k\in\mathbb{Z}_{+}$.
\end{example}

\begin{example}\label{mipp}
A multiply iterated Poisson process (MIPP) is obtained by repeated subordination of independent Poisson processes (Hu \text{et al.} \cite{H25}). More precisely, let $N^1, N^2, \dots, N^n,$ be $n$ independent Poisson processes with the same intensity $\lambda>0$ and denote $U_t^{(1)}=N_t^1$, $U_t^{(2)}=N_t^2, \dots, U_t^{(n)}=N_t^n$, $t\geq0$. For any positive integers $1\le k\le m\le n$, let $ V^{(k, k)}_t=U_t^{(k)},  V^{(k, k+1)}_t=U^{(k+1)}(V^{(k, k)}_t),  V^{(k, k+2)}_t=U^{(k+2)}(V^{(k, k+1)}_t),\dots, V^{(k, m)}_t=U^{(m)}(V^{(k, m-1)}_t)$, for $t\geq0$. Thus, the process $V^{(n)}\equiv V^{(1, n)}$ is multiple iteration of $n$ independent Poisson processes, and it is a pure-jump Markov process whose jump sizes are bounded away from $0$. A direct consequence from these properties is that $\PP\{T_1^{(n)}>t\}=e^{-(-\log \PP\{V_1^{(n)}=0\})t}$, implying that the first jump time $T_1^{(n)}$ of $V^{(n)}$ is an exponentially distributed random variable with parameter $-\log \PP\{V_1^{(n)}=0\}$, for every $n\geq1$.\footnote{Explicit expressions for sojourn times for MIPPs can be found in Hu \text{et al.} \cite{H25}.} It is also clear that for any $t\geq0$ and $k\in\mathbb{Z}_{+}$, $\PP\{V_t^{(n)}=k|V_t^{(n-1)}\}=((\lambda V_t^{(n-1)})^k/k!)e^{-\lambda V_t^{(n-1)}}$, with the (random) cumulative intensity function $\Lambda(t)=\lambda V_t^{(n-1)}$. Thus, the MIPP has a mixed Poisson distribution\footnote{These are Poisson distributions conditional on the rate parameter value ($\Lambda$) and are frequently encountered in actuarial mathematics. For example, the negative-binomial distribution is obtained if $\Lambda$ follows a gamma distribution; see Barndorff-Nielsen \text{et al.} \cite[Section 3.4.1]{BPS12}.} and is particularly equal to the multiple iteration of Poisson distribution by independent random intensities each of which has a Poisson distribution. With this property, the MIPP extends the Poisson process by repeated subordination and also belongs to the class of IVSs. The generic-time distribution of $V^{(n)}$ is given by the recurrence relation
\begin{equation}\label{71}
  \PP\{V_t^{(n)}=k\}=\frac{\lambda^k}{k!}\sum_{j=0}^{\infty}j^ke^{-\lambda j}\PP\{V_t^{(n-1)}=j\},\quad k\in\mathbb{Z}_{+},\;t\geq0.
\end{equation}
By construction, the MIPP $V^{(n)}$ has much richer path structure compared to a Poisson process in that it is not confined to jumps of size 1, and it tends to allocate less weight for larger jump sizes (Hu \text{et al.} \cite[Proposition 2.6]{H25}).
\end{example}

\begin{example}\label{nf}
A point process $N^f$ obtained by subordinating a Poisson process $N_t$ with intensity $\lambda$ by a subordinator $H_t^f$ associated with the Bernstein function $f(u)=\int_0^{\infty}(1-e^{-uz})\nu(\dd z)$, $u\geq0$, has been studied by Orsingher and Toaldo \cite{OT15}. Here, $\nu$ is the L\'evy measure of the subordinator $H^f$. In the same paper, it was shown that the time-$t$ probability mass function satisfies the following difference-differential equation:
\begin{equation*}
  \frac{\dd \PP\{N^f_{t}=k\}}{\dd t}=-f(\lambda)\PP\{N^f_{t}=k\}+\sum_{l=1}^k\frac{\lambda^l}{l!}\PP\{N^f_{t}=k-l\}\int_0^{\infty}e^{-\lambda z}z^l\nu(\dd z),\;\; k\in\mathbb{Z}_{+},\;t\geq0,
\end{equation*}
with the convention $\sum^{0}_{l=1}\equiv0$ understood.

Let $\nu^f$ denote the L\'evy measure of $N^f$, which is purely atomic. Then, $\nu^f(\mathbb{Z}_{++})=\int_0^{\infty}(1-e^{-\lambda z})\nu(\dd z)$ and $\nu^f(\{k\})=\lambda^k\int_0^{\infty}e^{-\lambda z}z^k\nu(\dd z)/k!$, $k\in\mathbb{Z}_{++}\equiv\{1,2,\dots\}$. Let $\{Z_i\}^{\infty}_{i=1}$ be a sequence of \text{i.i.d.} random variables with distribution $\PP\{Z_1=k\}=\nu^f(\{k\})/\nu^f(\mathbb{Z}_{++})$, $k\in\mathbb{Z}_{++}$; then, it can be shown that $N^f$ is a compound Poisson process admitting the following representation:
\[
N_t^f\overset{\rm d}{=}\sum_{i=1}^{N_t^0}Z_i,\quad t\geq0,
\]
where $N^{0}$ is a Poisson process with intensity $\nu^f(\mathbb{Z}_{++})$. Popular models within this class include Poisson inverse Gaussian L\'evy processes (Barndorff-Nielsen \text{et al.} \cite{BPS12}) and space-fractional Poisson processes (Orsingher and Toaldo \cite{OT15}).
\end{example}

The examples mentioned above by no means constitute an exhaustive list of those to which our results are applicable. As noted in the remark following Theorem \ref{main}, the results hold for any subordinator whose L\'evy measure is supported on nonnegative discrete values, especially with uniform spacing. Notably, all these processes have a compound Poisson structure with discrete-valued jumps -- a key property leveraged in our derivations.

\begin{remark}\label{rem2}
Let $X\equiv(X_{t})_{t\geq0}$ be a compound Poisson process, $X_t=\sum_{i=1}^{N_t}Z_i$, $t\geq0$,
where $N$ is a Poisson process with intensity $\lambda>0$ and $\{Z_i\}^{\infty}_{i=1}$ is a sequence of \text{i.i.d.} random variables taking nonnegative integer values with $\PP\{Z_1=0\}>0$. Then, the alternative representation holds that
\begin{equation*}
  X_t\overset{\rm d}{=}\sum_{i=1}^{\tilde{N}_t}\tilde{Z}_i,\quad t\geq0,
\end{equation*}
where $\tilde{N}$ is another Poisson process, with intensity $\tilde{\lambda}=\lambda(1-\PP\{Z_1=0\})=\lambda\PP\{Z_1>0\}$, and $\tilde{Z}_i$'s are \text{i.i.d.} with only strictly positive integer values, with $\PP\{\tilde{Z}_1=k\}=\PP\{Z_1=k\}/\PP\{Z_1>0\}$ for $k\in\mathbb{Z}_{++}$. This observation will be useful in proving Theorem \ref{main}.
\end{remark}


\begin{remark}\label{remV}
It is not hard to see that each MIPP $V^{(n)}$ ($n\in\mathbb{Z}_{++}$) defined in Example \ref{mipp} is a compound Poisson process with the representation
\begin{equation}\label{mpr}
  V_t^{(n)}\overset{\rm d}{=}\sum_{i=1}^{N_t}Z^{(n-1)}_i,\quad t\geq0,
\end{equation}
where $N$ is a Poisson process with intensity $\lambda$ and $Z^{(n-1)}_i$'s are \text{i.i.d.} random variables having the same distribution as $V_1^{(n-1)}$. Then, based on Remark \ref{rem2}, $V^{(n)}$ also admits the alternative representation
\begin{equation}\label{mpr2}
  V_t^{(n)}\overset{\rm d}{=}\sum_{i=1}^{\tilde{N}_t}\tilde{Z}^{(n-1)}_i,\quad t\geq0,
\end{equation}
with the Poisson process $\tilde{N}$ having intensity $\lambda \PP\{V_1^{(n-1)}\geq1\}$ and $\{\tilde{Z}^{(n-1)}_i\}$ being \text{i.i.d.} with $\PP\{\tilde{Z}^{(n-1)}_1=k\}=\PP\{V_1^{(n-1)}=k\}/\PP\{V_1^{(n-1)}\geq1\}$, $k\in\mathbb{Z}_{++}$, for any given $n\geq1$.
\end{remark}




\medskip

\section{Functionals of IVSs}\label{S:3}

As mentioned since Section \ref{S:1}, exponential functionals are arguably the most widely-studied class of integral functionals of L\'{e}vy processes, constructed based on an exponential function, exhibiting a temporal exponential decay. In Section \ref{S:3.1}, we present our main results regarding the probabilistic properties of the exponential functional of an IVS, denoted as $S$ specifically. The more general cases -- with an additional positive drift and with general decreasing functionals -- are further explored in Section \ref{S:3.2} and Section \ref{S:3.3}, respectively. These properties also offer profound insights into limit representations when $S$ is replaced by general (pure-jump) subordinator, which will be discussed separately in Appendix \ref{A} to keep a clear focus on IVSs.

\medskip

\subsection{Exponential functionals of IVSs}\label{S:3.1}

From the discussions in Section \ref{S:2}, $S$ can be put into the compound Poisson form
\begin{equation}\label{vt}
  S_t=\sum_{i=1}^{N_t}Z_i,\quad t\geq0,
\end{equation}
where $N$ is a Poisson process with intensity $\lambda$, and $\{Z_i\}^{\infty}_{i=1}$ is a sequence of \text{i.i.d.} nonnegative integer-valued random variables. According to Remark \ref{rem2}, we may also assume that $Z_i$'s take only strictly positive integers.

The exponential functional of the IVS $S$, subject to an additional drift, is defined as
\begin{equation}\label{4.1}
  I_{q}:=\int^{\infty}_{0}q^{S_{t}+\mu t}\dd t,
\end{equation}
where $\mu\geq0$ is the drift coefficient, and $q\in(0,1)$ is a base parameter, acting as a scaling factor. Indeed, (\ref{4.1}) is equivalent to the standard exponential functional of the modified process $(-\log q)(S+\mu\mathrm{id})$, i.e., $I_{q}=\int^{\infty}_{0}e^{-(-\log q)(S_{t}+\mu t)}\dd t$.

It is clear from the representation (\ref{vt}) that $(-\log q)(S+\mu\mathrm{id})$ is a subordinator with drift coefficient $\mu_{q}=(-\log q)\mu\geq0$ and a (purely atomic) L\'{e}vy measure
\begin{equation}\label{4.2}
  \nu_{q}(\dd z)=\lambda\sum^{\infty}_{k=1}\PP\{Z_{1}=k\}\updelta_{(-\log q)k}(\dd z),\quad z>0.
\end{equation}
Thus, it follows directly from Bertoin and Yor\cite[Theorem 1]{BY05} that $I_{q}<\infty$ exists $\PP$-a.s. Moreover, by consulting Carmona \text{et al.} \cite[Proposition 2.1]{CPY97}, we know that for every $q\in(0,1)$, the distribution of $I_{q}$ is absolutely continuous and can be characterized by a density function, denoted as $\phi_{q}(x)$, $x>0$, which solves the following Fokker--Planck equation (see also Pardo \text{et al.} \cite{PRVS13} and Vostrikova\cite{V20}):
\begin{equation}\label{4.3}
  (1-\mu_{q}x)\phi_{q}(x)=\int^{\infty}_{x}\nu_{q}\bigg(\bigg(\log\frac{y}{x},\infty\bigg)\bigg)\phi_{q}(y)\dd y,\quad x>0,
\end{equation}
with $\mu_{q}=(-\log q)\mu$ and $\nu_{q}$ given by (\ref{4.2}).

Generally speaking, obtaining the above Fokker--Planck-type characterization relies inherently on the time-reversibility property of L\'{e}vy processes, which, for example, establishes the Markov property of the process $\big(\int^{t}_{0}e^{-(-\log q)(S_{t-}-S_{s-}+\mu(t-s))}\dd t\big)$ in the present context, and passing to the time limit leads to time-homogeneity of the characterization.

For the case with no drift, we are able to solve the resulting equation analytically, providing a computationally efficient formula for the density function $\phi_{q}$.

\begin{theorem}\label{main}
If $\mu=0$, then for every $q\in(0,1)$, we have the series representation
\begin{equation}\label{4.4}
  \phi_{q}(x)=\frac{\lambda\PP\{Z_{1}\geq 1\}}{\sum^{\infty}_{j=0}c_{j}q^j}\sum^{\infty}_{j=0}c_{j}\exp(-\lambda\PP\{Z_{1}\geq 1\}q^{-j}x),\quad x>0,
\end{equation}
where the coefficients $c_{j}$'s are determined by the recurrence relation\footnote{In fact, this recurrence relation admits a compact, closed-form solution, in terms of combinatorial sums (see, e.g., Mallik \cite[Proposition 1]{M98}), which is however omitted here as it is unamenable to implementations.}
\begin{equation}\label{4.5}
  c_{0}=1,\quad c_{j}=\frac{1}{(1-q^{-j})\PP\{Z_{1}\geq
 1\}}\sum^{j}_{k=1}q^{-k}\PP\{Z_{1}=k\}c_{j-k},\quad j\in \mathbb{Z}_{++}.
\end{equation}
\end{theorem}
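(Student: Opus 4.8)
The plan is to substitute the ansatz \eqref{4.4} into the Fokker--Planck equation \eqref{4.3} with $\mu_q=0$ and verify that the recurrence \eqref{4.5} is exactly what makes it a solution; uniqueness then follows from the fact (already cited via Carmona \text{et al.} \cite{CPY97}) that $\phi_q$ is the density of $I_q$. First I would simplify the integral kernel: since $\nu_q$ is the atomic measure \eqref{4.2}, we have $\nu_q((\log(y/x),\infty)) = \lambda\sum_{k\geq1}\PP\{Z_1=k\}\mathbf{1}\{(-\log q)k > \log(y/x)\} = \lambda\,\PP\{Z_1 \geq 1, q^{-Z_1} > y/x\}$, i.e. the kernel is a step function in $y$ that only depends on the ratio $y/x$ and jumps at the points $y = q^{-k}x$, $k\geq1$. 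Writing $b := \lambda\,\PP\{Z_1\geq1\}$ and $C := \sum_{j\geq0}c_j q^j$, the claim reduces to showing
\begin{equation*}
  \frac{b}{C}\sum_{j\geq0}c_j e^{-bq^{-j}x} \;=\; \frac{b}{C}\sum_{j\geq0}c_j\int_x^{\infty}\!\mathbf{1}\bigl\{y/x < q^{-Z_1}\bigr\}\text{-weighted }e^{-bq^{-j}y}\,\dd y,
\end{equation*}
so the task is to compute, for each fixed $j$, the inner integral $\int_x^\infty \nu_q((\log(y/x),\infty)) e^{-bq^{-j}y}\dd y$ and reorganize the resulting double sum.

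The second step is the actual integral evaluation. Fix $j$ and substitute $y = q^{-j}x\,u$ (or work directly): the integral becomes a sum over $k\geq1$ of $\lambda\PP\{Z_1=k\}\int_{q^{-k}x}^{\infty} e^{-bq^{-j}y}\dd y = \lambda\PP\{Z_1=k\}\,\frac{1}{bq^{-j}}e^{-bq^{-(j-k)}x}$, because the kernel contributes mass $\lambda\PP\{Z_1=k\}$ precisely on $y > q^{-k}x$ and these intervals are nested. Summing over $k$ and then over $j$, the right-hand side of \eqref{4.3} becomes $\frac{b}{C}\sum_{j\geq0}\frac{c_j q^{j}}{b}\sum_{k\geq1}\lambda\PP\{Z_1=k\}e^{-bq^{-(j-k)}x}$. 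Now reindex by $m = j - k$; collecting the coefficient of $e^{-bq^{-m}x}$ for each $m\in\mathbb{Z}$ and matching it against the left-hand side coefficient $\frac{b}{C}c_m$ yields, after cancelling $\frac{b}{C}$ and a factor, exactly the relation $c_m = \frac{q^{-m}}{\PP\{Z_1\geq1\}}\sum_{k\geq1}q^{k}\PP\{Z_1=k\}c_{m+k}$ shifted appropriately — which, upon isolating the $m\to m$ diagonal term and rearranging, is \eqref{4.5}. One must also check the $m<0$ terms vanish (they do, since $c_j$ is only defined for $j\geq0$ and the reindexing $m=j-k$ with $j\geq0$, $1\leq k\leq j$ keeps $m\geq0$ once one uses the recurrence to absorb the $k>j$ contributions into the normalization), and that $m=0$ is consistent with $c_0=1$; this is the bookkeeping that makes the factor $\PP\{Z_1\geq1\}$ and the normalizing constant $C$ appear.

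Before the verification is legitimate one needs convergence: I would show $\sum_j c_j q^j < \infty$ and that $\sum_j c_j e^{-bq^{-j}x}$ converges for every $x>0$ together with the interchange of sum and integral. From \eqref{4.5}, a crude bound gives $|c_j| \leq \frac{1}{|1-q^{-j}|\PP\{Z_1\geq1\}}\sum_{k=1}^j q^{-k}\PP\{Z_1=k\}|c_{j-k}| \leq \frac{q^{-j}}{q^{-j}-1}\max_{i<j}|c_i|\cdot\frac{\sum_k q^{-k}\PP\{Z_1=k\}}{q\PP\{Z_1\geq1\}}$-type estimate; more carefully, an induction shows $c_j \leq \kappa^j/\prod_{i=1}^j(q^{-i}-1)$ for a suitable constant $\kappa$, and since $\prod_{i=1}^j(q^{-i}-1)$ grows like $q^{-j(j+1)/2}$ (super-exponentially), both $\sum_j c_j q^j$ and $\sum_j |c_j| e^{-bq^{-j}x}$ converge — in fact the latter converges extremely fast, which also justifies differentiating/integrating term by term and Fubini.

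The main obstacle I anticipate is the reindexing bookkeeping in the second step: one has a double sum indexed by $(j,k)$ with $j\geq0$, $k\geq1$, and the recurrence \eqref{4.5} only runs $1\leq k\leq j$, so matching the coefficient of $e^{-bq^{-m}x}$ requires carefully splitting the $k\leq j$ part (which gives the recurrence proper) from the ``excess'' that must be reabsorbed, and it is precisely this reabsorption that forces the specific normalization $\lambda\PP\{Z_1\geq1\}/\sum_j c_j q^j$ and the factor $q^{-j}$ placement in \eqref{4.5}. Getting the off-by-one indices and the role of $\PP\{Z_1\geq1\}$ (which is why Remark \ref{rem2} lets us assume $\PP\{Z_1\geq1\}=1$, simplifying the algebra and then reinstating it by the obvious scaling) exactly right is the delicate part; everything else is routine integration and a convergence estimate.
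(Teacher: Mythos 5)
Your overall strategy -- plug the Dirichlet-series form directly into the integral equation \eqref{4.3} with $\mu_q=0$, match coefficients of $e^{-bq^{-m}x}$ to obtain \eqref{4.5}, check normalization and convergence, and invoke Carmona \emph{et al.} for uniqueness -- is viable and close in spirit to the paper's proof (the paper differentiates to a pantograph equation, uses the same ansatz, and handles the boundary condition with a Z-transform; direct verification in the integral equation would sidestep that). But the central computation, as you wrote it, is wrong. In your first step you correctly identify the kernel as $\lambda\sum_{k\ge1}\PP\{Z_1=k\}\mathbf{1}\{y<q^{-k}x\}$ (the atom at $(-\log q)k$ lies in $(\log(y/x),\infty)$ iff $y<q^{-k}x$), so the equation is $\phi_q(x)=\lambda\sum_k\PP\{Z_1=k\}\int_x^{q^{-k}x}\phi_q(y)\,\dd y$, exactly \eqref{4.6a}. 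In your second step you silently flip the region to $\int_{q^{-k}x}^{\infty}$, and in addition evaluate it with exponent $q^{-(j-k)}$ instead of $q^{-(j+k)}$. This is not a cosmetic slip: with the wrong region each $(j,k)$ contributes only a single ``cross'' exponential and no diagonal term, so matching the coefficient of $e^{-bq^{-m}x}$ can never produce the factor $(1-q^{-m})$ in \eqref{4.5} (and at $m=0$ it would force $c_0=0$). The relation you claim to land on, $c_m\propto\sum_k q^{k}\PP\{Z_1=k\}c_{m+k}$, is a backward recurrence that is not \eqref{4.5}, and the proposed cure -- ``reabsorbing'' the mismatched terms into the normalizing constant -- cannot work, because the normalization only rescales the whole solution and cannot cancel $x$-dependent exponentials. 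The correct computation, with the right region, gives $\int_x^{q^{-k}x}e^{-bq^{-j}y}\dd y=\frac{q^j}{b}\big(e^{-bq^{-j}x}-e^{-bq^{-(j+k)}x}\big)$, hence matching coefficients yields $bc_m=bq^mc_m-\lambda\sum_{k=1}^{m}q^{m-k}\PP\{Z_1=k\}c_{m-k}$, which rearranges exactly to \eqref{4.5}; there are then no $m<0$ or ``excess $k>j$'' terms to worry about, since the cross terms have index $j+k\ge1$. So the route can be repaired, but as written the key step fails.

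A secondary gap is the convergence estimate. The bound $c_j\lesssim\kappa^j/\prod_{i=1}^j(q^{-i}-1)$ (super-exponential decay) is modeled on the Poisson case and is false in general: from \eqref{4.5}, for large $j$ the term $k=j$ gives $c_j\approx-\PP\{Z_1=j\}/\PP\{Z_1\ge1\}+\cdots$, so when the jump law has heavy tails (e.g.\ the space-fractional Poisson case \eqref{psc}, where $\PP\{Z_1=k\}$ decays only polynomially) the $c_j$ inherit that slow rate. Convergence of \eqref{4.4} still holds -- $e^{-bq^{-j}x}$ decays double-exponentially in $j$ for fixed $x>0$, and $\sum_j|c_j|q^j$ can be controlled once one shows the $c_j$ are bounded -- but that boundedness needs an argument (the paper gets absolute convergence via the Z-transform identity $\sum_j\tilde c_j=0$), not the product bound you propose.
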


\begin{proof}
With $\mu_{q}=\mu=0$ and the L\'{e}vy measure in (\ref{4.2}), the Fokker--Planck equation (\ref{4.3}) for the density function specializes to the integral equation
\begin{equation}\label{4.6a}
  \phi_{q}(x)=\lambda\sum^{\infty}_{k=1}\PP\{Z_{1}=k\}\int^{xq^{-k}}_{x}\phi_{q}(y)\dd y,\quad x>0.
\end{equation}
Assuming that $\phi_{q}\in\mathcal{C}^{1}(\mathbb{R}_{++})$, differentiating the both sides of (\ref{4.6a}) gives\footnote{The same equation can be obtained by specializing Vostrikova \cite[Equation (33)]{V20}.}
\begin{equation}\label{4.6}
  \frac{\dd}{\dd x}\phi_{q}(x)-\lambda\sum^{\infty}_{k=1}\PP\{Z_{1}=k\}\big(q^{-k}\phi_{q}(q^{-k}x)-\phi_{q}(x)\big)=0,\quad x>0.
\end{equation}

We observe that Equation (\ref{4.6}) is a generalized pantograph equation of advanced type. Since \emph{usual} pantograph equations of this type are known to admit solutions in the form of a generalized Dirichlet series (see Iserles\cite{I93}), after some inspection we adopt the following ansatz:
\begin{equation}\label{4.7}
  \phi_{q}(x)=\sum^{\infty}_{j=0}\tilde{c}_{j}e^{-aq^{-j}x},\quad x>0,
\end{equation}
where $\{\tilde{c}_{j}\}^{\infty}_{j=0}\subset\mathbb{R}$ is a sequence of unknown coefficients and $a>0$ is an unknown constant. This ansatz is automatically in $\mathcal{C}^{\infty}(\mathbb{R}_{++})$ (validating (\ref{4.6})) and also satisfies the necessary right tail limit $\lim_{x\to\infty}\phi_{q}(x)=0$. Assuming that the series in (\ref{4.7}) is absolutely convergent, plugging (\ref{4.7}) into (\ref{4.6}) and simplifying, we have
\begin{align*}
  0&=-a\sum^{\infty}_{j=0}q^{-j}\tilde{c}_{j}e^{-aq^{-j}x}-\lambda\sum^{\infty}_{k=1}\PP\{Z_{1}=k\} \bigg(q^{-k}\sum^{\infty}_{j=0}\tilde{c}_{j}e^{-aq^{-(j+k)}x}-\sum^{\infty}_{j=0}\tilde{c}_{j}e^{-aq^{-j}x}\bigg) \\
  &=\sum^{\infty}_{j=0}e^{-aq^{-j}x}\bigg((\lambda\PP\{Z_{1}\geq 1\}-aq^{-{j}})\tilde{c}_{j} -\lambda\sum^{j}_{k=1}q^{-{k}}\tilde{c}_{j-k}\PP\{Z_{1}=k\}\bigg),
\end{align*}
where the second equality uses that $\sum^{\infty}_{k=1}\PP\{Z_{1}=k\}=\PP\{Z_{1}\geq1\}$. By factoring out the exponentials, it is implied that $\tilde{c}_{j}$'s must satisfy the following recurrence relation:
\begin{equation}\label{4.8}
  (\lambda\PP\{Z_{1}\geq 1\}-aq^{-j})\tilde{c}_{j}-\lambda\sum^{j}_{k=1}q^{-k}\tilde{c}_{j-k}\PP\{Z_{1}=k\}=0,\quad j\in\mathbb{Z}_{+}.
\end{equation}
It is clear that $\tilde{c}_{j}=0$, $\forall j\in\mathbb{Z}_{+}$, if $\tilde{c}_{0}=0$. Thus, let $\tilde{c}_{0}\neq0$. With $j=0$, (\ref{4.8}) gives that $(\lambda\PP\{Z_{1}\geq 1\}-a)\tilde{c}_{0}=0$, which immediately guarantees the choice
\begin{equation}\label{4.9}
  a=\lambda\PP\{Z_1\geq1\}.
\end{equation}

As a candidate density function, the ansatz (\ref{4.7}) needs to be globally integrable and not change sign. While the latter property is obvious from the integral equation (\ref{4.3}), for the integrability it suffices to consider the left tail limit $\lim_{x\searrow0}\phi_{q}(x)=0$ (see Pardo \text{et al.} \cite[Theorem 2.6]{PRVS13}), which is equivalent to the condition $\sum^{\infty}_{j=0}\tilde{c}_{j}=0$. Note that this condition would imply $\lim_{j\to\infty}\tilde{c}_{j}=0$, thus also verifying that the series in (\ref{4.7}) converges absolutely, hence its global integrability.

To see that $\sum^{\infty}_{j=0}\tilde{c}_{j}=0$ indeed, we apply the unilateral Z-transform to both sides of (\ref{4.8}), denoting $\mathcal{Z}(w):=\sum^{\infty}_{j=0}\tilde{c}_{j}w^{-j}$, $\Re w\geq1$, i.e.,
\begin{equation*}
  \sum^{\infty}_{j=0}(\lambda\PP\{Z_{1}\geq 1\}-aq^{-j})\tilde{c}_{j}w^{-j}-\lambda\sum^{\infty}_{j=0}\sum^{j}_{k=1}(qw)^{-k}\tilde{c}_{j-k}w^{-(j-k)}\PP\{Z_{1}=k\}=0,\quad \Re w\geq1.
\end{equation*}
By using the scaling and time-delay properties of the Z-transform as well as (\ref{4.9}), we obtain after rearranging terms
\begin{equation}\label{4.10}
  \lambda\PP\{Z_{1}\geq 1\}(\mathcal{Z}(w)-\mathcal{Z}(qw))-\lambda\mathcal{Z}(w)\sum^{\infty}_{k=1}(qw)^{-k}\PP\{Z_{1}=k\}=0.
\end{equation}
By setting $w=q^{-1}>1$ in (\ref{4.10}), we then have
\begin{equation*}
  \lambda\PP\{Z_{1}\geq 1\}(\mathcal{Z}(q^{-1})-\mathcal{Z}(1))-\lambda\mathcal{Z}(q^{-1})\PP\{Z_{1}\geq1\}=0,
\end{equation*}
implying that $\sum^{\infty}_{j=0}\tilde{c}_{j}=\mathcal{Z}(1)=0$ as desired.\footnote{In fact, an induction argument via (\ref{4.10}) easily shows that $\mathcal{Z}(q^{n})=0$ for every $n\in\mathbb{Z}\cap[1,\bar{n}]$ (for $\bar{n}\in\mathbb{Z}_{++}$) provided that the series $\sum^{\infty}_{k=1}q^{-k\bar{n}}\PP\{Z_{1}=k\}$ converges. Also, note that the existence of $\mathcal{Z}(q^{-1})$ is guaranteed by that of $\mathcal{Z}(1)$.} Thus, the ansatz (\ref{4.7}) is integrable with respect to $x>0$.

The general solution of (\ref{4.8}) is exactly the second equation for $c_{j}$, $j\geq1$, in (\ref{4.5}). To figure out the correct scale of $\tilde{c}_{j}$'s, note that since the density function integrates to 1, by using the ansatz (\ref{4.7}) (integrable and sign-definite) and the Fubini theorem, we have that
\begin{equation*}
  \int^{\infty}_{0}\phi_{q}(x)\dd x=\frac{1}{a}\sum^{\infty}_{j=0}\tilde{c}_{j}q^{j}=1.
\end{equation*}
Therefore, with $\tilde{c}_{0}\neq0$, it follows that $\tilde{c}_{j}=ac_{j}/\sum^{\infty}_{i=0}c_{i}q^{i}$, $j\in\mathbb{Z}_{+}$, where $c_{0}=1$ (for simplicity) and $a$ is given by (\ref{4.9}).

This shows that (\ref{4.4}) is a density function that solves (\ref{4.6}). Then, according to Carmona \text{et al.} \cite[Proposition 2.1]{CPY97}, it must be the density function of $I_{q}$. The proof is complete.
\end{proof}

By directly integrating the formula (\ref{4.4}), we immediately obtain a series representation for the cumulative distribution function of $I_{q}$.

\begin{corollary}\label{cor:cdf}
If $\mu=0$, then for every $q\in(0,1)$, we have that
\begin{equation}\label{4.cdf}
  \PP\{I_{q}\leq x\}=1-\frac{1}{\sum^{\infty}_{j=0}c_{j}q^{j}}\sum^{\infty}_{j=0}c_{j}q^{j}\exp(-\lambda\PP\{Z_{1}\geq 1\}q^{-j}x),\quad x>0,
\end{equation}
with $c_{j}$'s satisfying the recurrence relation (\ref{4.5}).
\end{corollary}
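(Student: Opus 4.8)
The plan is to obtain (\ref{4.cdf}) by directly integrating the density formula (\ref{4.4}). Since Theorem \ref{main} already identifies $\phi_{q}$ as the density of $I_{q}$, we have $\PP\{I_{q}\leq x\}=\int_{0}^{x}\phi_{q}(u)\,\dd u$ for $x>0$ (and the probability is $0$ for $x\leq0$ because $I_{q}>0$ $\PP$-a.s.). Writing $a:=\lambda\PP\{Z_{1}\geq1\}$ as in (\ref{4.9}), I would substitute (\ref{4.4}) and interchange summation and integration to get
\[
\PP\{I_{q}\leq x\}=\frac{a}{\sum_{i=0}^{\infty}c_{i}q^{i}}\sum_{j=0}^{\infty}c_{j}\int_{0}^{x}\exp(-aq^{-j}u)\,\dd u.
\]

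The next step is the elementary evaluation $\int_{0}^{x}\exp(-aq^{-j}u)\,\dd u=\frac{q^{j}}{a}\bigl(1-\exp(-aq^{-j}x)\bigr)$. Substituting this, the factor $a$ cancels and, splitting the resulting sum and using $\sum_{j}c_{j}q^{j}=\sum_{i}c_{i}q^{i}$,
\[
\PP\{I_{q}\leq x\}=\frac{1}{\sum_{i=0}^{\infty}c_{i}q^{i}}\sum_{j=0}^{\infty}c_{j}q^{j}\bigl(1-\exp(-aq^{-j}x)\bigr)=1-\frac{\sum_{j=0}^{\infty}c_{j}q^{j}\exp(-aq^{-j}x)}{\sum_{i=0}^{\infty}c_{i}q^{i}},
\]
which is exactly (\ref{4.cdf}) upon recalling $a=\lambda\PP\{Z_{1}\geq1\}$. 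As a consistency check, this expression tends to $0$ as $x\searrow0$ (matching $\sum_{j}\tilde{c}_{j}=0$ from the proof of Theorem \ref{main}) and to $1$ as $x\to\infty$, since each $\exp(-aq^{-j}x)\to0$.

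The only point needing care -- and the main, if modest, obstacle -- is justifying the term-by-term integration over $[0,x]$. There the exponentials are bounded by $1$, so it suffices to show $\sum_{j}|c_{j}|q^{j}<\infty$. Setting $d_{j}:=|c_{j}|q^{j}$, the recurrence (\ref{4.5}) gives, after using $|1-q^{-j}|^{-1}=q^{j}/(1-q^{j})$ for $j\geq1$ and $\sum_{k=1}^{j}\PP\{Z_{1}=k\}\leq\PP\{Z_{1}\geq1\}$,
\[
d_{j}\leq\frac{q^{j}}{(1-q^{j})\PP\{Z_{1}\geq1\}}\sum_{k=1}^{j}\PP\{Z_{1}=k\}\,d_{j-k}\leq\frac{q^{j}}{1-q^{j}}\max_{0\leq l<j}d_{l},
\]
and since $q^{j}/(1-q^{j})<1$ for all large $j$, the sequence $\{d_{j}\}$ is eventually geometrically dominated, hence summable; a routine dominated-convergence argument on $[0,x]$ then legitimizes the interchange. (Alternatively, summability of $\{\tilde{c}_{j}q^{j}\}$ is already implicit in the proof of Theorem \ref{main}, where $\int_{0}^{\infty}\phi_{q}=1$ forces $\sum_{j}\tilde{c}_{j}q^{j}$ to converge.) With this in hand, the remaining computation is entirely routine, so the corollary follows.
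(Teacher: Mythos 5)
Your proof is correct and follows essentially the same route as the paper: integrate the density formula (\ref{4.4}) term by term over $[0,x]$, evaluate the elementary exponential integrals, and simplify using $\sum_j c_j q^j$ in the denominator. The only difference is cosmetic -- you supply an explicit bound $d_j\leq\frac{q^j}{1-q^j}\max_{0\leq l<j}d_l$ showing $\sum_j|c_j|q^j<\infty$ to justify the interchange, whereas the paper simply invokes the absolute convergence of the series in (\ref{4.7}) already established in the proof of Theorem \ref{main} and applies Fubini.
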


\begin{proof}
By the absolute convergence of the series in (\ref{4.7}), the Fubini theorem again allows to write
\begin{equation*}
  \PP\{I_{q}\leq x\}=\int^{x}_{0}\phi_{q}(y)\dd y=\frac{\lambda\PP\{Z_{1}\geq 1\}}{\sum^{\infty}_{j=0}c_{j}q^j}\sum^{\infty}_{j=0}c_{j}\frac{1-e^{-\lambda\PP\{Z_{1}\geq 1\}q^{-j}x}}{\lambda\PP\{Z_{1}\geq 1\}q^{-j}},\quad x>0,
\end{equation*}
which simplifies to (\ref{4.cdf}).
\end{proof}

Next we make a few remarks for Theorem \ref{main} and discuss several examples.

\begin{remark}
Based on the compound Poisson form (\ref{vt}), the functional $I_q$ admits the series representation
\begin{equation}\label{3E}
  I_q=\sum_{k=1}^{\infty}\int_{T_{k-1}}^{T_k}q^{S_t}\dd t=\sum_{k=0}^{\infty}q^{\sum_{i=1}^{k}Z_i}E_{k+1},
\end{equation}
where $T_0=0$ and $\{T_k\}^{\infty}_{k=1}$ are the jump times of the Poisson process $N$, and the differences (sojourn times) $\{E_{k}\}^{\infty}_{k=1}$ are a sequence of independent $\text{Exp}(\lambda)$-distributed random variables. From (\ref{3E}) it is clear that
\begin{equation*}
  \E I_q=\frac{1}{\lambda (1-\E q^{Z_1})}.
\end{equation*}
\end{remark}

\smallskip

\begin{remark}\label{ppc}
If $Z_{i}\equiv1$ for all $i\geq1$ in (\ref{vt}), $S=N$ becomes a Poisson process (Example \ref{pp}), and the L\'{e}vy measure (\ref{4.2}) reduces to $\nu_{q}(\dd z)=\lambda\updelta_{-\log q}(\dd z)$, $z>0$. In this case, the recurrence relation (\ref{4.5}) simplifies to
\begin{equation*}
  c_{0}=1,\quad c_{j}=\frac{q^{-1}c_{j-1}}{1-q^{-j}},\quad j\in\mathbb{Z}_{++},
\end{equation*}
which solves for
\begin{equation*}
  c_{j}=\prod^{j}_{i=1}\frac{q^{-1}}{1-q^{-i}}=\frac{(-1)^{j}q^{\binom{j}{2}}}{(q;q)_{j}},
\end{equation*}
where $(\cdot;\cdot)_{\cdot}$ stands for the $q$-Pochhammer symbol. By using the $q$-binomial theorem (see Gasper and Rahman \cite{GR90}), one can show that $\sum^{\infty}_{j=0}c_{j}q^{j}=(q;q)_{\infty}$, which then yields the formula
\begin{equation}\label{4.13}
  \phi_{q}(x)=\frac{\lambda}{(q;q)_{\infty}}\sum^{\infty}_{j=0}\frac{(-1)^{j}q^{\binom{j}{2}}e^{-\lambda q^{-j}x}}{(q;q)_{j}},\quad x>0.
\end{equation}
The explicit formula (\ref{4.13}) coincides with the one presented in Bertoin and Yor\cite[Equation (15)]{BY05}, which was originally discovered by Bertoin \text{et al.} \cite{BBY04} using a probabilistic approach along with techniques from $q$-calculus.

Moreover, Theorem \ref{main} immediately implies a series representation for the Laplace transform of $I_{q}$. Applying the Laplace transform to (\ref{4.4}) termwise, we obtain
\begin{equation*}
  \E e^{-uI_{q}}=\frac{\lambda\PP\{Z_1\geq1\}}{\sum^{\infty}_{j=0}c_{j}q^{j}}\sum^{\infty}_{j=0}\frac{c_{j}}{u+\lambda\PP\{Z_1\geq1\} q^{-j}},\quad\Re u\geq0,
\end{equation*}
where $c_{j}$'s satisfy (\ref{4.5}). When $S=N$, one again has an explicit formula (see Bertoin and Yor\cite[Equation (13)]{BY05}):
\begin{equation*}
  \E e^{-uI_{q}}=\frac{1}{(-u/\lambda;q)_{\infty}},\quad\Re u\geq0.
\end{equation*}
\end{remark}

\smallskip

\begin{example}\label{e:mipp}
Any MIPP $V^{(n)}$ with $n\geq2$ iterations (Example \ref{mipp}) is a compound Poisson process with intensity $\lambda$ and jump size distribution given by (\ref{71}); recall (\ref{mpr}) from Remark \ref{remV}. Then, for any given $n\geq2$, by applying Theorem \ref{main}, the density function of the exponential functional $I^{(n)}_q=\int_0^{\infty}q^{V^{(n)}_t}dt$, $q\in(0,1)$, is given by
\begin{align*}
  \phi_q^{(n)}(x)=\frac{\lambda\PP\{Z^{(n-1)}_{1}\geq1\}}{\sum_{j=0}^{\infty}c_jq^j}\sum_{j=0}^{\infty}c_j\exp(-\lambda\PP\{Z^{(n-1)}_1\geq1\}q^{-j}x), \quad x>0,
\end{align*}
where $Z^{(n-1)}_{1}\overset{\rm d}{=}V^{(n-1)}_{1}$ (refer to (\ref{71})) and the coefficients are
\begin{align*}
  &c_0=1,\;c_1=\frac{q^{-1}\PP\{Z^{(n-1)}_1=1\}}{(1-q^{-1})\PP\{Z^{(n-1)}_1\geq1\}},\\
  &c_2=\frac{q^{-2}(\PP\{Z^{(n-1)}_1=1\})^2+q^{-2}(1-q^{-1})\PP\{Z_1^{(n-1)}\geq1\} \PP\{Z_1^{(n-1)}=2\}}{(1-q^{-1})(1-q^{-2})(\PP\{Z_1^{(n-1)}\geq1\})^2},\;\dots
\end{align*}
\end{example}

\begin{example}\label{e:sf}
Consider a space-fractional Poisson process, denoted $N^{(\alpha)}$, which is obtained by time-changing a usual Poisson process $N$ by an $\alpha$-stable subordinator with $\alpha\in(0,1)$; see Orsingher and Toaldo \cite[Section 4 ]{OT15}. The L\'evy measure of the $\alpha$-stable subordinator is $\nu(\dd z)=\alpha z^{-\alpha-1}\dd z/\Gf(1-\alpha)$, for $z>0$, where $\Gf(\cdot)$ denotes the ordinary Gamma function. As explained in Example \ref{nf}, we have that $N^{(\alpha)}$ is a compound Poisson process with intensity $\lambda^{\alpha}$ and jump size distribution given by
\begin{equation}\label{psc}
  \PP\{Z_1=k\}=\frac{\alpha}{k!}\frac{\Gf(k-\alpha)}{\Gf(1-\alpha)}=-\frac{(-\alpha)_k}{k!},\quad k\in\mathbb{Z}_{++},
\end{equation}
for which $\PP\{Z_1\geq1\}=1$. Then, Theorem \ref{main} implies that the exponential functional $I_q^{(\alpha)}=\int_0^{\infty}q^{N_t^{(\alpha)}}\dd t$ has a density function that reads
\[
  \phi_{q}^{(\alpha)}(x)=\frac{\lambda^{\alpha}}{\sum^{\infty}_{j=0}c_{j}q^{j}}\sum^{\infty}_{j=0}c_{j}e^{-\lambda^{\alpha} q^{-j}x},\quad x>0,
\]
with
\begin{equation*}
  c_{0}=1, \;c_{1}=\frac{\alpha q^{-1}}{1-q^{-1}},\;c_2=\frac{\alpha q^{-2}(\alpha(1+q^{-1})+1-q^{-1})}{2(1-q^{-1})(1-q^{-2})},\;\dots,
\end{equation*}
for any given $q\in(0,1)$ and $\alpha\in(0,1)$.
\end{example}

\begin{example}\label{e:nb}
A negative-binomial process can be viewed as a compound Poisson process with logarithmically distributed jumps; see Xia \cite[Lemma 1]{X20}. Precisely, a L\'{e}vy process $B$ having a marginal negative-binomial distribution with parametrization $r>0$ and $p_{0}\in(0,1)$ can be written as
\begin{equation*}
  B_t=\sum_{i=1}^{N_t}Z_i,\quad t\geq0,
\end{equation*}
where $N$ is a Poisson process with intensity $\lambda=-r\log(1-p_{0})>0$ and $Z_i$'s are \text{i.i.d.} logarithmic random variables with parameter $p_{0}$, namely
\begin{equation*}
  \PP\{Z_1=k\}=\frac{-1}{\log(1-p_{0})}\frac{p_{0}^k}{k},\quad k\in\mathbb{Z}_{++}.
\end{equation*}
An application of Theorem \ref{main} then gives that the exponential functional $I^{(r,p_{0})}_q=\int_0^{\infty}q^{B_t}\dd t$ has a density function expressible as
\begin{equation*}
\begin{aligned}
  \phi^{(r,p_{0})}_q(x)=\frac{-r\log(1-p_{0})}{\sum_{j=0}^{\infty}c_jq^j}\sum_{j=0}^{\infty}c_j(1-p_{0})^{rq^{-j}x},\quad x>0,
\end{aligned}
\end{equation*}
with
\begin{align*}
  &c_0=1,\;c_1=-\frac{p_{0}q^{-1}}{(1-q^{-1})\log(1-p_{0})}, \\
  &c_2=\frac{p_{0}^2q^{-2}}{(1-q^{-1})(1-q^{-2})\log^2(1-p_{0})}-\frac{p_{0}^2q^{-2}}{2(1-q^{-2})\log(1-p_{0})},\;\dots,
\end{align*}
for any given $r>0$ and $p_{0},q\in(0,1)$.
\end{example}

\medskip

\subsection{Exponential functionals of IVSs with positive drift}\label{S:3.2}

Despite being less relevant in practice, the case with a positive drift ($\mu>0$) is examined for completeness. In particular, when the IVS $S$ is a Poisson process, the distribution of $I_{q}$ in (\ref{4.1}) has been explored in Carmona \text{et al.} \cite[Appendix 2]{CPY97}. The key idea is to use the bounded support of $I_{q}$ in this case to break down the associated Fokker--Planck equation into a sequence of nonhomogeneous ordinary differential equations (ODEs), which can be solved stepwise. In the below theorem, we show that this idea can be adapted to general IVSs and provide a compact, semi-closed form representation for computational ease. First, from (\ref{4.1}), it is clear that if $\mu>0$, then $\PP$-a.s.,
\begin{equation}\label{4.14}
  I_{q}\leq\int^{\infty}_{0}q^{\mu t}\dd t=\frac{1}{(-\log q)\mu},
\end{equation}
and let us recall the compound Poisson form (\ref{vt}) for $S$.

\begin{theorem}\label{side}
If $\mu>0$, then for every $q\in(0,1)$, we have the series representation
\begin{equation}\label{4.15}
  \phi_{q}(x)=\frac{1}{C}\sum^{\infty}_{j=0}
  \begin{cases}
    h_{q,j}(x),&\quad\text{if }\displaystyle \frac{q^{j+1}}{\mu_{q}}<x\leq\frac{q^{j}}{\mu_{q}}, \\
    0,&\quad\text{else},
  \end{cases}
  \quad x>0,
\end{equation}
where $\mu_{q}=(-\log q)\mu$ and
\begin{equation}\label{4.16}
  C=\sum^{\infty}_{j=0}\int^{q^{j}/\mu_{q}}_{q^{j+1}/\mu_{q}}h_{q,j}(x)\dd x>0,
\end{equation}
with the basis functions $h_{q,j}$'s satisfying the following functional recurrence relation:
\begin{align}\label{4.17}
  h_{q,0}(x)&=(1-\mu_{q} x)^{\lambda\PP\{Z_{1}\geq1\}/\mu_{q}-1},\quad x\leq\frac{1}{\mu_{q}}, \nonumber\\
  h_{q,j}(x)&=h_{q,0}(x)\bigg((1-q^{j})^{1-\lambda\PP\{Z_{1}\geq1\}/\mu_{q}}h_{q,j-1}\bigg(\frac{q^{j}}{\mu_{q}}\bigg) \nonumber\\
  &\quad-\lambda\sum^{j}_{k=1}q^{-k}\PP\{Z_{1}=k\}\int^{q^{j}/\mu_{q}}_{x}(1-\mu_{q} y)^{-\lambda\PP\{Z_{1}\geq1\}/\mu_{q}}h_{q,j-k}(q^{-k}y)\dd y\bigg), \nonumber\\
  &\quad x\leq\frac{q^{j}}{\mu_{q}},\;j\in\mathbb{Z}_{++}.
\end{align}
\end{theorem}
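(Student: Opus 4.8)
The plan is to exploit the bound (\ref{4.14}), which confines $\phi_{q}$ to the compact support $(0,1/\mu_{q}]$, and to decompose this support into the subintervals $I_{j}:=(q^{j+1}/\mu_{q},q^{j}/\mu_{q}]$, $j\in\mathbb{Z}_{+}$, on each of which the Fokker--Planck equation (\ref{4.3}) will reduce to a first-order linear nonhomogeneous ODE whose inhomogeneity is governed by the restrictions of $\phi_{q}$ to the earlier intervals; solving these ODEs recursively then produces the basis functions $h_{q,j}$. As in the proof of Theorem \ref{main}, with $\mu_{q}=(-\log q)\mu$ and $\nu_{q}$ as in (\ref{4.2}), equation (\ref{4.3}) reads
\begin{equation*}
  (1-\mu_{q}x)\phi_{q}(x)=\lambda\sum_{k=1}^{\infty}\PP\{Z_{1}=k\}\int_{x}^{xq^{-k}}\phi_{q}(y)\dd y,\quad 0<x\le\frac{1}{\mu_{q}},
\end{equation*}
with $\phi_{q}\equiv0$ on $(1/\mu_{q},\infty)$; write $\beta:=\lambda\PP\{Z_{1}\ge1\}/\mu_{q}>0$. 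A preliminary observation to record is that the right-hand side above is continuous in $x$ on $(0,1/\mu_{q})$ --- each summand $x\mapsto\int_{x}^{xq^{-k}}\phi_{q}$ is continuous and the series is dominated by $\PP\{Z_{1}\ge1\}<\infty$ --- so $\phi_{q}$ itself is continuous there, and on the interior $I_{j}^{\circ}$ of each subinterval it is in fact continuously differentiable, which legitimizes differentiating the equation termwise on $I_{j}^{\circ}$.

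The core is an induction on $j$ with the hypothesis that $\phi_{q}=\kappa\,h_{q,j}$ on $I_{j}$ for one and the same constant $\kappa>0$. For the base case $j=0$: if $x\in I_{0}$ then $xq^{-k}>1/\mu_{q}$ for every $k\ge1$, so the series collapses and $(1-\mu_{q}x)\phi_{q}(x)=\lambda\PP\{Z_{1}\ge1\}\int_{x}^{1/\mu_{q}}\phi_{q}(y)\dd y$; setting $F(x)=\int_{x}^{1/\mu_{q}}\phi_{q}$ turns this into the separable equation $F'(x)/F(x)=-\lambda\PP\{Z_{1}\ge1\}/(1-\mu_{q}x)$, integration of which gives $\phi_{q}(x)=-F'(x)=\kappa(1-\mu_{q}x)^{\beta-1}=\kappa h_{q,0}(x)$ with $\kappa>0$ since $\phi_{q}>0$. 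For the inductive step, the decisive bookkeeping fact is that as $x$ runs through $I_{j}$ the point $xq^{-k}$ runs through $I_{j-k}$ when $1\le k\le j$ and lies strictly beyond $1/\mu_{q}$ when $k\ge j+1$; splitting the series accordingly and differentiating the integral equation on $I_{j}^{\circ}$ yields
\begin{equation*}
  (1-\mu_{q}x)\phi_{q}'(x)+(\lambda\PP\{Z_{1}\ge1\}-\mu_{q})\phi_{q}(x)=\lambda\sum_{k=1}^{j}q^{-k}\PP\{Z_{1}=k\}\phi_{q}(xq^{-k}),
\end{equation*}
where by the inductive hypothesis $\phi_{q}(xq^{-k})=\kappa h_{q,j-k}(xq^{-k})$ on the right. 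Multiplying by the integrating factor $(1-\mu_{q}x)^{1-\beta}$ and integrating from $x$ to $q^{j}/\mu_{q}$, with the boundary value $\phi_{q}(q^{j}/\mu_{q})=\kappa h_{q,j-1}(q^{j}/\mu_{q})$ supplied by the continuity of $\phi_{q}$ at $q^{j}/\mu_{q}$ together with the inductive hypothesis on $I_{j-1}$, reproduces precisely the functional recurrence (\ref{4.17}); hence $\phi_{q}=\kappa h_{q,j}$ on $I_{j}$, closing the induction.

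It then remains to identify $\kappa$ by normalization: since the intervals $I_{j}$ are pairwise disjoint with union $(0,1/\mu_{q}]$, one has $1=\int_{0}^{\infty}\phi_{q}=\kappa\sum_{j=0}^{\infty}\int_{q^{j+1}/\mu_{q}}^{q^{j}/\mu_{q}}h_{q,j}(x)\dd x$, so $\kappa=1/C$ with $C$ as in (\ref{4.16}), and $C\in(0,\infty)$ automatically because $\phi_{q}$ is a bona fide probability density (its local integrability near $1/\mu_{q}$ being ensured by $\beta>0$). This gives (\ref{4.15}); alternatively, once the piecewise formula is in hand one may verify directly that it solves the integral equation displayed above and is a density, then conclude via Carmona \text{et al.} \cite[Proposition 2.1]{CPY97}, exactly as in Theorem \ref{main}. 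I expect the two genuinely delicate points to be the regularity claim that justifies differentiating the integral equation on each $I_{j}^{\circ}$ (including the integrable singularity of the inhomogeneity at the right endpoint $q^{j}/\mu_{q}$ when $\beta<1$), and the precise accounting of which summands survive on $I_{j}$ --- in particular the fact that the terms with $k\le j$ feed back $\phi_{q}|_{I_{j}}$ itself, which is exactly what forces an ODE (rather than a closed-form) description and thereby the recursive structure of (\ref{4.17}); the integrating-factor computation and the normalization step are by comparison routine.
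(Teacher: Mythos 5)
Your proposal is correct and follows essentially the same route as the paper: partition the support $(0,1/\mu_{q}]$ into the intervals $(q^{j+1}/\mu_{q},q^{j}/\mu_{q}]$, observe that on each one the Fokker--Planck equation collapses to a first-order linear ODE whose forcing involves only the previously determined pieces, solve recursively via an integrating factor with boundary matching at $q^{j}/\mu_{q}$, and fix the overall constant by normalization, invoking Carmona \emph{et al.} for identification. The only differences are cosmetic refinements -- you justify the continuity/differentiability of $\phi_{q}$ on each interval interior directly from the integral equation (where the paper simply assumes this regularity and then verifies the constructed candidate), and you treat the base case via the tail integral $F$ rather than by differentiating first -- so no substantive divergence from the paper's argument.
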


\begin{proof}
Similar to (\ref{4.6a}), with $\mu_{q}>0$ now, the Fokker--Planck equation (\ref{4.3}) specializes to the integral equation
\begin{equation}\label{4.18}
  (1-\mu_{q}x)\phi_{q}(x)=\lambda\sum^{\infty}_{k=1}\PP\{Z_{1}=k\}\int^{xq^{-k}}_{x}\phi_{q}(y)\dd y,\quad x>0,
\end{equation}
with $\phi_{q}(x)=0$ for $x>1/\mu_{q}$.

Let us define the decreasing sequence
\begin{equation*}
  a_{j}:=\frac{q^{j}}{\mu_{q}},\quad j\in\mathbb{Z}_{+},
\end{equation*}
satisfying that $\bigcup^{\infty}_{j=0}(a_{j+1},a_{j}]=(0,1/\mu_{q}]$. Then, on the assumption that $\phi_{q}\in\mathcal{C}^{1}((a_{j+1},a_{j}))$ and $\phi_{q}$ is left-hand differentiable at $a_{j}$, for all $j\geq0$, the differentiation of (\ref{4.18}) gives
\begin{equation*}
  \frac{\dd}{\dd x}((1-\mu_{q} x)\phi_{q}(x))-\lambda\sum^{\infty}_{k=1}\PP\{Z_{1}=k\}\big(q^{-k}\phi_{q}(q^{-k}x)-\phi_{q}(x)\big)=0,\quad x>0,
\end{equation*}
or after simplification,
\begin{equation}\label{4.19}
  (1-\mu_{q} x)\phi'_{q}(x)+(\lambda\PP\{Z_{1}\geq1\}-\mu_{q})\phi_{q}(x)-\lambda\sum^{\infty}_{k=1}q^{-k}\PP\{Z_{1}=k\}\phi_{q}(q^{-k}x)=0,\quad x>0.
\end{equation}
Here, the derivative $\phi'_{q}(x)$ is understood in the classical sense for each $(a_{j+1},a_{j})\ni x$, $j\geq0$, and as left-hand derivatives whenever $x=a_{j}$.

The functional relations in (\ref{4.17}) can be established with an induction argument. First, by the bound in (\ref{4.14}), $\phi_{q}(x)=0$ for $x>a_{0}$. For $x\in(a_{1},a_{0}]$, $\phi_{q}(q^{-k}x)=0$ for all $k\geq1$, and (\ref{4.19}) yields a first-order homogeneous ODE,
\begin{equation*}
  (1-\mu_{q} x)\phi'_{q}(x)+(\lambda\PP\{Z_{1}\geq1\}-\mu_{q})\phi_{q}(x)=0,\quad x\leq a_{0},
\end{equation*}
which has a positive solution, \text{e.g.} (by setting the constant of integration to 1),
\begin{equation*}
  h_{q,0}(x)=(1-\mu_{q}x)^{\lambda\PP\{Z_{1}\geq1\}/\mu_{q}-1},\quad x\leq a_{0}.
\end{equation*}
Suppose that for arbitrary $j\geq1$, $h_{q,j}$ is the (unique) solution to the nonhomogeneous ODE
\begin{equation}\label{4.20}
  (1-\mu_{q}x)\phi'_{q}(x)+(\lambda\PP\{Z_{1}\geq1\}-\mu_{q})\phi_{q}(x)=\frac{\lambda}{C}\sum^{j}_{k=1}q^{-k}\PP\{Z_{1}=k\}h_{q,j-k}(q^{-k}x),\quad x\leq a_{j},
\end{equation}
subject to the boundary condition $\phi_{q}(a_{j})=h_{q,j-1}(a_{j})/C$, where $C>0$ is some to-be-determined constant, and $\phi_{q}=h_{q,j-k}/C$ on the interval $(a_{j-k+1},a_{j-k}]$, for all $k\leq j$. This implies that $\phi_{q}=h_{q,j}/C$ on $(a_{j+1},a_{j}]$ as well. Thus, for $x\in(a_{j+2},a_{j+1}]$, $\phi_{q}(q^{-k}x)=h_{q,j+1-k}(q^{-k}x)$ for all $k\leq j+1$ while $\phi_{q}(q^{-k}x)=0$ for all $k\geq j+2$ as $q^{-k}x>a_{0}$, based on (\ref{4.19}) we obtain the ODE
\begin{equation*}
  (1-\mu_{q}x)\phi'_{q}(x)+(\lambda\PP\{Z_{1}\geq1\}-\mu_{q})\phi_{q}(x)=\frac{\lambda}{C}\sum^{j+1}_{k=1}q^{-k}\PP\{Z_{1}=k\}h_{q,j+1-k}(q^{-k}x), \quad x\leq a_{j+1},
\end{equation*}
with the boundary condition $\phi_{q}(a_{j+1})=h_{q,j}(a_{j+1})/C$, whose (unique) solution is $h_{q,j+1}$. This shows that given $C>0$, for every $j\geq1$, $\phi_{q}$ is identified as the unique solution to the first-order linear ODE (\ref{4.20}). Solving this ODE subject to the boundary condition gives the second equation in (\ref{4.17}). The fact that $\phi_{q}(x)=Ch_{q,j}(x)$ for $x\in(a_{j+1},a_{j}]$, $j\geq0$, and $\phi_{q}(x)=0$ for $x>a_{0}$ leads to the series in (\ref{4.15}), recalling that $a_{j}=q^{j}/\mu_{q}$.

Given $C>0$, it is clear from (\ref{4.17}) that the functions $h_{q,j}$'s, being continuous, are all uniformly bounded in $j$, which are also nonnegative as a direct consequence from the Fokker--Planck equation (\ref{4.18}). Therefore, $\phi_{q}$ in (\ref{4.15}) is nonnegative and integrable in $x>0$, which, upon normalization with (\ref{4.16}), is a density function, and by further consulting Carmona \text{et al.} \cite[Proposition 2.1]{CPY97} again, it is the density function of $I_{q}$ by solving (\ref{4.18}), as desired.
\end{proof}

A few additional remarks are due.

\begin{remark}\label{nd}
The general formula (\ref{4.15}) shows that, despite continuity, it is generally not true that $\phi_{q}\in\mathcal{C}^{1}((0,1/\mu_{q}))$, as only semi-differentiability is guaranteed at each point $q^{j}/\mu_{q}$, $j\in\mathbb{Z}_{++}$.\footnote{This feature is not an oddity but well-known in the theory of delay differential equations with constant delays; see, e.g., Smith \cite{S11}.} Nonetheless, as seen from the functional relations in (\ref{4.17}), one in fact has $\phi_{q}\in\mathcal{C}^{\infty}((q^{j+1}/\mu_{q},q^{j}/\mu_{q}))$, for every $j\in\mathbb{Z}_{++}$, which contrasts with the case with no drift (Theorem \ref{main}), in which $\phi_{q}\in\mathcal{C}^{\infty}(\mathbb{R}_{++})$.
\end{remark}

\begin{remark}
By passing to the limit $\mu\searrow0$, (\ref{4.15}) also unravels the case with no drift. Note that the limiting counterpart of (\ref{4.17}) is given by
\begin{align*}
  h_{q,0}(x)&=e^{-\lambda\PP\{Z_{1}\geq1\}x},\quad x>0, \\
  h_{q,j}(x)&=h_{q,0}(x)\bigg(1-\lambda\sum^{j}_{k=1}q^{-k}\PP\{Z_{1}=k\}\int^{\infty}_{x}e^{\lambda \PP\{Z_{1}\geq1\}y}h_{q,j-k}(q^{-k}y)\dd y\bigg),\quad x>0,\;j\in\mathbb{Z}_{++}.
\end{align*}
Hence, the resulting basis functions are necessarily linear combinations of exponentials of the form $e^{-\lambda \PP\{Z_{1}\geq1\}q^{-j}x}$, $x>0$, $j\in\mathbb{Z}_{+}$. This property agrees with our ansatz choice (\ref{4.7}) for the density function in the form of a generalized Dirichlet series.
\end{remark}

\begin{remark}
Straightforward integration of (\ref{4.15}) yields a formula for the cumulative distribution function, namely
\begin{equation*}
  \PP\{I_{q}\leq x\}=1-\frac{1}{C}\sum^{\infty}_{j=0}
  \begin{cases}
    \displaystyle \int^{q^{j}/\mu_{q}}_{x}h_{q,j}(y)\dd y+\sum^{j-1}_{i=0}\int^{q^{i}/\mu_{q}}_{q^{i+1}/\mu_{q}}h_{q,i}(y)\dd y,&\quad\text{if }\displaystyle \frac{q^{j+1}}{\mu_{q}}<x\leq\frac{q^{j}}{\mu_{q}}, \\
    0,&\quad\text{else},
  \end{cases}
\end{equation*}
for $x>0$, where $C$ and $h_{q,j}$'s are as in Theorem \ref{side}.
\end{remark}

The formula (\ref{4.15}), with (\ref{4.16}) and (\ref{4.17}), gives rise to an iterative procedure for computing the density function of $I_{q}$ using numerical integration techniques with high precision. Unfortunately, as illustrated through the following simple example, it appears to be an overall extremely arduous task to derive an explicit formula for the density function.

\begin{example}\label{e:pdu}
Recall the setting of Remark \ref{ppc}, where $S=N$ is a Poisson process with intensity $\lambda$, and take $\mu=\lambda=1$ for notational conciseness. Consider the standard exponential functional with $q=1/e$ (with subscript suppressed), so that $\mu_{q}=1$. In this case, with the L\'{e}vy measure reduced to $\nu_{1/e}(\dd z)=\updelta_{1}(\dd z)$, the functional relation in (\ref{4.17}) can be significantly simplified, i.e.,
\begin{equation}\label{4.21}
  h_{0}(x)=1>0,\quad h_{j}(x)=h_{j-1}(e^{-j})-e\int^{e^{-j}}_{x}\frac{h_{j-1}(ey)}{1-y}\dd y,
\end{equation}
and after some lengthy calculations, the density function of the exponential functional $I=\int^{\infty}_{0}e^{-(N_{t}+t)\dd t}\dd t$ can be expressed as
\begin{equation*}
  \phi(x)=\frac{1}{\sum^{\infty}_{j=0}\int^{e^{-j}}_{e^{-(j+1)}}h_{j}(x)\dd x}\sum^{\infty}_{j=0}
  \begin{cases}
    h_{j}(x),&\quad\text{if } e^{-(j+1)}<x\leq e^{-j}, \\
    0,&\quad\text{else},
  \end{cases}
\end{equation*}
with the following explicit expressions:
\begin{align*}
  h_{0}(x)&=1,\quad x\leq1, \\
  h_{1}(x)&=-e \log (1-x)+1+e (\log (e-1)-1),\quad x\leq\frac{1}{e}, \\
  h_{2}(x)&=e^2 \Li_2\bigg(\frac{e x-1}{e-1}\bigg)-e^2 \Li_2\bigg(-\frac{1}{e}\bigg)-e (1+e (\log (e-1)-1)) \log (1-x) \\
  &\quad+e^2 \log \bigg(\frac{e-e x}{e-1}\bigg) \log (1-e x)+e^2-e+1-e^2 \log (e-1)-e^2 \log (e+1) \log (e-1) \\
  &\quad+e^2 \log (e^2-1) \log (e-1)+e \log (e-1)+e^2 \log (e+1)-e^2 \log (e^2-1),\quad x\leq\frac{1}{e^{2}},\\
  h_{3}(x)&=\cdots;
\end{align*}
here $\Li_{\cdot}(\cdot)$ denotes the polylogarithm, a transcendental function satisfying the recurrence relation
\begin{equation*}
  \Li_{1}(z)=\log(1-z),\quad\Li_{m+1}(z)=\int^{z}_{0}\frac{\Li_{m}(y)}{y}\dd y,\quad m\in\mathbb{Z}_{++},\;|z|<1.
\end{equation*}
Hence, based on the special form of the integral operator in (\ref{4.21}), the basis functions $h_{j}$'s in this case can only contain polylogarithms of integer orders, but the number of terms can grow geometrically -- for instance, even after significant simplification with the aid of computer algebra, the function $h_{3}$ can take more than 20 lines to present.
\end{example}

\begin{remark}
In Example \ref{e:pdu}, if we extend the drift coefficient to $\mu=1/n$, for an integer $n\geq2$, other things unchanged, then from the first equation in (\ref{4.17}), the density function $\phi$ will have a polynomial right tail vanishing at $n$, and explicit formulae for $h_{j}$, $j\geq0$, are still available in terms of polylogarithms of integer orders. However, regardless of its complexity, such explicitness is not universal. For instance, by taking $\mu=2$ instead, after some calculations,
\begin{align*}
  h_{0}(x)&=\frac{1}{\sqrt{1-2x}},\quad x\leq\frac{1}{2}, \\
  h_{1}(x)&=\frac{-2 \sqrt{e} \log \big(\sqrt{1-2 e x}+\sqrt{e-2 e x}\big)+\sqrt{e} \log (e-1) +2}{2 \sqrt{1-2 x}},\quad x\leq\frac{1}{2e},
\end{align*}
but the primitive of the function $h_{1}(x)/\sqrt{1-2x}$ does not seem to be known in closed form.
\end{remark}

\smallskip

\begin{example}\label{e:mippd}
Let $S=V^{(n)}$ be an MIPP with $n\geq2$ iterations as in Example \ref{e:mipp}, having jump intensity $\lambda$. Then, for general $\mu>0$, the density function of the (standard, with $q=1/e$) exponential functional $I^{(n)}=\int^{\infty}_{0}e^{-(V^{(n)}_{t}+\mu t)}\dd t$ can be written as
\begin{equation*}
  \phi^{(n)}(x)=\frac{1}{\sum^{\infty}_{j=0}\int^{e^{-j}/\mu}_{e^{-(j+1)/\mu}}h^{(n)}_{j}(x)\dd x}\sum^{\infty}_{j=0}
  \begin{cases}
    h^{(n)}_{j}(x),&\quad\text{if }\displaystyle \frac{e^{-(j+1)}}{\mu}<x\leq\frac{e^{-j}}{\mu}, \\
    0,&\quad\text{else},
  \end{cases}
\end{equation*}
where after some simplification,
\begin{align*}
  h^{(n)}_{0}(x)&=(1-\mu x)^{\lambda\PP\{Z^{(n-1)}_{1}\geq1\}/\mu-1},\quad x\leq\frac{1}{\mu}, \\
  h^{(n)}_{1}(x)&=(1-\mu x)^{\lambda\PP\{Z^{(n-1)}_{1}\geq1\}/\mu+1}+\bigg(\frac{e}{e-1}\bigg)^{\lambda\PP\{Z^{(n-1)}_{1}\geq1\}/\mu} \frac{\PP\{Z^{(n-1)}_{1}=1\}}{\PP\{Z^{(n-1)}_{1}\geq1\}} \\
  &\quad\times(1-\mu x)^{\lambda\PP\{Z^{(n-1)}_{1}\geq1\}/\mu-1}(1-e\mu x)^{\lambda\PP\{Z^{(n-1)}_{1}\geq1\}/\mu} \\
  &\quad\times\,_{2}\mathrm{F}_{1}\bigg(\frac{\lambda\PP\{Z^{(n-1)}_{1}\geq1\}}{\mu},\frac{\lambda\PP\{Z^{(n-1)}_{1}\geq1\}}{\mu}; \frac{\lambda\PP\{Z^{(n-1)}_{1}\geq1\}}{\mu}+1\bigg|\frac{e\mu x-1}{e-1}\bigg),\quad x\leq\frac{1}{e\mu}, \\
  h^{(n)}_{2}(x)&=\cdots,
\end{align*}
where recall that $Z^{(n-1)}_{1}$ has the same distribution as $V^{(n-1)}_{1}$, and $\,_{2}\mathrm{F}_{1}(\cdot,\cdot;\cdot|\cdot)$ is the Gauss hypergeometric function, which arises from its familiar integral representation matched to the integral operator in (\ref{4.17}). For the same reason as in the last remark, depending on the exact value of $\mu$, the subsequent function $h^{(n)}_{2}$ (and the ones that follow) may or may not have a closed-form expression.
\end{example}

\medskip

\subsection{General decreasing functionals}\label{S:3.3}

In this section we discuss the following general integral functional of the IVS $S$:
\begin{equation}\label{ig}
  \mathcal{I}^{(g)}:=\int_0^{\infty}g(S_t)\dd t,
\end{equation}
for any deterministic Borel function $g:\mathbb{R}_{+}\mapsto\mathbb{R}_{+}$. This type of functional was originally studied by Bertoin \text{et al.} \cite{BLM08} for a general L\'{e}vy process $X$. One of their main results states that if $\mathcal{I}^{(g)}$ is $\PP$-\text{a.s.} finite, then under rather weak conditions on $g$, $\mathcal{I}^{(g)}$ admits a density function for a large class of L\'{e}vy processes, for which the finiteness of $\mathcal{I}^{(g)}$ is an important sufficient condition.

Our goal is to obtain sufficient and necessary conditions on $g$ for the finiteness of $\mathcal{I}^{(g)}$ in (\ref{ig}). For clarity, we present the following lemma, which is a restatement of Bertoin \text{et al.} \cite[Theorem 3.9]{BLM08} tailored to the current setting.

\begin{lemma}\label{ber}
Let $X$ be a (nondeterministic) transient nonnegative L\'evy process with L\'evy measure $\nu_X$ and let $g:\mathbb{R}_{+}\mapsto\mathbb{R}_{+}$ be a Borel function. Suppose that the integral functional $\mathcal{I}^{(g)}_{[X]}=\int^{\infty}_{0}g(X_{t})\dd t<\infty$ exists $\PP$-\text{a.s.} and that there is a compact interval $D\subset\mathbb{R}\setminus\{0\}$ with $\nu_{X}(D)>0$ and a constant $x_0>0$ such that
\begin{equation}\label{g-b}
  \mathrm{Leb}(\{x\geq x_0:\; g(x)=g(x+r) \})=0, \quad\forall r\in D,
\end{equation}
where $\mathrm{Leb}$ denotes the Lebesgue measure. Then, $\mathcal{I}^{(g)}_{[X]}$ is absolutely continuous.
\end{lemma}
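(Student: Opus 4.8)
The plan is to obtain the assertion as a specialization of Bertoin \text{et al.} \cite[Theorem 3.9]{BLM08} to nonnegative base processes, so that the argument splits into three parts: recording the structural simplifications available here, matching the hypotheses, and---for transparency---recalling the mechanism behind that theorem. For the first part, a nonnegative L\'evy process is a subordinator, hence its L\'evy measure $\nu_{X}$ is carried by $(0,\infty)$; in particular any compact set $D\subset\mathbb{R}\setminus\{0\}$ with $\nu_{X}(D)>0$ may be taken inside $(0,\infty)$, and $\nu_{X}(D)>0$ already forces $X$ to be nondeterministic and therefore transient, since a nontrivial subordinator has non-decreasing paths that drift to $+\infty$ $\PP$-a.s. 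These remarks line up the ambient setting of the lemma with that of \cite[Theorem 3.9]{BLM08}.

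For the second part, besides the standing assumption $\mathcal{I}^{(g)}_{[X]}<\infty$ $\PP$-a.s., \cite[Theorem 3.9]{BLM08} requires a compact set of positive L\'evy measure along which $g$ fails, in an a.e.\ sense, to be invariant under the associated translations; condition (\ref{g-b}) is exactly this requirement written in the present notation, the truncation level $x_{0}$ being the device that restricts the comparison of $g(x)$ with $g(x+r)$ to the region the functional effectively ``sees'' once $X$ has escaped a neighbourhood of the origin. With all hypotheses of \cite[Theorem 3.9]{BLM08} in force, its conclusion---the absolute continuity of $\mathcal{I}^{(g)}_{[X]}$---carries over verbatim.

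For the third part I would recall the idea of proof in \cite{BLM08}, which also clarifies the role of each hypothesis. Enumerate the jump times $T_{1}<T_{2}<\cdots$ at which $\Delta X\in D$, with sizes $J_{1},J_{2},\dots\in D$; by the L\'evy--It\^o decomposition these times form a Poisson process of rate $\nu_{X}(D)$, independent of the residual process $X^{(1)}$ obtained by deleting all $D$-jumps, while the $J_{k}$'s are i.i.d.\ with law $\nu_{X}|_{D}/\nu_{X}(D)$, independent of everything else. Choosing an index $m$ such that $X$ has already exceeded $x_{0}$ by time $T_{m-1}$ (possible $\PP$-a.s.\ by transience, as infinitely many $D$-jumps occur) and conditioning on the $\sigma$-field $\mathcal{G}$ generated by $X^{(1)}$, all the sizes, and all the jump times except $T_{m}$, the conditional law of $T_{m}$ is uniform on $[T_{m-1},T_{m+1}]$ (the first of two i.i.d.\ exponentials given their sum) and
\begin{equation*}
  \mathcal{I}^{(g)}_{[X]}=c(\mathcal{G})+\int_{T_{m-1}}^{T_{m}}\Bigl(g\bigl(X^{(1)}_{s}+A\bigr)-g\bigl(X^{(1)}_{s}+A+J_{m}\bigr)\Bigr)\,\dd s,
\end{equation*}
where $A=\sum_{k=1}^{m-1}J_{k}$ and $c(\mathcal{G})$ is $\mathcal{G}$-measurable. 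Since $X^{(1)}_{s}+A\geq x_{0}$ on $[T_{m-1},T_{m}]$, condition (\ref{g-b}) makes the integrand non-zero for Lebesgue-a.e.\ $s$, so the map $t\mapsto\int_{T_{m-1}}^{t}(\cdots)\,\dd s$ is absolutely continuous with an a.e.\ non-vanishing derivative; such a map pushes the (conditionally) uniform law of $T_{m}$ forward to an absolutely continuous measure, and averaging over $\mathcal{G}$ preserves absolute continuity.

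The step I expect to require the most care---and the real content of \cite[Theorem 3.9]{BLM08}---is precisely this last reduction. One must (a) organize the bookkeeping so that the perturbing $D$-jump is taken after $X$ has passed $x_{0}$ while the conditional law of that jump time remains absolutely continuous, which is where transience, the Poisson structure of the $D$-jumps, and the auxiliary level $x_{0}$ in (\ref{g-b}) are all jointly used; and (b) invoke the change-of-variables fact that an absolutely continuous function whose derivative vanishes only on a Lebesgue-null set maps the uniform law to an absolutely continuous measure. Both are handled in \cite{BLM08}, so within the present paper the proof reduces to the hypothesis verification of the first two parts together with a reference to that source.
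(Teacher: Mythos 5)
The paper offers no independent proof of this lemma---it is stated explicitly as a restatement of Bertoin \emph{et al.} \cite[Theorem 3.9]{BLM08} tailored to the present setting---and your proposal takes exactly the same route: check that the hypotheses (transience, $\PP$-a.s.\ finiteness of $\mathcal{I}^{(g)}_{[X]}$, and condition (\ref{g-b}) on a compact $D$ with $\nu_X(D)>0$) match and invoke that theorem, so the approaches coincide. Your supplementary sketch of the mechanism goes beyond what the paper records and glosses one delicate point (the integrand $g(X^{(1)}_{s}+A)-g(X^{(1)}_{s}+A+J_{m})$ need not be nonzero for a.e.\ $s$ merely because the exceptional set in (\ref{g-b}) is Lebesgue-null, since $X^{(1)}_{s}+A$ can dwell on a null set for positive time, e.g.\ for lattice-valued residual processes), but as you explicitly defer those details to the cited source, this does not affect the correctness of your reduction.
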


Every compound Poisson-type subordinator with nontrivial jumps is automatically transient. It is thus a direct consequence from Lemma \ref{ber} that for any function $g$ that satisfies condition (\ref{g-b}), the functional $\mathcal{I}^{(g)}_{[X]}$ admits a density function whenever it is ($\PP$-a.s.) finite. To derive conditions on $g$ ensuring such finiteness, we state the next lemma.

\begin{lemma}\label{lem36}
Let $\{E_k\}^{\infty}_{k=1}$ be a sequence of independent $\text{Exp}(\lambda)$-distributed random variables and let $b=\{b_k\}^{\infty}_{k=1}$ be a sequence of nonnegative numbers, with $b_{\max}:=\max_{k\geq1}{b_k}>0$. Then the random series
\begin{equation*}
  \mathcal{S}(b)=\sum_{k=1}^{\infty}b_k E_k
\end{equation*}
converges $\PP$-\text{a.s.} if and only if the series $\sum_{k=1}^{\infty}b_k$ converges.

Moreover, if $\sum_{k=1}^{\infty}b_k$ converges, then the Laplace transform $\bar{\Phi}_{\mathcal{S}(b)}(u)$ of $\mathcal{S}(b)$ is well-defined for $\Re u>-\lambda/b_{\max}$, and thus $\mathcal{S}(b)$ has finite moments of all orders.
\end{lemma}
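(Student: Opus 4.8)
The plan is to view $\mathcal{S}(b)=\sum_{k\geq1}b_{k}E_{k}$ as a series of independent nonnegative random variables whose partial sums increase to $\mathcal{S}(b)$, and to work throughout with the explicit Laplace transform $\E e^{-vE_{k}}=\lambda/(\lambda+v)$, valid for $\Re v>-\lambda$. For the ``if'' direction, assume $\sum_{k\geq1}b_{k}<\infty$; since $\E E_{k}=1/\lambda$, Tonelli's theorem gives $\E\mathcal{S}(b)=\lambda^{-1}\sum_{k\geq1}b_{k}<\infty$, so the increasing partial sums are bounded in $L^{1}$ and therefore converge $\PP$-a.s.\ to a finite limit.

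For the ``only if'' direction I would argue by contraposition, so suppose $\sum_{k\geq1}b_{k}=\infty$; the goal is $\mathcal{S}(b)=\infty$ $\PP$-a.s. By independence and monotone convergence,
\begin{equation*}
  \E e^{-\mathcal{S}(b)}=\prod_{k=1}^{\infty}\E e^{-b_{k}E_{k}}=\prod_{k=1}^{\infty}\frac{1}{1+b_{k}/\lambda},
\end{equation*}
and this product equals $0$ exactly when $\sum_{k\geq1}\log(1+b_{k}/\lambda)=\infty$. A routine comparison shows the latter holds whenever $\sum_{k\geq1}b_{k}=\infty$: if infinitely many $b_{k}\geq1$ the series of logarithms diverges term by term, while if all but finitely many $b_{k}$ are $<1$ then concavity of the logarithm gives $\log(1+b_{k}/\lambda)\geq cb_{k}$ for those indices, with $c=c(\lambda)>0$. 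Hence $\E e^{-\mathcal{S}(b)}=0$, which forces $e^{-\mathcal{S}(b)}=0$, i.e.\ $\mathcal{S}(b)=\infty$, $\PP$-a.s., completing the dichotomy. (Kolmogorov's three-series theorem applied to $\sum_{k}\E[\min(b_{k}E_{k},1)]$, which under $b_{\max}<\infty$ is comparable to $\sum_{k}b_{k}$, gives an alternative route.)

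For the moment claim, assume $\sum_{k\geq1}b_{k}<\infty$, so that $b_{\max}<\infty$, and fix $u\in\mathbb{C}$ with $\Re u>-\lambda/b_{\max}$. Then $\Re(u)b_{k}/\lambda>-b_{k}/b_{\max}\geq-1$ for every $k$, so $|1+ub_{k}/\lambda|\geq1+\Re(u)b_{k}/\lambda$ stays bounded away from $0$ uniformly in $k$; combined with $\sum_{k}b_{k}<\infty$ this yields $\sum_{k}\bigl|\log(1+ub_{k}/\lambda)\bigr|<\infty$, so the infinite product $\prod_{k}\lambda/(\lambda+ub_{k})$ converges, and in particular $\E e^{-\Re(u)\mathcal{S}(b)}=\prod_{k}\lambda/(\lambda+\Re(u)b_{k})<\infty$. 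This last finiteness dominates the partial products and legitimizes interchanging $\E$ with the limit, whence
\begin{equation*}
  \bar{\Phi}_{\mathcal{S}(b)}(u)=\prod_{k=1}^{\infty}\frac{\lambda}{\lambda+ub_{k}},\qquad\Re u>-\frac{\lambda}{b_{\max}},
\end{equation*}
is well-defined and finite. Since $-\lambda/b_{\max}<0$, this half-plane contains a neighborhood of the origin, so the moment generating function of $\mathcal{S}(b)$ is finite near $0$ and $\mathcal{S}(b)$ has finite moments of all orders.

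I do not expect a serious obstacle; the only point needing a little care is the uniform lower bound on $|1+ub_{k}/\lambda|$ — which is precisely where the hypothesis $b_{\max}<\infty$ enters and where the threshold $-\lambda/b_{\max}$ comes from — together with the routine dominated-convergence argument that pushes the expectation through the infinite product once that bound is in place.
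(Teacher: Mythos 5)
Your proof is correct, and its core is the same computation the paper uses: both read the convergence dichotomy and the Laplace transform off the product $\prod_{k}\lambda/(\lambda+ub_{k})$ and its comparison with $\sum_{k}b_{k}$. You do, however, organize the almost-sure part differently: the paper first invokes the Hewitt--Savage zero-one law and then argues that a.s. convergence is equivalent to finiteness of the log-Laplace transform, whereas you obtain sufficiency from $\E\mathcal{S}(b)=\lambda^{-1}\sum_{k}b_{k}<\infty$ (Tonelli plus monotonicity) and necessity by showing $\E e^{-\mathcal{S}(b)}=\prod_{k}(1+b_{k}/\lambda)^{-1}=0$, which forces $\mathcal{S}(b)=\infty$ a.s. directly; this dispenses with the zero-one law and even yields a.s. divergence rather than mere failure of a.s. convergence. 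Your treatment of the Laplace transform on $\Re u>-\lambda/b_{\max}$ matches the paper's (monotone convergence along partial sums, then comparison of $\sum_{k}|\log(1+ub_{k}/\lambda)|$ with $\sum_{k}b_{k}$), with the explicit product formula and the domination step for complex $u$ as a slight refinement. One small imprecision: the chain $\Re(u)b_{k}/\lambda>-b_{k}/b_{\max}\geq-1$ only gives positivity of $1+\Re(u)b_{k}/\lambda$, not a bound away from zero uniformly in $k$ (it degenerates when $b_{k}=b_{\max}$); the uniform bound you need is $1+\Re(u)b_{k}/\lambda\geq\min\{1,\,1+\Re(u)b_{\max}/\lambda\}>0$, which follows from the strict inequality $\Re u>-\lambda/b_{\max}$, so the fix is immediate.
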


\begin{proof}
Since the event $\{\mathcal{S}(b)<\infty\}$ is exchangeable (or invariant under finite permutations), the Hewitt--Savage zero-one law implies that $\PP\{\mathcal{S}(b)<\infty\}\in\{0,1\}$. The convergence of $\mathcal{S}(b)$ is equivalent to the finiteness of $\Re\log\E e^{-u\mathcal{S}(b)}$. We have
\begin{equation*}
  \log\E e^{-u\mathcal{S}(b)}=\log\E e^{-u\sum_{k=1}^{\infty}b_kE_k}=\sum_{k=1}^{\infty}\log\frac{\lambda}{\lambda+ub_k},
\end{equation*}
for $\Re u\geq0$. Note that $\sum_{k=1}^{\infty}\log(\lambda/(\lambda+ub_k)$ converges if and only if
$\sum_{k=1}^{\infty}\log(1+ub_{k}/\lambda)$ does, and the convergence of the latter is equivalent to that of $\sum^{\infty}_{k=1}b_{k}$.

For the second part of the lemma, note that as $\mathcal{S}(b)$ is a nonnegative random variable, it is sufficient to show that $\E e^{-u\mathcal{S}(b)}<\infty$ for real $u\in(-\lambda/b_{\max},0)$, with $b_{\max}>0$. Consider the partial sum $\mathcal{S}^{(n)}(b)=\sum_{k=1}^nb_kE_k$, which satisfies that
\[
\E e^{-u\mathcal{S}^{(n)}(b)}=\prod_{k=1}^n\frac{\lambda}{\lambda+ub_k}.
\]
Monotone convergence theorem implies that $\lim_{n\to \infty}\E e^{-u\mathcal{S}^{(n)}(b)}=\E e^{-u\mathcal{S}(b)}$. Therefore, it is sufficient to check that $\lim_{n\to \infty}\prod_{k=1}^n\lambda/(\lambda+ub_k)<\infty$ if $u\in (-\lambda/b_{\max}, 0)$, but we have that
\begin{equation*}
  \prod_{k=1}^n\frac{\lambda}{\lambda+ub_k}=\prod_{k=1}^n\bigg(1+\frac{|u|b_k}{\lambda-|u|b_k}\bigg),
\end{equation*}
when $n\to\infty$, whose convergence is equivalent to that of $\sum_{k=1}^\infty |u|b_k/(\lambda-|u|b_k)$. The condition $|u|\in(0,\lambda/b_{\max})$ guarantees that this is an infinite series with positive terms, and a ratio test with the convergence of $\sum_{k=1}^{\infty}b_k$ shows the desired convergence.
\end{proof}

Lemma \ref{lem36} implies that the infinite product $\prod_{k=1}^{\infty}\lambda/(\lambda+ub_k)$ is the Laplace transform of a random variable as long as $\sum_{k=1}^{\infty}b_k$ converges, and it is well defined for $\Re u>-\lambda/b_{\max}$.

For any compound Poisson process $X$ with intensity $\lambda$ and nonnegative jump variables $Z_{i}$'s, let us observe that
\begin{equation}\label{Igg}
  \mathcal{I}^{(g)}_{[X]}:=\int^{\infty}_{0}g(X_{t})\dd t=\sum_{k=0}^{\infty}g(k\bar{Z}_k)E_{k+1},
\end{equation}
where $E_k$'s are \text{i.i.d.} as $\text{Exp}(\lambda)$ and $\bar{Z}_k=\sum_{i=1}^{k}Z_i/k$ for each $k\geq 0$, with the understanding that $k\bar{Z}_k=0$ when $k=0$.

\begin{proposition}\label{prop37}
Let $g: \mathbb{R}_{+}\mapsto\mathbb{R}_{+}$ be a (not necessarily strictly) decreasing function with $g(0)>0$, and consider the functional $\mathcal{I}^{(g)}_{[X]}$ given in (\ref{Igg}). Then, $\mathcal{I}^{(g)}_{[X]}<\infty$ exists $\PP$-\text{a.s.} if and only if $\sum_{k=1}^{\infty}g(k)$ converges.
\end{proposition}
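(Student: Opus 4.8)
The plan is to strip off the random $\mathrm{Exp}(\lambda)$ weights by conditioning, reducing the question to the convergence of a deterministic series attached to the embedded random walk, and then to compare that series with $\sum_{k\ge1}g(k)$.

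First I would invoke Remark \ref{rem2} to assume, without loss of generality, that the jump variables $Z_i$ take only strictly positive integer values, so that the partial sums $S_k:=Z_1+\dots+Z_k$ (with $S_0:=0$) obey $S_k\ge k$ for every $k$. Conditioning on the whole sequence $(Z_i)_{i\ge1}$, the representation (\ref{Igg}) reads $\mathcal{I}^{(g)}_{[X]}=\sum_{j\ge1}g(S_{j-1})E_j$, a weighted series of i.i.d. $\mathrm{Exp}(\lambda)$ variables whose coefficients $b_j=g(S_{j-1})$ are now \emph{deterministic}; since $g(0)>0$ we have $b_1=g(0)>0$, so $b_{\max}>0$. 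Lemma \ref{lem36}, applied under this conditioning, then gives that $\mathcal{I}^{(g)}_{[X]}<\infty$ a.s. exactly when $\sum_{j\ge1}g(S_{j-1})=g(0)+\sum_{k\ge1}g(S_k)$ is finite, that is (since $g(0)<\infty$) exactly when $\sum_{k\ge1}g(S_k)<\infty$. Averaging this conditional dichotomy over $(Z_i)$ yields $\PP\{\mathcal{I}^{(g)}_{[X]}<\infty\}=\PP\{\sum_{k\ge1}g(S_k)<\infty\}$; and because $g$ is bounded by $g(0)$, the event $\{\sum_{k\ge1}g(S_k)<\infty\}$ is unchanged by any finite permutation of the $Z_i$'s, so the Hewitt--Savage zero-one law forces this probability to lie in $\{0,1\}$. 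It therefore remains to show that $\sum_{k\ge1}g(S_k)<\infty$ a.s. if and only if $\sum_{k\ge1}g(k)<\infty$.

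The ``if'' direction is immediate and purely deterministic: $S_k\ge k$ together with the monotonicity of $g$ gives $g(S_k)\le g(k)$, hence $\sum_{k\ge1}g(S_k)\le\sum_{k\ge1}g(k)<\infty$. For the converse I would argue by contraposition, assuming $\sum_{k\ge1}g(k)=\infty$. If $g$ does not vanish at infinity, both series diverge trivially, so assume $g(x)\to0$. By the strong law of large numbers $S_k/k\to m:=\E Z_1$ a.s.; when $m<\infty$, for almost every path there is a (random) index $K$ with $S_k\le(m+1)k$ for all $k\ge K$, whence $g(S_k)\ge g((m+1)k)$ and thus $\sum_{k\ge1}g(S_k)\ge\sum_{k\ge K}g((m+1)k)$. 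The latter series diverges, since for a nonnegative decreasing $g$ and any fixed $c>0$ the integral test shows that $\sum_{k\ge1}g(ck)$, $c^{-1}\int_{c}^{\infty}g(y)\,\dd y$, and $\sum_{k\ge1}g(k)$ converge or diverge together. Hence $\sum_{k\ge1}g(S_k)=\infty$ a.s., which closes the equivalence.

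The conditioning, the appeal to Lemma \ref{lem36}, and the zero-one reduction are routine; the step I would expect to be the main obstacle is the upper control of $S_k$ in the converse direction, which forces $S_k$ to grow at most linearly and so leans on $\E Z_1<\infty$ via the SLLN. It would accordingly be cleanest to carry $\E Z_1<\infty$ as a standing assumption (or to verify it in each application -- it holds for all the IVS examples with a finite L\'evy mean); the heavy-tailed regime $\E Z_1=\infty$ would call for a separate, renewal-theoretic analysis of the counting function of $(S_k)_{k\ge1}$, which is in any case not needed for the inverse-power functionals with $p>1$ treated next.
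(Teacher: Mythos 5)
Your argument is correct in substance and rests on the same pillars as the paper's proof -- Lemma \ref{lem36} together with law-of-large-numbers control of the partial sums $S_k=Z_1+\dots+Z_k$, and Hewitt--Savage for the zero--one dichotomy -- but the mechanics are genuinely different. You condition on the jump sequence, apply Lemma \ref{lem36} conditionally, and thereby reduce the whole question to the deterministic comparison of $\sum_k g(S_k)$ with $\sum_k g(k)$, settled by the pathwise bounds $k\le S_k\le(\E Z_1+1)k$ (the latter eventually, by the SLLN) and an integral-test scaling lemma $\sum_k g(ck)\asymp\sum_k g(k)$. The paper instead keeps the random coefficients: it uses Egorov's theorem to sandwich the tail $\sum_{k\ge K}g(k\bar{Z}_k)E_{k+1}$ between two series with deterministic coefficients $g(k(\E Z_1\pm\delta))$ on a set of probability at least $1-\epsilon$, and only then invokes Lemma \ref{lem36} on the bounding series. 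Your conditioning route is arguably cleaner, since it isolates a purely deterministic series comparison; the paper's sandwich has the advantage of working directly with arbitrary nonnegative (real-valued) jumps. This matters for your opening step: the ``WLOG $Z_i$ are strictly positive integers'' via Remark \ref{rem2} is only legitimate in the IVS case -- Remark \ref{rem2} removes an atom at zero, it does not discretize real-valued jumps, whereas (\ref{Igg}) and Proposition \ref{prop37} are stated for general nonnegative jumps. For the stated generality your bound $g(S_k)\le g(k)$ should be replaced by the eventual bound $S_k\ge(\E Z_1-\delta)k$ from the SLLN, after which your own scaling lemma finishes the ``if'' direction exactly as it finishes the converse; this is precisely the role the Egorov sandwich plays in the paper. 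Finally, the reliance on $\E Z_1<\infty$ that you flag for the converse is shared by the paper's proof (Egorov is applied to $\bar{Z}_k\to\E Z_1$ and the sandwich needs $0<\E Z_1-\delta$ and $\E Z_1+\delta<\infty$), so carrying it as a standing assumption costs you nothing relative to the paper; your caveat is well placed, since with heavy-tailed jumps ($\E Z_1=\infty$) the superlinear growth of $S_k$ can make $\sum_k g(S_k)$ summable while $\sum_k g(k)$ diverges, so only the ``if'' half -- the one actually needed for the inverse-power applications -- survives in that regime.
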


\begin{proof} In light of the Hewitt--Savage zero-one law, it is sufficient to prove the claim for a set of positive probability measure. By Egorov's theorem, for any $\epsilon>0$, there exists a measurable subset $D\equiv D(\epsilon)\in\Omega$ such that $\PP(D)<\epsilon$ and $\bar{Z}_k$ converges uniformly to $\E Z_1$ on $\Omega\setminus D=D^{\complement}$. This means that if we take any $\delta>0$ with $\E Z_1-\delta>0$, there exists a positive integer $K=K(\epsilon, \delta)$ such that for any $k\geq K$, and we have $\bar{Z}_k\in [\E Z_1-\delta, \E Z_1+\delta]$ in $D^{\complement}$. Consider the tail series
\[
\mathcal{I}^{(g),K}_{[X]}=\sum_{k=K}^{\infty}g(k\bar{Z}_k)E_{k+1}
\]
of (\ref{Igg}). Since $g$ is decreasing and $\bar{Z}_k\in [\E Z_1-\delta, \E Z_1+\delta]$ on $D^{\complement}$, we have that
\begin{equation}\label{320}
  \sum_{k=K}^{\infty}g(k(\E Z_1+\delta))E_{k+1}\le \mathcal{I}^{(g),K}_{[X]}\le \sum_{k=K}^{\infty}g(k(\E Z_1-\delta))E_{k+1}
\end{equation}
on $D^{\complement}$. By monotonicity, the convergence of $\sum_{k=K}^{\infty}g(k(\E Z_1-\delta))$ and of
$\sum_{k=K}^{\infty}g(k(\E Z_1+\delta))$ is equivalent to the convergence of $\sum_{k=K}^{\infty}g(k)$. Thus, by Lemma \ref{lem36}, together with (\ref{320}), we conclude that $\mathcal{I}^{(g),K}_{[X]}<\infty$ for $\PP$-\text{a.e.} $\omega\in D^{\complement}$ if and only if $\sum_{k=K}^{\infty}g(k)$ converges. The proof is complete.
\end{proof}



In addition, based on the series representation (\ref{Igg}) and Proposition \ref{prop37}, if $\sum_{k=1}^{\infty}g(k)$ converges, the functional $\mathcal{I}^{(g)}_{[X]}$ can be alternatively written as a sum of two independent random variables, namely
\begin{equation}\label{319}
  \mathcal{I}^{(g)}_{[X]}=g(0)E_1+\Lambda,
\end{equation}
where $\Lambda=\sum_{k=1}^{\infty}g(\sum_{i=1}^kZ_i)E_{k+1}$. Since $g(0)E_1\overset{\rm d}{=}\text{Exp}(\lambda/g(0))$ (with $g(0)>0$) has a probability density function, it follows from (\ref{319}) that so does $\mathcal{I}^{(g)}_{[X]}$ for any compound Poisson-type subordinator $X$. It does not seem possible to derive explicit formulae for the density function of $\mathcal{I}^{(g)}_{[X]}$ for a general function $g$, even if $X$ is specialized to a Poisson process; see Xia \cite[Appendix A]{X22}. A notable exception, however, is the inverse-power Poisson functional with $g(x)=(1+x)^{-p}$, which possesses a Laplace transform and density function of in explicit form (Xia \cite[Theorems 2.3 \& 2.4]{X22}); further details are also given in Section \ref{S:3.3.1}.

\begin{remark} Following the construction of (\ref{4.1}), one can as well consider adding drift to the IVS $S$ in (\ref{ig}), i.e.,
\begin{equation*}
  \tilde{\mathcal{I}}^{(g)}:=\int_0^{\infty}g(S_t+\mu t)\dd t,
\end{equation*}
with $\mu>0$. If $g$ satisfies the condition in Proposition \ref{prop37}, then it is obvious that $\tilde{\mathcal{I}}^{(g)}\leq\int_0^{\infty}g(t)\dd t\big/\mu<\infty$, $\PP$-a.s. This means that $\tilde{\mathcal{I}}^{(g)}$ is a bounded nonnegative random variable provided $\mu>0$, and it has a well-defined probability density function thanks to Lemma \ref{ber}. Alternatively, it admits the series representation
\begin{equation}\label{4.66}
  \tilde{\mathcal{I}}^{(g)}=\sum_{k=0}^{\infty}\int_{T_k}^{T_{k+1}}g(k\bar{Z}_k+\mu t)\dd t,
\end{equation}
where $T_0=0$, $\{T_k\}^{\infty}_{k=1}$ are the jump times of $S$, and $\{\bar{Z}_k\}$ are as defined in (\ref{Igg}). Because of their dependence structure, the integrals on the right side of (\ref{4.66}) make it rather awkward to estimate the density function of $\tilde{\mathcal{I}}^{(g)}$. For this reason, we restrict our discussion to (\ref{ig}) only.
\end{remark}

The following theorem gives some useful properties of the general functional $\mathcal{I}^{(g)}_{[X]}$ defined in (\ref{ig}).

\begin{theorem}\label{344}
Let $g:\mathbb{R}_{+}\mapsto\mathbb{R}_{+}$ be a bounded strictly decreasing function such that $\sum_{k=1}^{\infty}g(k)$ is convergent. Then, the following two statements hold for the functional $\mathcal{I}^{(g)}_{[X]}$ from (\ref{Igg}).
\begin{enumerate}
  \item[(i)] $\mathcal{I}^{(g)}_{[X]}$ is finite almost surely, and thus it admits a density function $\Phi^{(g)}_{[X]}(x)$, $x>0$.
  \item[(ii)] If the jumps of $X$ are bounded away from $0$, i.e., $\PP\{Z_1>\delta\}=1$ for some constant $\delta>0$, then the Laplace transform of $\mathcal{I}^{(g)}_{[X]}$,
      \begin{equation*}
        \bar{\Phi}^{(g)}_{[X]}(u):=\E e^{-u\mathcal{I}^{(g)}_{[X]}},
      \end{equation*}
      is well-defined for $\Re u\in(-\lambda/g(0), \infty)$, so that $\mathcal{I}^{(g)}_{[X]}$ has finite moments of all orders. The Laplace transform $\bar{\Phi}^{(g)}_{[X]}(u)$ has the limit representation
      \begin{equation}\label{approx}
        \bar{\Phi}^{(g)}_{[X]}(u)=\lim_{K\to \infty}\E\prod_{k=0}^K\frac{\lambda}{\lambda+g(\sum_{i=1}^kZ_i)u},\quad\Re u>-\frac{\lambda}{g(0)}.
      \end{equation}
\end{enumerate}
\end{theorem}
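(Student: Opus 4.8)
The plan is to build on the series representation (\ref{Igg}) and Proposition \ref{prop37} directly. For part (i), since $g$ is strictly decreasing with $g(0)>0$ and $\sum_{k=1}^{\infty}g(k)$ converges, Proposition \ref{prop37} immediately gives $\mathcal{I}^{(g)}_{[X]}<\infty$ $\PP$-a.s. For the existence of a density, I would invoke the decomposition (\ref{319}), $\mathcal{I}^{(g)}_{[X]}=g(0)E_1+\Lambda$, where $g(0)E_1$ is independent of $\Lambda$ and is $\text{Exp}(\lambda/g(0))$-distributed, hence has a density; the convolution of any law with an absolutely continuous law is absolutely continuous, so $\mathcal{I}^{(g)}_{[X]}$ has a density $\Phi^{(g)}_{[X]}$. (Alternatively one can cite Lemma \ref{ber} after checking the non-periodicity condition (\ref{g-b}), which holds automatically for a strictly decreasing $g$ since $g(x)=g(x+r)$ is impossible for $r$ in a compact interval bounded away from $0$; but the convolution argument is cleaner.)

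For part (ii), I would first condition on the jump sizes. Write $\mathcal{I}^{(g)}_{[X]}=\sum_{k=0}^{\infty}g(\sum_{i=1}^{k}Z_i)E_{k+1}$ as in (\ref{Igg}), and fix a realization of $\{Z_i\}$. Conditionally on $\{Z_i\}$, the coefficients $b_k:=g(\sum_{i=1}^{k-1}Z_i)$ are deterministic nonnegative numbers with $b_{\max}=b_1=g(0)$ (by strict monotonicity of $g$ and $\sum_{i=1}^{k-1}Z_i\geq 0$). The key input is that, under the hypothesis $\PP\{Z_1>\delta\}=1$, we have $\sum_{i=1}^{k-1}Z_i>(k-1)\delta$, so $b_k=g(\sum_{i=1}^{k-1}Z_i)<g((k-1)\delta)$ for $k\geq 2$; since $\sum_{k}g(k\delta)$ converges (by monotonicity, equivalent to convergence of $\sum_k g(k)$, as used in the proof of Proposition \ref{prop37}), the sequence $\{b_k\}$ is $\PP$-a.s. summable with a bound \emph{uniform in $\omega$}. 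Then Lemma \ref{lem36}, applied conditionally, tells us that the conditional Laplace transform $\E[e^{-u\mathcal{I}^{(g)}_{[X]}}\mid\{Z_i\}]=\prod_{k=0}^{\infty}\lambda/(\lambda+g(\sum_{i=1}^{k}Z_i)u)$ is well-defined for $\Re u>-\lambda/g(0)$, with the crucial point that the threshold $-\lambda/b_{\max}=-\lambda/g(0)$ does not depend on the realization. Taking expectations over $\{Z_i\}$ and justifying the interchange by monotone convergence (for $u\in(-\lambda/g(0),0)$, the partial products $\prod_{k=0}^{K}\lambda/(\lambda+g(\sum_{i=1}^{k}Z_i)u)$ are increasing in $K$ since each factor exceeds $1$) yields both the finiteness of $\bar{\Phi}^{(g)}_{[X]}(u)$ on $(-\lambda/g(0),\infty)$ and the limit representation (\ref{approx}). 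Finiteness of all moments then follows since $u\mapsto\bar{\Phi}^{(g)}_{[X]}(u)$ is finite on a neighborhood of $0$, so it is analytic there and its derivatives at $0$ give the moments.

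The main obstacle I anticipate is the uniformity of the radius of convergence across realizations of $\{Z_i\}$, and correspondingly the legitimacy of pushing the expectation through the infinite product in (\ref{approx}). Without the assumption that jumps are bounded away from $0$, a small value of $Z_1$ would not shrink $b_{\max}=g(0)$ (that is fixed), but the summability of $\{b_k\}$ could in principle fail on a null set or the decay could be too slow to control moments; the hypothesis $\PP\{Z_1>\delta\}=1$ is precisely what forces $\sum_{i=1}^{k-1}Z_i\to\infty$ at a linear rate and hence gives the uniform summable majorant $\{g((k-1)\delta)\}$. The interchange of limit and expectation is then handled by monotone convergence on the negative axis $u\in(-\lambda/g(0),0)$, where all factors are $\geq 1$; for $\Re u\geq 0$ the factors have modulus $\leq 1$ and dominated convergence applies trivially. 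Once that monotone-convergence step is in place, the rest is bookkeeping with Lemma \ref{lem36}.
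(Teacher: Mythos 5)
Your proof is correct and takes essentially the same route as the paper's: the pointwise bound $g\big(\sum_{i=1}^{k}Z_i\big)\le g(k\delta)$ giving a deterministic summable majorant with $b_{\max}=g(0)$, Lemma \ref{lem36} for the threshold $-\lambda/g(0)$, conditioning on the jump sizes to factor the exponential sojourn times into the product, and monotone/dominated convergence to justify the limit representation (\ref{approx}). The only cosmetic differences are that for (i) you obtain the density by convolving the $\mathrm{Exp}(\lambda/g(0))$ component in (\ref{319}) with the independent remainder (an observation the paper itself records right after (\ref{319})) instead of invoking Lemma \ref{ber}, and that you condition on the entire jump sequence rather than on $\upsigma(Z_1,\dots,Z_K)$ with a separate tail-to-one estimate.
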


\begin{proof}
\underline{Statement (i).}\quad
Given that $g$ is strictly decreasing on $\mathbb{R}_{+}$, it obviously verifies condition (\ref{g-b}). By Lemma \ref{ber} and Proposition \ref{prop37}, then $\mathcal{I}^{(g)}_{[X]}<\infty$ exists $\PP$-a.s., and the density function $\Phi^{(g)}_{[X]}$ also exists.


\smallskip

\noindent \underline{Statement (ii).}\quad First, according to (\ref{319}), since $E_{1}\overset{\rm d}{=}\mathrm{Exp}(\lambda)$ is independent from $\Lambda$,
\[
\E e^{-u\mathcal{I}_{[X]}^{(g)}}=\E e^{-ug(0)E_1}\E e^{-u\Lambda},
\]
justifying the necessity that $\Re u>-\lambda/g(0)$, where $g(0)>0$.

Then, by the assumed lower bound on $Z_{i}$'s and the strict decrease of $g$, we have $g(\sum_{i=0}^kZ_i)\le g(k\delta)$. Using (\ref{319}) again,
\[
\mathcal{I}^{(g)}_{[X]}\le g(0)E_1+\sum_{k=1}^{\infty}g(k\delta)E_{k+1}=\sum_{k=0}^{\infty}g(k\delta)E_{k+1}:=\mathfrak{I},\quad\PP\text{-a.s.}
\]
Clearly, $\{g(k\delta)\}_{k=0}^{\infty}$ is a nonnegative bounded sequence and the series $\sum_{k=0}^{\infty}g(k\delta)$ is convergent. Then, by applying Lemma \ref{lem36} (with $b_{k+1}=g(k\delta)$), we deduce that the Laplace transform $\bar{\Phi}_{\mathfrak{I}}(u)$ of the random variable $\mathfrak{I}$ is well-defined for $\Re u>-\lambda/\max_{k\geq0}g(k\delta)=-\lambda/g(0)$, which implies that $\bar{\Phi}^{(g)}_{[X]}(u)$ is also well-defined for $\Re u>-\lambda/g(0)$.


For the second part of the statement, note that the tail sum $\mathcal{I}^{(g),K+1}_{[X]}=\sum_{k=K+1}^{\infty}g(k\bar{Z}_k)E_{k+1}$ is decreasing in $K$ and tends to $0$ ($\PP$-a.s.) as $K\to\infty$. Write the $\upsigma$-field $\mathcal{G}_K=\upsigma(Z_1, Z_2, \dots, Z_K)$. Then, for $K$ given, conditional on $\mathcal{G}_K$, the random variables $g(k\bar{Z}_k)E_{k+1}\equiv g\big(\sum^{k}_{i=1}Z_{i}\big)E_{k+1}$, $0\leq k\leq K$, are mutually independent and also independent from $\mathcal{I}^{(g),K+1}_{[X]}$. Hence, the independence lemma implies that
\begin{equation*}
  \bar{\Phi}^{(g)}_{[X]}(u)=\E \E(e^{-u \mathcal{I}^{(g)}_{[X]}}|\mathcal{G}_K)=\bigg(\E\prod_{k=0}^K\E\big(e^{-u g(\sum_{i=1}^kZ_i)E_{k+1}}\big|\mathcal{G}_K\big)\bigg)\E e^{-u \mathcal{I}^{(g),K+1}_{[X]}},\quad\Re u>-\frac{\lambda}{g(0)}.
\end{equation*}
By applying the dominated convergence theorem and monotone convergence theorem, respectively, we have $\lim_{K\to\infty}\E e^{-u \mathcal{I}^{(g),K+1}_{[X]}}=1$ for $\Re u\in(-\lambda/g(0),0)$ and for $\Re u\geq0$, while
\begin{equation*}
  \prod_{k=0}^K\E\big(e^{-u g(\sum_{i=1}^kZ_i)E_{k+1}}\big|\mathcal{G}_K\big)=\prod_{k=0}^K\frac{\lambda}{\lambda+g(\sum_{i=1}^kZ_i)u}, \quad\PP\text{-a.s.},
\end{equation*}
thus verifying the limit representation (\ref{approx}).
\end{proof}

By Remark \ref{rem2}, any IVS automatically satisfies the condition in part (ii) of Theorem \ref{344}, and so for any qualifying function $g$, the functional $\mathcal{I}^{(g)}_{[S]}\equiv\mathcal{I}^{(g)}$ defined in (\ref{ig}) has finite moments of all orders, and its distribution is uniquely determined by its moments. As explained in statement (i) of Theorem \ref{344}, it also has a well-defined density function. For certain special cases of $S$ and $g$, the density function of $\mathcal{I}^{(g)}$ can be approximated reasonably well, as the following subsection demonstrates.

\medskip

\subsubsection{An important example: Inverse-power functionals of MIPPs}\label{S:3.3.1}

Inverse-power functionals, introduced in Xia \cite{X22} for Poisson processes, are another important class of decreasing integral functionals (besides the exponential) where the integrand decays following a power law in time.  For a given IVS $S$, the inverse-power functional is defined as
\begin{equation*}
  J_{p}:=\int^{\infty}_{0}(S_{t}+1)^{-p}\dd t,
\end{equation*}
for a power parameter $p>0$. This parameter plays a similar role as the base parameter $q\in(0,1)$ in the exponential functional (\ref{4.1}) to introduce a flexible scale effect. The inverse-power functional can be obtained by taking $g(x)=(x+1)^{-p}$, $x\geq0$, for the general functional (\ref{ig}), with $g(0)=1$. Then, according to Proposition \ref{prop37}, $J_{p}<\infty$ exists if and only if $p>1$.

For conciseness, we focus on the distribution of $J_{p}$ when $S$ is specified to an MIPP $V^{(n)}$ (Example \ref{mipp}), which we shall denote as
\begin{equation*}
  J^{(n)}_{p}:=\int^{\infty}_{0}\big(V^{(n)}_{t}+1\big)^{-p}\dd t,
\end{equation*}
with emphasis on the number of iterations. According to Remark \ref{remV}, it admits the series representation
\begin{align}\label{j_p}
 J_p^{(n)}&:=\int_0^{\infty}(V_t^{(n)}+1)^{-p}\dd t \nonumber\\
 &=\sum_{k=1}^{\infty}\int_{T^{(n)}_{k-1}}^{T^{(n)}_k}\frac{1}{\big(\sum_{i=1}^{k-1}(Z_i^{(n-1)}-Z_{i-1}^{(n-1)})+1\big)^p}\dd t \nonumber\\
 &=\sum_{k=1}^{\infty}\frac{E_k}{(Z_{k-1}^{(n-1)}+1)^p},
\end{align}
where $T^{(n)}_k$'s are the jump times of the MIPP or the jump times of the Poisson process $N$ in Remark \ref{remV}, with $T^{(n)}_{0}=0$, and $E_{k}$'s are \text{i.i.d.} $\text{Exp}(\lambda)$-distributed (sojourn times).

Analyzing the exact distribution of $J_{p}$ is nonetheless more cumbersome than doing the exponential functional $I_{q}$ in (\ref{4.1}). In particular, it is not possible to obtain a finite-dimensional Fokker--Planck equation for the distribution of $J_{p}$. Put differently, there is no finite-dimensional (time-inhomogeneous) Markov process with a distribution that of $J_{p}$ for any given $p>1$.\footnote{This is because differentiation of an inverse-power function necessarily generates a different function by increasing the power by 1, while the exponential is invariant to differentiation up to positive scaling. Refer \text{e.g.} to Carmona \text{et al.} \cite[Proof of Proposition 2.1]{CPY97}.}

Let $\bar{\psi}^{(n)}_{p}(u):=\E e^{-uJ^{(n)}_{p}}$, $\Re u>-\lambda$, denote the Laplace transform of $J^{(n)}_{p}$, and let $\psi^{(n)}_{p}(x)$, $x>0$, be the corresponding density function, both of which exist according to Theorem \ref{344}. The next theorem contains some general representations.

\begin{theorem}\label{thm4.3}
For every $p>1$, we have the limit-series representations
\begin{align}\label{5.4}
  \bar{\psi}^{(n)}_{p}(u)&=\lim_{K\to\infty}(\PP\{Z^{(n-1)}_{1}\geq1\})^{-K}\sum_{z_{1},z_{2},\dots,z_{K}\in\mathbb{Z}_{++}} \bigg(\prod^{K}_{k=1}\PP\{Z^{(n-1)}_{1}=z_{k}\}\bigg) \nonumber\\
  &\qquad\times\prod^{K}_{k=1}\bigg(1+\frac{u}{\lambda\PP\{Z^{(n-1)}_{1}\geq1\}\big(\sum^{k-1}_{j=1}z_{j}+1\big)^{p}}\bigg)^{-1},\quad\Re u>-\lambda\PP\{Z^{(n-1)}_{1}\geq1\},
\end{align}
where $\PP\{Z^{(n-1)}_{1}=z_{j}\}$ is specified in (\ref{71}) (and Remark \ref{remV}), and
\begin{align}\label{5.5}
  \psi^{(n)}_{p}(x)&=\lambda\lim_{K\to\infty}(\PP\{Z^{(n-1)}_{1}\geq1\})^{1-K}\sum_{z_{1},z_{2},\dots,z_{K}\in\mathbb{Z}_{++}} \bigg(\prod^{K}_{k=1}\PP\{Z^{(n-1)}_{1}=z_{k}\}\bigg) \nonumber\\
  &\quad\times\sum^{K}_{k_{1}=1}\bigg(\sum^{k_{1}-1}_{j=1}z_{j}+1\bigg)^{p} \exp\Bigg(-\lambda\PP\{Z^{(n-1)}_{1}\geq1\}\bigg(\sum^{k_{1}-1}_{j=1}z_{j}+1\bigg)^{p}x\Bigg) \nonumber\\
  &\qquad\times\prod^{K}_{k_{2}=1;k_{2}\neq k_{1}}\Bigg(1-\bigg(\frac{\sum^{k_{1}-1}_{j=1}z_{j}+1}{\sum^{k_{2}-1}_{j=1}z_{j}+1}\bigg)^{p}\Bigg)^{-1},\quad x>0.
\end{align}
\end{theorem}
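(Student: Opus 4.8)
The plan is to work from the series representation \eqref{j_p} for $J^{(n)}_{p}$, truncate it at level $K$, compute the Laplace transform and density of the truncation exactly by conditioning on the jump sizes, and then pass to the limit $K\to\infty$. First I would invoke Theorem \ref{344}(ii) applied to $X=V^{(n)}$ (which qualifies because, by Remark \ref{remV}, the MIPP is a compound Poisson process whose jumps are bounded away from $0$ after the reduction of Remark \ref{rem2}), so that the limit representation \eqref{approx} holds for $\bar\Phi^{(g)}_{[X]}$ with $g(x)=(x+1)^{-p}$. Unwinding \eqref{approx} for this choice of $g$ and taking the conditional expectation over $(Z^{(n-1)}_1,\dots,Z^{(n-1)}_K)$ explicitly --- each $Z^{(n-1)}_k$ taking value $z_k\in\mathbb{Z}_{++}$ with probability $\PP\{Z^{(n-1)}_1=z_k\}/\PP\{Z^{(n-1)}_1\geq1\}$ by Remark \ref{remV}, which produces the normalizing factor $(\PP\{Z^{(n-1)}_1\geq1\})^{-K}$ --- yields \eqref{5.4}, with the intensity in the denominators being the effective intensity $\lambda\PP\{Z^{(n-1)}_1\geq1\}$ of the reduced compound Poisson representation \eqref{mpr2}.

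For the density \eqref{5.5}, the idea is to recognize that, conditional on $\mathcal{G}_K$, the truncated functional $\sum_{k=1}^{K}E_k/(Z^{(n-1)}_{k-1}+1)^p$ from \eqref{j_p} is a finite sum of independent exponential random variables with distinct rates $\lambda\PP\{Z^{(n-1)}_1\geq1\}(\sum_{j=1}^{k-1}z_j+1)^p$, $k=1,\dots,K$ (distinctness holds $\PP$-a.s. since the partial sums $\sum_{j=1}^{k-1}z_j$ are strictly increasing in $k$ as $z_j\geq1$). Such a hypoexponential density is a textbook partial-fraction expansion: the density of a sum of independent $\mathrm{Exp}(\beta_1),\dots,\mathrm{Exp}(\beta_K)$ with distinct rates equals $\sum_{k_1=1}^{K}\beta_{k_1}e^{-\beta_{k_1}x}\prod_{k_2\neq k_1}\beta_{k_2}/(\beta_{k_2}-\beta_{k_1})$. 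Substituting $\beta_k=\lambda\PP\{Z^{(n-1)}_1\geq1\}(\sum_{j=1}^{k-1}z_j+1)^p$, the ratios $\beta_{k_2}/(\beta_{k_2}-\beta_{k_1})$ simplify to $(1-(\sum_{j=1}^{k_1-1}z_j+1)^p/(\sum_{j=1}^{k_2-1}z_j+1)^p)^{-1}$, the factor $\lambda\PP\{Z^{(n-1)}_1\geq1\}$ factors out of the $\beta_{k_1}$, and averaging over $(z_1,\dots,z_K)$ again contributes $(\PP\{Z^{(n-1)}_1\geq1\})^{-K}$; combined with the one explicit factor $\PP\{Z^{(n-1)}_1\geq1\}$ pulled outside, this gives the prefactor $(\PP\{Z^{(n-1)}_1\geq1\})^{1-K}$ in \eqref{5.5}. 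Taking $K\to\infty$ and using that the truncation tail $\sum_{k>K}E_k/(Z^{(n-1)}_{k-1}+1)^p\to0$ $\PP$-a.s. (by Proposition \ref{prop37}, since $\sum_k g(k)$ converges for $p>1$) gives convergence of the truncated densities to $\psi^{(n)}_p$; one would justify the interchange of limit and integration via the Laplace-transform convergence already established in \eqref{5.4} together with a continuity/tightness argument, or equivalently by noting that $J^{(n)}_p=g(0)E_1+\Lambda$ with $\Lambda$ independent, so the smoothing from the $g(0)E_1=E_1$ term makes the densities converge uniformly on compacts.

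The main obstacle I anticipate is not the algebra of the partial-fraction expansion --- that is routine --- but rather the \emph{rigorous justification of the limit} $K\to\infty$ at the level of densities rather than Laplace transforms. Pointwise convergence of the hypoexponential densities $\psi^{(n)}_{p,K}(x)\to\psi^{(n)}_p(x)$ for each fixed $x>0$ requires controlling the infinite product $\prod_{k_2\neq k_1}(1-(\cdots)^p)^{-1}$ and the infinite sum over $k_1$ uniformly; one must verify that these series and products converge absolutely (so that rearrangement and termwise limits are legitimate), which hinges on the decay of $\PP\{Z^{(n-1)}_1=z_k\}$ and the growth of the partial sums $\sum_j z_j$ --- this is where the convergence of $\sum_k g(k)=\sum_k (k+1)^{-p}$ for $p>1$ (Proposition \ref{prop37}) and the moment bounds from Lemma \ref{lem36} do the real work. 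A clean way to sidestep delicate analytic estimates is to argue that $\mathcal{I}^{(g),K}_{[X]}$ increases monotonically to $J^{(n)}_p$, hence converges in distribution, and since each $\mathcal{I}^{(g),K}_{[X]}$ has the explicit density $\psi^{(n)}_{p,K}$ and $J^{(n)}_p$ has a density by Theorem \ref{344}(i), Scheff\'e's lemma (or the independence of the $E_1$-component guaranteeing equicontinuity) upgrades the distributional convergence to $L^1$-convergence of densities, identifying the limit with the right-hand side of \eqref{5.5}.
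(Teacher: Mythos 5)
Your proposal is correct and follows essentially the same route as the paper: you obtain (\ref{5.4}) exactly as the paper does, by applying Theorem \ref{344}(ii) to the reduced compound Poisson representation (\ref{mpr2}) and writing out the expectation over the jump sizes $z_{1},\dots,z_{K}$, and your partial-fraction (hypoexponential) inversion of the conditional finite product is the same computation the paper performs via the Cauchy residue theorem (and explicitly identifies with Feller's hypoexponential density in the remark immediately following the theorem). Your extra care in justifying the $K\to\infty$ limit at the density level (via the smoothing from the independent $E_{1}$-component) goes somewhat beyond the paper's terse inversion step, but it is a refinement of, not a departure from, the same argument.
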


\begin{proof}
Let $n\geq1$ be given. Using the compound Poisson form (\ref{mpr2}), the representation (\ref{j_p}) can be rearranged into
\begin{equation*}
  J^{(n)}_{p}=\sum^{\infty}_{k=1}\frac{\tilde{E}_{k}}{\big(\sum^{k-1}_{j=1}\tilde{Z}^{(n-1)}_{j}+1\big)^{p}},
\end{equation*}
where the \text{i.i.d.} exponential random variables $\tilde{E}_{k}$'s have rate parameter $\lambda\PP\{Z^{(n-1)}_{1}\geq1\}$, and $\tilde{Z}^{(n-1)}_{j}$'s have the distribution of $Z^{(n-1)}_{1}$ restricted to the domain $\mathbb{Z}_{++}$ (hence bounded away from $0$), also independent, i.e.,
\begin{equation*}
  \PP\{\tilde{Z}^{(n-1)}_{1}=k\}=\frac{\PP\{Z^{(n-1)}_{1}=k\}}{\PP\{Z^{(n-1)}_{1}\geq1\}},\quad k\in\mathbb{Z}_{++}.
\end{equation*}


Thus, by consulting statement (ii) of Theorem \ref{344}, we readily have that
\begin{align*}
  \bar{\psi}^{(n)}_{p}(u)&=\lim_{K\to\infty}\E\prod^{K}_{k=1} \bigg(1+\frac{u}{\lambda\PP\{Z^{(n-1)}_{1}\geq1\}\big(\sum^{k-1}_{j=1}\tilde{Z}^{(n-1)}_{j}+1\big)^{p}}\bigg)^{-1} \\
  &=\lim_{K\to\infty}\sum_{z_{1},z_{2},\dots,z_{K}\in\mathbb{Z}_{++}}\bigg(\prod^{K}_{k=1}\PP\{\tilde{Z}^{(n-1)}_{1}=z_{k}\}\bigg) \\
  &\quad\times\prod^{K}_{k=1}\bigg(1+\frac{u}{\lambda\PP\{Z^{(n-1)}_{1}\geq1\}\big(\sum^{k-1}_{j=1}z_{j}+1\big)^{p}}\bigg)^{-1},\quad\Re u>-\lambda\PP\{Z^{(n-1)}_{1}\geq1\},
\end{align*}
the same as (\ref{5.4}).

Inverting the expression in (\ref{5.4}) comes down to inverting the last (finite) product, which can be accomplished using a standard Cauchy residue theorem (see Xia\cite[Equation (A2)]{X22}), noting that it has singularities exactly at
\begin{equation*}
  \varsigma_{k}=-\lambda\PP\{Z^{(n-1)}_{1}\geq1\}\bigg(\sum^{k-1}_{j=1}z_{j}+1\bigg)^{p},\quad 1\leq k\leq K,
\end{equation*}
all of which are simple poles because $z_{j}\geq1$, $\forall1\leq j\leq K-1$. Therefore, we obtain the expression (\ref{5.5}) after simplification.
\end{proof}

\smallskip

\begin{remark}
The (finite) product in (\ref{5.4}) can be alternatively identified as the Laplace transform of the so-called ``hypoexponential distribution'' resulting from summing $K$ independent exponential random variables with distinct rate parameters; see, e.g., Feller\cite[Chapter 1 Problem 12]{F71} for its density function, which yields us the same expression as (\ref{5.5}):
\begin{align*}
  \psi^{(n)}_{p}(x)&=\lambda\lim_{K\to\infty}(\PP\{Z^{(n-1)}_{1}\geq1\})^{1-K}\sum_{z_{1},z_{2},\dots,z_{K}\in\mathbb{Z}_{++}} \bigg(\prod^{K}_{k=1}\PP\{Z^{(n-1)}_{1}=z_{k}\}\bigg) \\
  &\qquad\times\sum^{K}_{k=1}\ell_{k}(z_{1},\dots,z_{j})\exp\Bigg(-\lambda\PP\{Z^{(n-1)}_{1}\geq1\}\bigg(\sum^{k-1}_{j=1}z_{j}+1\bigg)^{p}x\Bigg), \quad x>0,
\end{align*}
where $\ell_{k}(z_{1},\dots,z_{j})$'s are the so-called ``Lagrange basis polynomials'' associated with the set of points $\big\{\lambda\PP\{Z^{(n-1)}_{1}\geq1\}\big(\sum^{k-1}_{j=1}z_{j}+1\big)^{p}:\;k\in\mathbb{Z}\cap[1,K]\big\}$, for $z_{j}\geq1$ given.
\end{remark}

As before, (\ref{5.5}) suggests a universal way to establish the (presumably existent) density function of the general decreasing integral functional in (\ref{ig}), provided that it is finite $\PP$-a.s. Indeed, towards that end one only needs to replace the power function $(\cdot+1)^{-p}$ therein with the decreasing function of interest, $g:\mathbb{R}_{+}\mapsto\mathbb{R}_{+}$; compare Xia\cite[Equation (A3)]{X22} for the case of \emph{finite-time} inverse-power Poisson functionals.

In its current form, Theorem \ref{thm4.3}, despite being implementable, is generally much more computationally costly than Theorem \ref{main} for the exponential functional, as the former entails evaluating $K$ nested infinite series for some sufficiently large $K$, as opposed to a single infinite series. The only exception is when $n=1$, where $V^{(1)}=N$ becomes a Poisson process.

\begin{remark}
If $n=1$ and $V^{(1)}=N$ becomes a Poisson process, with the nested series gone and $z_{1}=\cdots=z_{K}=1$, $\forall K\geq1$, the formula (\ref{5.5}) reduces to
\begin{equation*}
  \psi^{(1)}_{p}(x)=\lambda\sum^{\infty}_{k=1}k^{p}e^{-\lambda k^{p}x}\prod^{\infty}_{k_{2}=1;k_{2}\neq k}\bigg(1-\bigg(\frac{k}{k_{2}}\bigg)^{p}\bigg)^{-1},\quad x>0,
\end{equation*}
which is equivalent, for integer-valued $p\geq2$ specifically, to the formula presented in Xia\cite[Theorem 2.4]{X22},
\begin{equation*}
  \psi^{(1)}_{p}(x)=\lambda p\sum^{\infty}_{k=1}\frac{(-1)^{k+1}k^{2p-1}e^{-\lambda k^{p}x}}{k!}\prod^{p-1}_{j=1}\Gf(-ke^{2\ii\pi j/p}),\quad x>0,
\end{equation*}
where $\Gf(\cdot)$ is recalled to be the Gamma function. Under the same condition, the Laplace transform (\ref{5.4}) can be expressed as (Xia \cite[Theorem 2.3]{X22})
\begin{equation}\label{ipl}
  \bar{\psi}^{(1)}_{p}(u)=\frac{u}{\lambda}\prod^{p}_{j=1}\Gf\bigg(-e^{\ii\pi(2j-1)/p}\bigg(\frac{u}{\lambda}\bigg)^{p}\bigg),\quad\Re u>-\lambda.
\end{equation}
\end{remark}

\medskip

\numberwithin{equation}{section}

\section{Tables and graphs}\label{S:4}

In this section, we provide graphical illustrations of the distribution of the exponential functional of an IVS, in (\ref{4.1}), including the case with no drift ($\mu=0$), using the proposed formulae (\ref{4.4}) and (\ref{4.5}) in Theorem \ref{main}, and the case with an additional drift ($\mu>0$), by means of the general semi-closed form formulae (\ref{4.15}), (\ref{4.16}), and (\ref{4.17}) in Theorem \ref{side}. The formula (\ref{5.5}) in Theorem \ref{thm4.3} for the inverse-power functional of an MIPP is illustrated as well. Focusing specifically on IVSs, we refer to Appendix \ref{A} for illustrations regarding general subordinators.

To give a concise presentation, for the exponential functional with no drift, we focus on three cases: when the IVS is an MIPP, a space-fractional Poisson process, or a negative-binomial process, corresponding to Example \ref{e:mipp}, Example \ref{e:sf}, or Example \ref{e:nb}, respectively. We consider four choices of the base parameter: $q\in\{1/(2e),1/e,3/(2e),2/e\}$, noting that the case $q=1/e$ corresponds with the standard exponential functional.

When applying Theorem \ref{main}, there are two series in (\ref{4.4}) that need to be truncated at some threshold $K>1$: a quasi-geometric series in the denominator and the main Dirichlet series containing $x>0$. Since $q\in(0,1)$ and the Dirichlet series is an asymptotic series (in $x$), it is enough to check whether the magnitude of
\begin{equation}\label{num}
  \frac{\lambda\PP\{Z_{1}\geq1\}\sum^{K}_{j=0}c_{j}}{\sum^{K}_{j=0}c_{j}q^{j}} =\frac{\sum^{\infty}_{j=0}c_{j}q^{j}}{\sum^{K}_{j=0}c_{j}q^{j}}\bigg(\mathcal{Z}(1) -\sum^{\infty}_{j=K+1}\tilde{c}_{j}\bigg)
\end{equation}
(recalling the Z-transform) is sufficiently small, which via (\ref{4.5}) (or (\ref{4.8})) depends on the decay rate of the jump probability masses $\PP\{Z_{1}=k\}$, $k\in\mathbb{Z}_{+}$. To clarify, $\lambda>0$ in (\ref{num}) is the jump intensity associated with the compound Poisson form in (\ref{vt}) and equals $\lambda$, $\lambda^{\alpha}$, and $-r\log(1-p_{0})$ in the parametrization of the MIPP, the space-fractional Poisson process, and the negative-binomial process, respectively. For all illustrations, we require that $\lambda\PP\{Z_{1}\geq1\}\big|\sum^{K}_{j=0}c_{j}\big/\sum^{K}_{j=0}c_{j}q^{j}\big|<10^{-3}$ in (\ref{num}). Table \ref{tb:1} reports the lowest truncation threshold $K$ that still visibly satisfies this criterion for all $q\leq2/e$ in each case, along with specified parameters.\footnote{For the MIPP $V^{(2)}$, $\PP\{Z_{1}\geq1\}=1-e^{-1}$ (with $\lambda=1$), while for the other two cases, $\PP\{Z_{1}\geq1\}=1$.} A significantly higher threshold is needed for the space-fractional Poisson process due to the slow decay of the probability mass function (\ref{psc});\footnote{This high threshold primarily serves to clearly capture the vanishing behavior of the left tail as $x\searrow0$; for $x>0.1$, a much smaller value (e.g., $K\leq20$) is enough by the asymptotic nature of the Dirichlet series.} in fact, $\E Z_{1}=\infty$ in this case. Figure \ref{density_phi}, Figure \ref{density_phi_2}, and Figure \ref{density_phi_3} display the density functions, respectively.

\begin{table}[H]\small
  \centering
  \caption{Specification for exponential functionals (no drift)}
  \label{tb:1}
  \begin{tabular}{c|c|c}
    \hline
    IVS ($S$) & parameter choice & truncation threshold ($K$) \\ \hline
    MIPP ($V^{(n)}$) & $\lambda=1$, $n=2$ & 8 \\
    space-fractional Poisson process ($N^{(\alpha)}$) & $\lambda=1$, $\alpha=9/10$ & 177 \\
    negative-binomial process ($B$) & $r=2$, $p_{0}=1/2$ & 8 \\
    \hline
  \end{tabular}
\end{table}

\begin{figure}[H]
  \centering
  \includegraphics[scale=0.33]{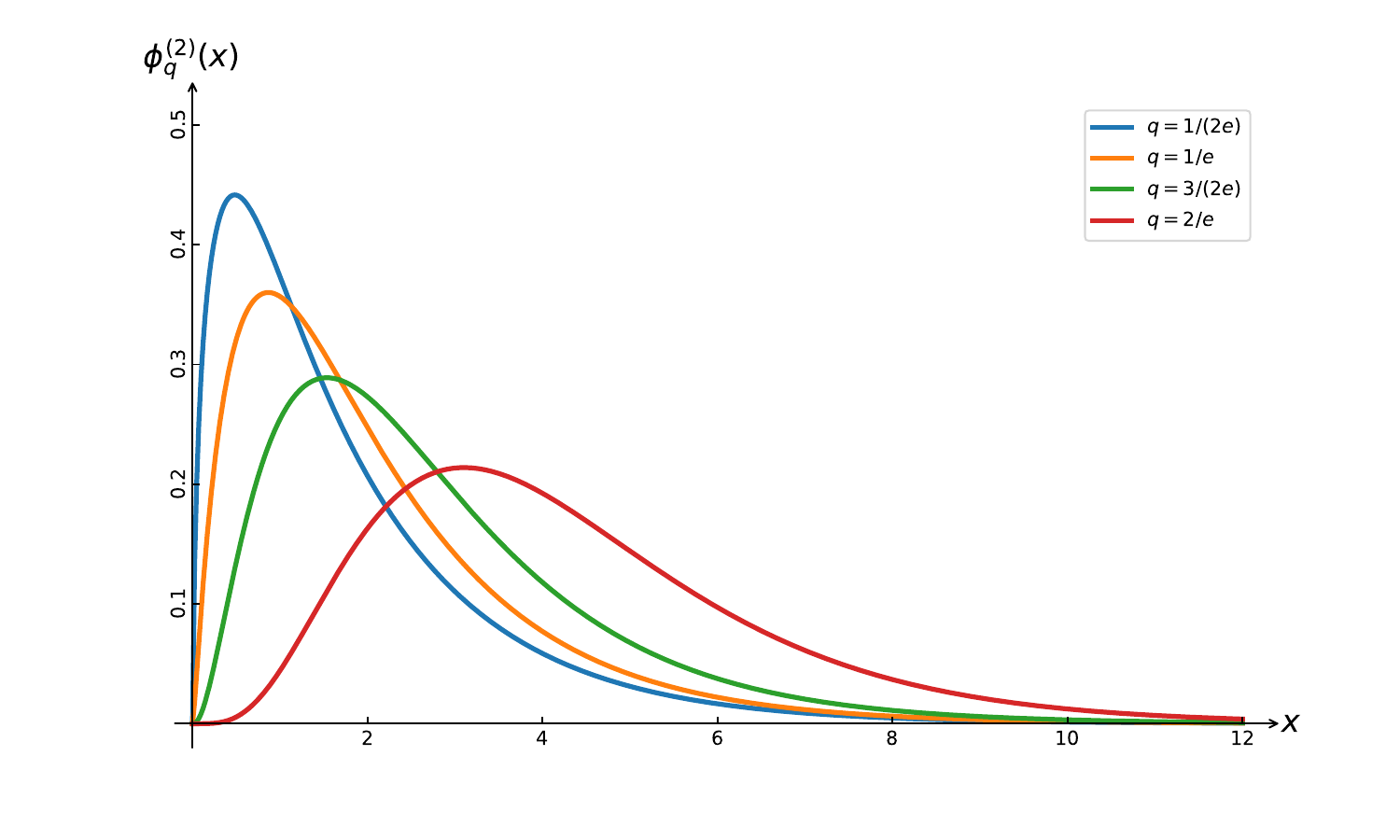}\\
  \caption{Density function of $I_q^{(2)}$ (MIPP case)}
  \label{density_phi}
\end{figure}

\begin{figure}[H]
  \centering
  \includegraphics[scale=0.33]{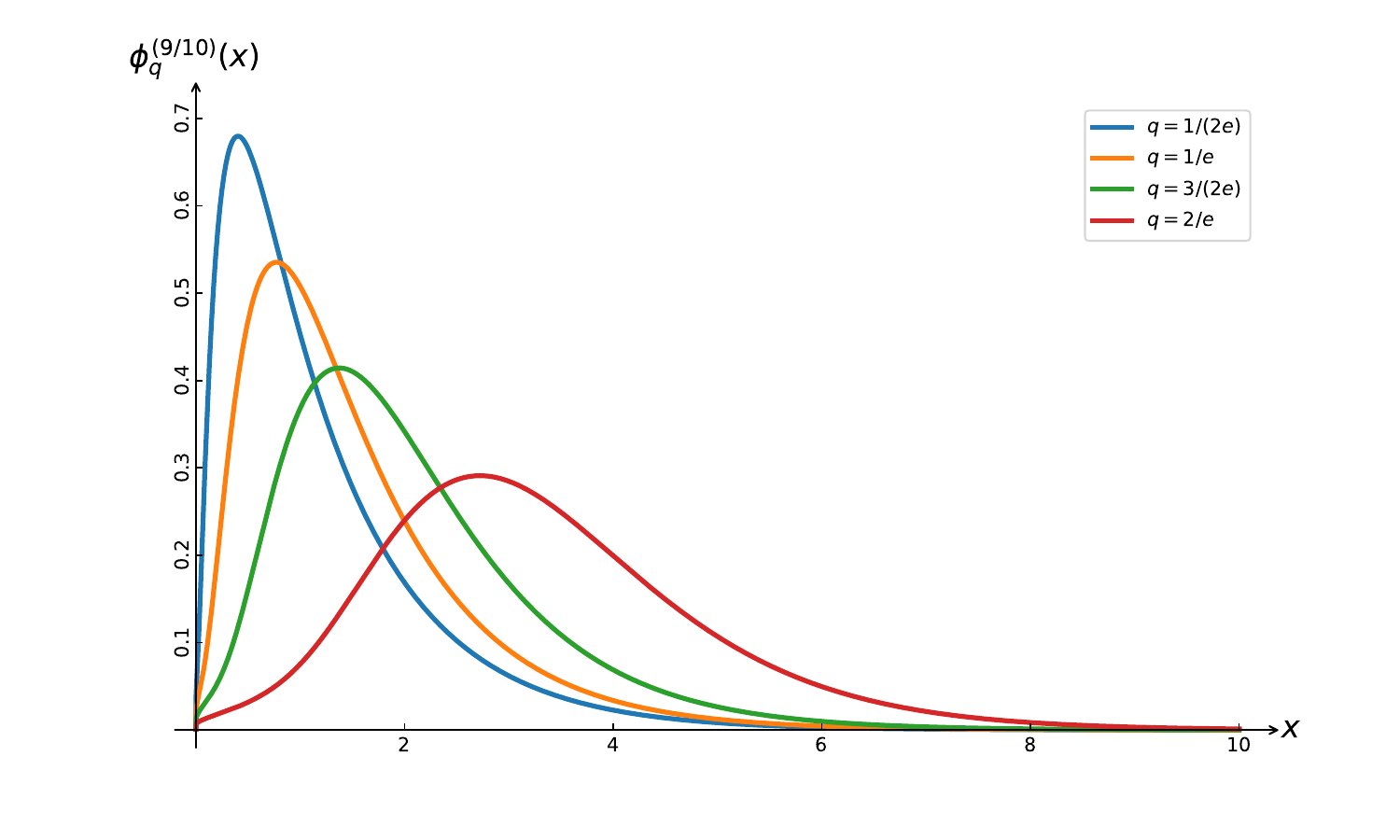}\\
  \caption{Density function of $I_q^{(9/10)}$ (space-fractional Poisson process case)}
  \label{density_phi_2}
\end{figure}

\begin{figure}[H]
  \centering
  \includegraphics[scale=0.33]{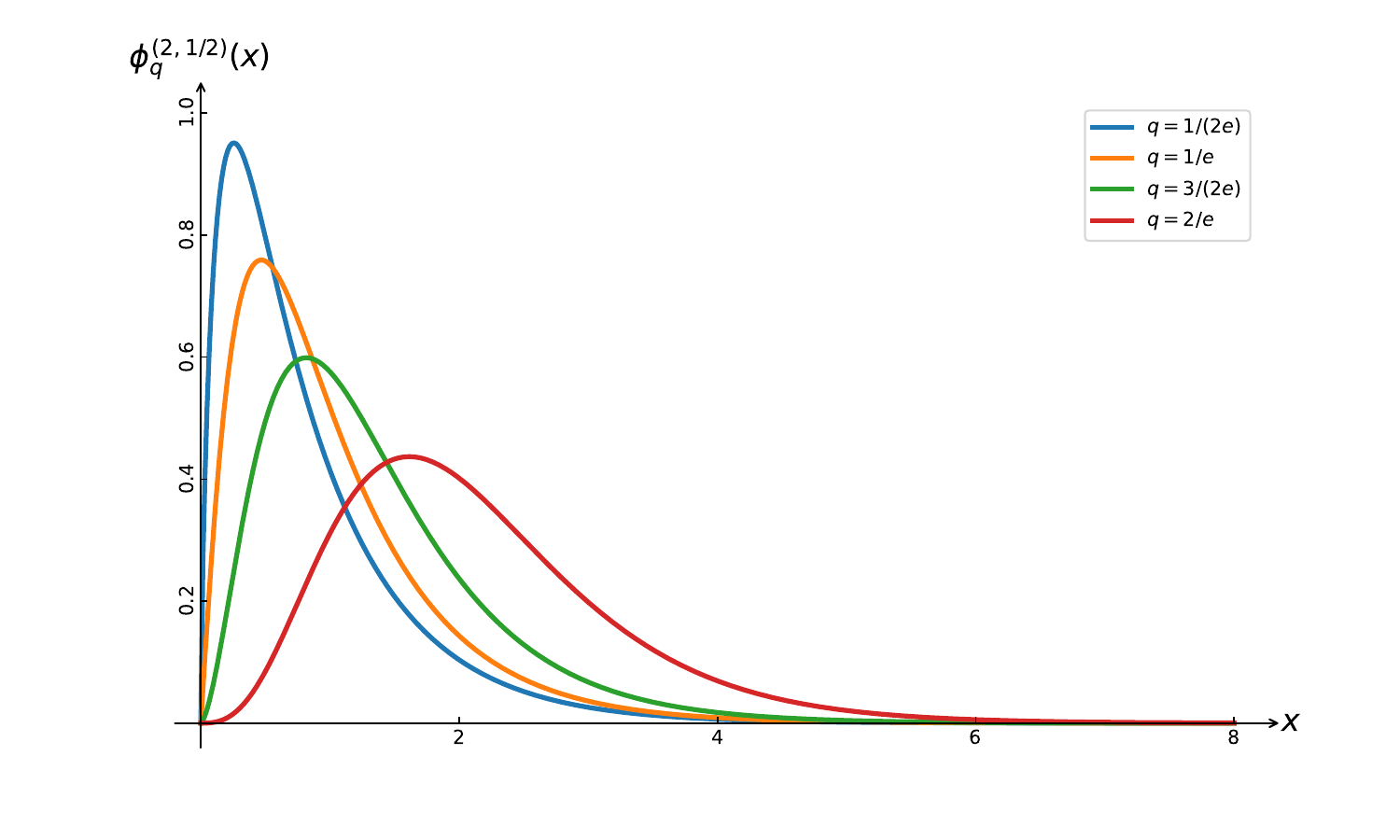}\\
  \caption{Density function of $I_q^{(2,1/2)}$ (negative-binomial process case)}
  \label{density_phi_3}
\end{figure}

In the presence of a positive drift, as the situation becomes considerably more complex, we limit our illustration to the standard exponential functional $q=1/e$, with just two cases: when the IVS is a Poisson process or an MIPP, as considered in Example \ref{e:pdu} and Example \ref{e:mippd}. We focus on four values of the drift coefficient: $\mu\in\{1/3,1/2,1,2\}$.

To apply Theorem \ref{side}, in light of the piecewise nature of the formula (\ref{4.15}), it is reasonable to evaluate the basis function $h_{j}\equiv h_{1/e,j}$ for $j$ up to a threshold $K\geq1$, determined so that the interval $(0,e^{-K}/\mu]$ contains a negligible mass of the density function, by the precise criterion
\begin{equation}\label{dc}
  \frac{\int^{e^{-K}/\mu}_{e^{-(K+1)}/\mu}h_{j}(x)\dd x}{\sum^{K}_{j=0}\int^{e^{-j}/\mu}_{e^{-(j+1)}/\mu}h_{j}(x)\dd x}<10^{-3},
\end{equation}
noting that the density function vanishes outside the interval $(0,1/\mu]$; see again (\ref{4.14}). Exact computation of the functions $h_{j}$, $j\leq K$ is carried out up to $j=3$ based on the formulae derived in Example \ref{e:pdu} and Example \ref{e:mippd} (including those enclosed in apostrophes); then, for all $3<j\leq K$, $h_{j}$ is computed numerically in an iterative manner based on the functional recurrence relation in (\ref{4.17}),\footnote{Note that the recurrence relation involves only proper integrals of positive, continuous, and nonsingular functions, allowing for straightforward numerical evaluation -- using for example the built-in integration routines available in Python's \texttt{scipy.integrate} module.} which allows to easily verify the criterion in (\ref{dc}). We simply set the density function to $0$ on the interval $(0,e^{-K}/\mu]$.

Table \ref{tb:2} reports the lowest index threshold $K$ that satisfies (\ref{dc}) for all $\mu\geq1/3$ in each case, and the density functions are shown in Figure \ref{density_phi_4} and Figure \ref{density_phi_5}. It is evident that these density functions, while continuous, are only guaranteed to be locally (piecewise) differentiable. In particular, in the Poisson case, the right tails of the density functions are all line segments or parabolic or hyperbolic curves, as implied by the first equation in (\ref{4.17}).

\begin{table}[H]\small
  \centering
  \caption{Specification for exponential functionals (drifted)}
  \label{tb:2}
  \begin{tabular}{c|c|c}
    \hline
    IVS ($S$) & parameter choice & index threshold ($K$) \\ \hline
    Poisson process ($N$) & $\lambda=1$ & 4 \\
    MIPP ($V^{(n)}$) & $\lambda=1$, $n=2$ & 5 \\
    \hline
  \end{tabular}
\end{table}

\begin{figure}[H]
  \centering
  \includegraphics[scale=0.33]{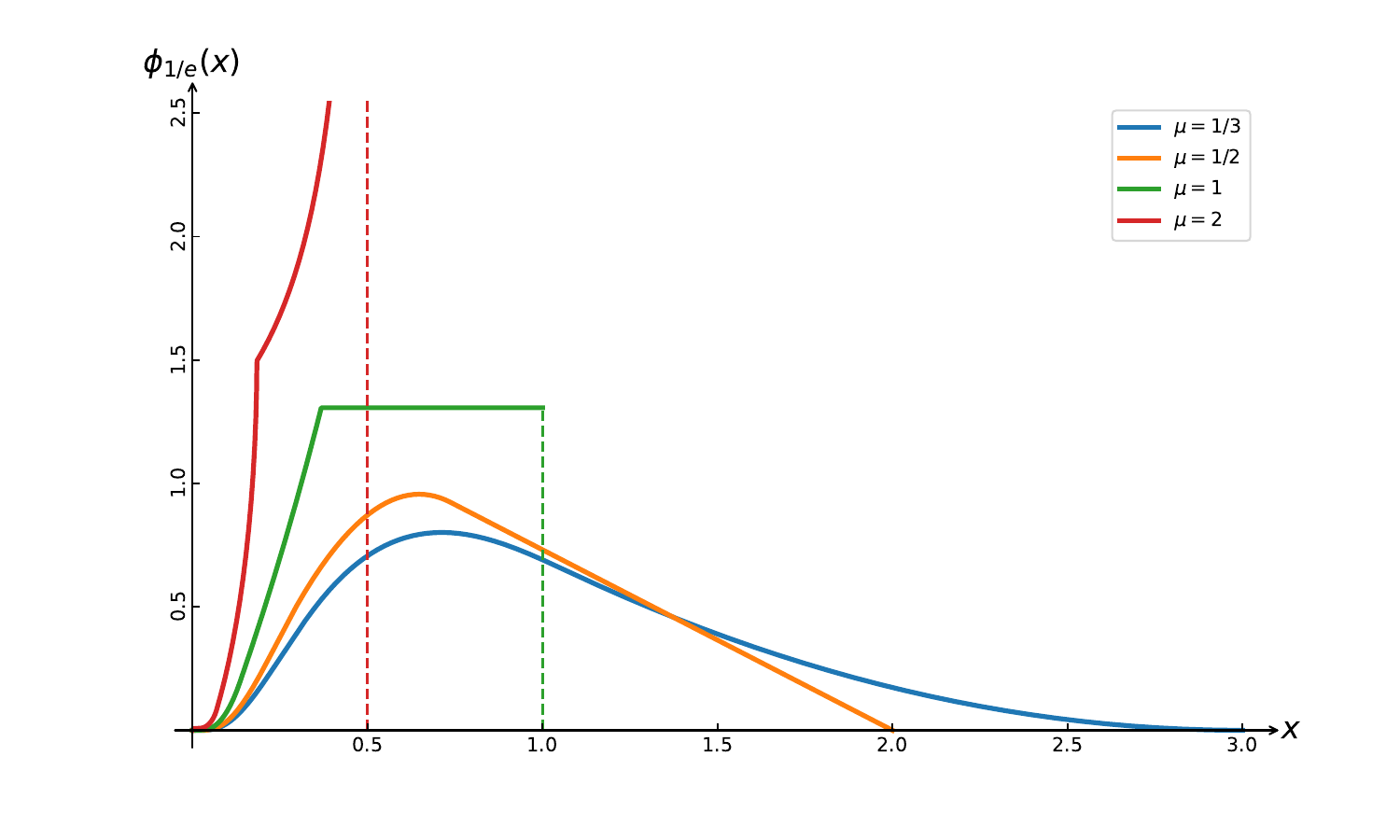}\\
  \caption{Density function of $I\equiv I_{1/e}$ (drifted Poisson case)}
  \label{density_phi_4}
\end{figure}

\begin{figure}[H]
  \centering
  \includegraphics[scale=0.33]{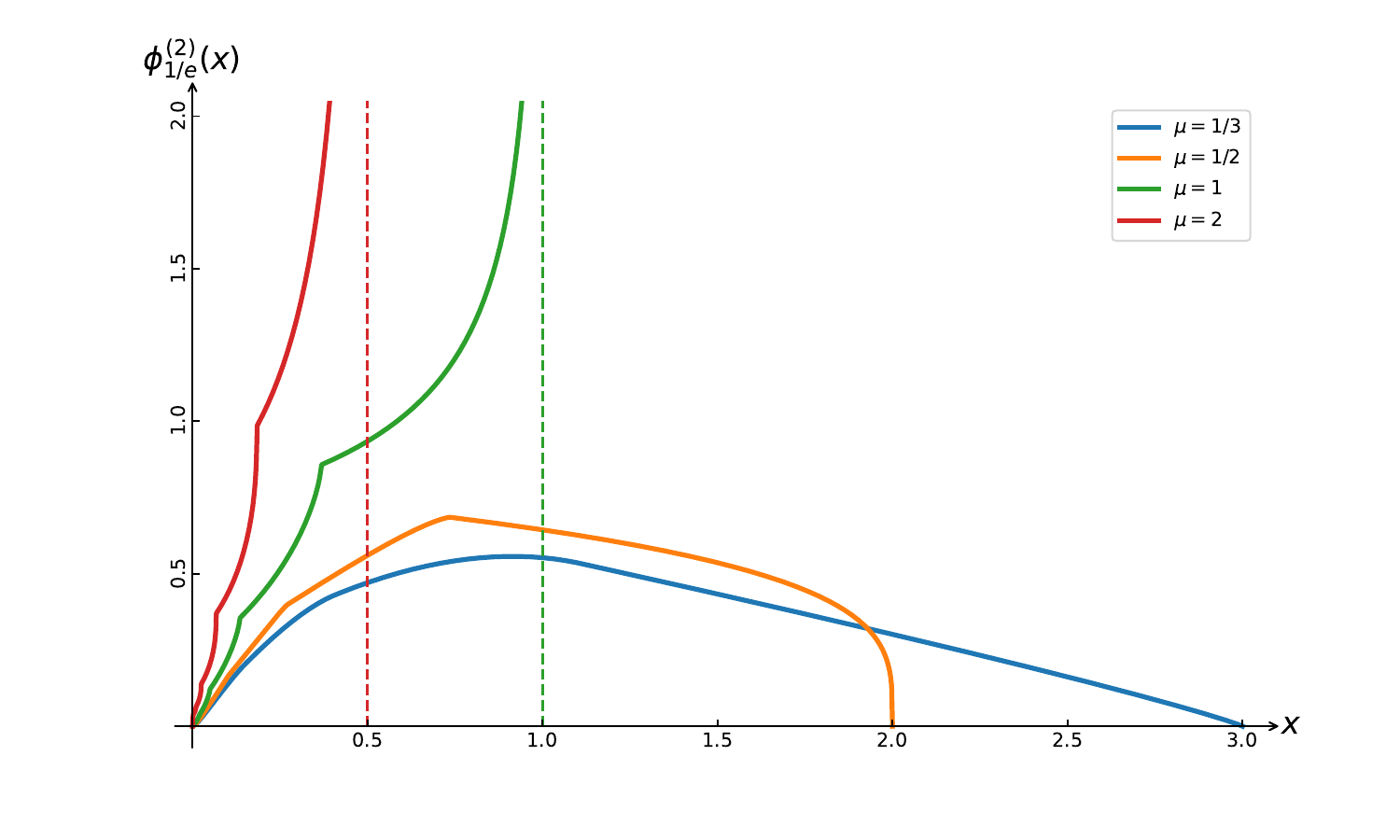}\\
  \caption{Density function of $I^{(2)}\equiv I^{(2)}_{1/e}$ (drifted MIPP case)}
  \label{density_phi_5}
\end{figure}

For illustrating the inverse-power functional of an MIPP, we consider four choices of the power parameter as well: $p\in\{2,3,4,5\}$. Based on the general representation (\ref{approx}) in Theorem \ref{344}, note that since the jump variables satisfy that $\sum^{k}_{i=1}\tilde{Z}_{i}\geq k$, $\PP$-a.s., for any $k\geq1$, one way to set the limit index $K$ in (\ref{5.5}) is by checking whether the log-tail product
\begin{equation*}
  \log\prod^{\infty}_{k=K+2}\frac{\lambda\PP\{Z^{(n-1)}_{1}\geq1\}}{\lambda\PP\{Z^{(n-1)}_{1}\geq1\}+k^{-p}u} =\sum^{\infty}_{k=K+2}\log\frac{\lambda\PP\{Z^{(n-1)}_{1}\geq1\}}{\lambda\PP\{Z^{(n-1)}_{1}\geq1\}+k^{-p}u}
\end{equation*}
is reasonably small (in modulus) for $|u|<\lambda\PP\{Z^{(n-1)}_{1}\geq1\}$, while the $K$ nested series in (\ref{5.5}) can be easily truncated by utilizing the fast convergence of $\PP\{Z^{(n-1)}_{1}=k\}$ towards $0$ with respect to $k\in\mathbb{Z}_{++}$.

We stick with the parametrization $n=2$ and $\lambda=1$ and find that when $K=10$ and $p=2$, $\big|\sum^{\infty}_{k=K+2}\log(\PP\{Z^{(n-1)}_{1}\geq1\}/(\PP\{Z^{(n-1)}_{1}\geq1\}+k^{-p}u))\big|<10^{-1}$ for $|u|<\lambda\PP\{Z^{(n-1)}_{1}\geq1\}$; see (\ref{ipl}). At the same time, the nested series are all truncated with 5 terms because $\PP\{Z^{(1)}_{1}\geq6\}<10^{-3}$. Under this specification, we proceed to plotting the corresponding density function of the inverse-power functional $J^{(2)}_{p}$ in Figure \ref{density_psi}.

\begin{figure}[H]
  \centering
  \includegraphics[scale=0.33]{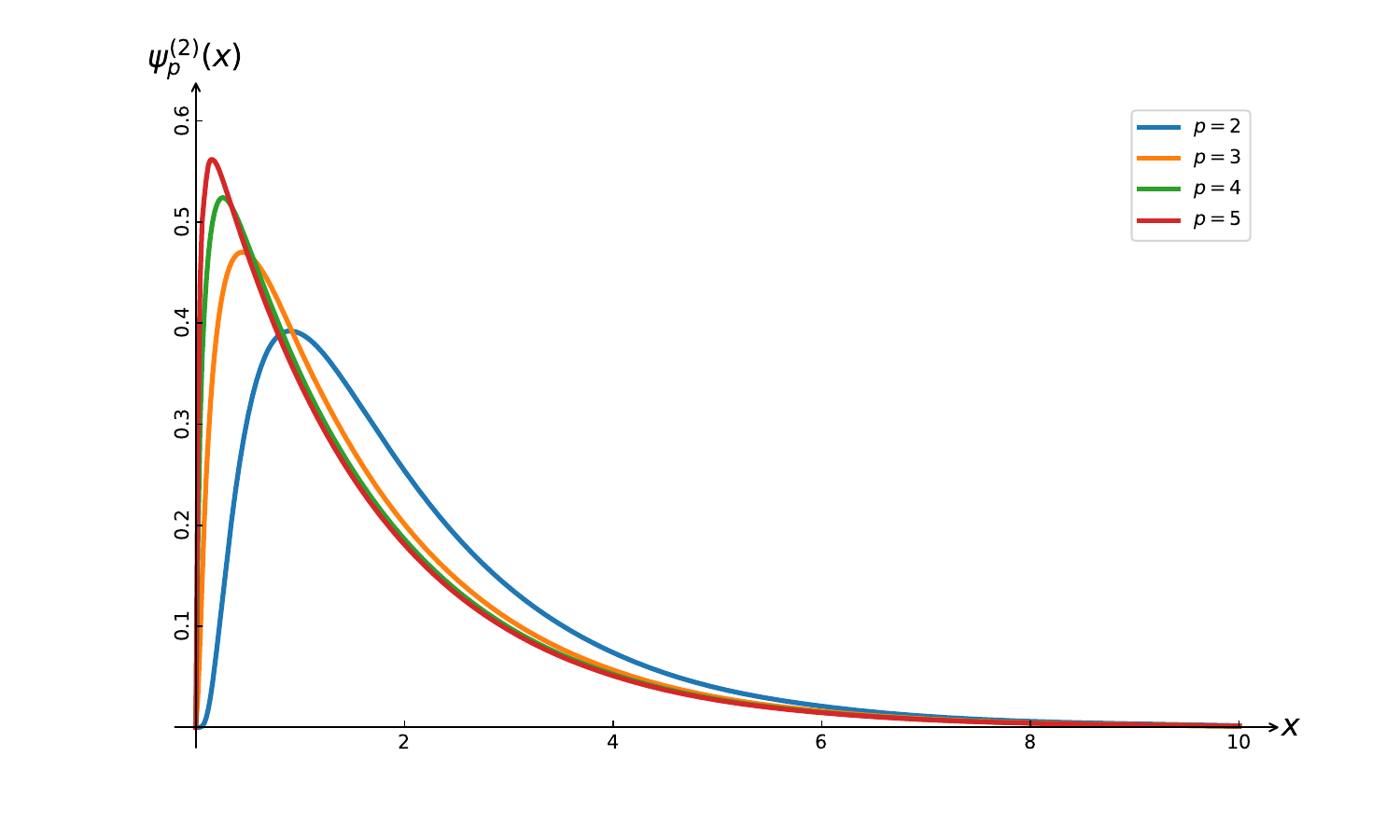}\\
  \caption{Density function of $J_p^{(2)}$ (MIPP case)}
  \label{density_psi}
\end{figure}

\bigskip


\bigskip

\begin{appendices}

\section{Towards exponential functionals of general subordinators}\label{A}

\renewcommand{\theequation}{A.\arabic{equation}}

The proof of Theorem \ref{main} is constructive in the sense that it can be used to derive explicit distributions of the exponential functional of any subordinator whose L\'{e}vy measure is purely atomic. Even further, with the aid of Poisson approximation (e.g., Cont and Tankov \cite[Section 6]{CT03}), it gives rise to limit representations for arbitrary nonnegative pure-jump subordinators. Following the beginning of Section \ref{S:2}, let $X$ be such a process, with L\'{e}vy measure $\nu_{X}$ supported on $\mathbb{R}_{++}$ and the Laplace exponent $\Psi(u)=-\log\E e^{-u X_t}=\int_{0+}^{\infty}(1-e^{-u z})\nu(\dd z)$, $\Re u\geq0$. In this appendix, we consider the standard (with $q=1/e$) exponential functional of $X$,
\begin{equation}\label{ix}
  I_{[X]}:=\int^{\infty}_{0}e^{-X_{t}}\dd t,
\end{equation}
and our goal is to show how Theorem \ref{main} can be applied to construct arbitrarily close approximations of $I_{[X]}$, thereby warranting the desirable limit representations. Again, by Carmona \text{et al.} \cite[Proposition 2.1]{CPY97}, we know that the density function of $I_{[X]}$ exists and denote it as $\phi_{[X]}(x)$, $x>0$.

We need to introduce a sequence of L\'evy processes that converges to $X$ at least in (finite-dimensional) distribution. More precisely, note that by definition, $X$ can be written as
\begin{equation}\label{Xl}
  X_{t}=\int^{\infty}_{0+}zM([0,t],\dd z),\quad t\geq0,
\end{equation}
where $M$ is a Poisson random measure on $\mathbb{R}_{+}\times\mathbb{R}_{++}$. Then, define the Poisson-type processes
\begin{equation}\label{Xel}
  X^{(\epsilon)}_{t}:=\int^{\infty}_{\epsilon}\epsilon\bigg\lfloor\frac{z}{\epsilon}\bigg\rfloor M([0,t],\dd z),\quad t\geq0,\;\epsilon\in(0,1)\cap\mathbb{Q}.
\end{equation}
For every $\epsilon\in(0,1)\cap\mathbb{Q}$, $X^{(\epsilon)}$ is a nonnegative compound Poisson process whose L\'{e}vy measure is given by $\sum^{\infty}_{k=1}\upsilon_{\epsilon,k}\updelta_{\epsilon k}(\dd z)$, for $z>0$, with
\begin{equation}\label{ups}
  \upsilon_{\epsilon,k}:=\nu_{X}([\epsilon k,\epsilon(k+1))).
\end{equation}
This construction immediately implies that $X^{(\epsilon)}_{t}\overset{\rm d}{\to}X_{t}$ as $\epsilon\searrow0$, for every $t\geq0$, and hence the corresponding pointwise convergence of Laplace exponents $\Psi^{(\epsilon)}(u)=-\log\E e^{-uX^{(\epsilon)}_{1}}$ to $\Psi(u)$, $\Re u\geq0$. For each $\epsilon$, we can similarly define the exponential functional $I_{[X^{(\epsilon)}]}:=\int^{\infty}_{0}e^{-X^{(\epsilon)}_{t}}\dd t$, whose density function is denoted as $\phi_{[X^{(\epsilon)}]}(x)$, $x>0$.

We also mention that from Bertoin and Yor \cite[Theorem 2]{BY05}, the moments of $I_{[X]}$ uniquely determine its distribution and are explicitly given by
\begin{equation}\label{mmt}
  \E I^{m}_{[X]}=\frac{m!}{\prod^{m}_{j=1}\Psi(j)},\quad m\in\mathbb{Z}_{+},
\end{equation}
with $\prod^{0}_{j=1}\equiv1$. Using (\ref{mmt}), it is an easy task to establish that $I_{[X^{(\epsilon)}]}\overset{\rm d}{\to}I_{[X]}$ from the moment convergence.



\begin{proposition}\label{pdc}
For each fixed $m\in\mathbb{Z}_{+}$, the family of random variables $\big\{I_{[X^{(\epsilon)}]}^m:\;\epsilon \in (0, 1]\cap \mathbb{Q}\big\}$ is uniformly integrable, and $\E I_{[X^{(\epsilon)}]}^m\to \E I_{[X]}^m$ as $\epsilon\searrow0$.
\end{proposition}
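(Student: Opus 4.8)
The plan is to combine the explicit moment formula (\ref{mmt}) with an elementary estimate on the floor operation defining the approximations $X^{(\epsilon)}$. The key inequality is that $\epsilon\lfloor z/\epsilon\rfloor\geq z/2$ for every $z\geq\epsilon$: if $\epsilon\leq z\leq 2\epsilon$ then $\epsilon\lfloor z/\epsilon\rfloor\geq\epsilon\geq z/2$, and if $z>2\epsilon$ then $\lfloor z/\epsilon\rfloor\geq z/\epsilon-1\geq z/(2\epsilon)$. Alongside the trivial bound $z-\epsilon<\epsilon\lfloor z/\epsilon\rfloor\leq z$ (so that $\epsilon\lfloor z/\epsilon\rfloor\to z$ as $\epsilon\searrow0$), this single estimate powers both halves of the statement.

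For the convergence of moments I would apply (\ref{mmt}) to $X^{(\epsilon)}$ and to $X$, obtaining $\E I^{m}_{[X^{(\epsilon)}]}=m!/\prod_{j=1}^{m}\Psi^{(\epsilon)}(j)$ and $\E I^{m}_{[X]}=m!/\prod_{j=1}^{m}\Psi(j)$. From (\ref{Xel})--(\ref{ups}), $\Psi^{(\epsilon)}(u)=\int_{[\epsilon,\infty)}(1-e^{-u\epsilon\lfloor z/\epsilon\rfloor})\nu_{X}(\dd z)$, whose integrand is dominated by the $\nu_{X}$-integrable function $1-e^{-uz}$ and converges pointwise to it; dominated convergence gives $\Psi^{(\epsilon)}(u)\to\Psi(u)$ for each $u>0$. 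As $X$ is nondeterministic, $\Psi(j)>0$ for every $j\geq1$, so $\prod_{j=1}^{m}\Psi^{(\epsilon)}(j)\to\prod_{j=1}^{m}\Psi(j)>0$ and hence $\E I^{m}_{[X^{(\epsilon)}]}\to\E I^{m}_{[X]}$, settling the second assertion.

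For uniform integrability the case $m=0$ is trivial, so fix $m\geq1$. Since $(I^{m}_{[X^{(\epsilon)}]})^{1+1/m}=I^{m+1}_{[X^{(\epsilon)}]}$, by the de la Vall\'ee--Poussin criterion it suffices to show $\sup_{\epsilon}\E I^{m+1}_{[X^{(\epsilon)}]}<\infty$, equivalently that $\Psi^{(\epsilon)}(j)$ is bounded below uniformly in $\epsilon$ for each $j\leq m+1$. For $z\geq1\geq\epsilon$ the key estimate gives $\epsilon\lfloor z/\epsilon\rfloor\geq z/2$, whence $\Psi^{(\epsilon)}(j)\geq\int_{[1,\infty)}(1-e^{-jz/2})\nu_{X}(\dd z)=:\kappa_{j}>0$, a constant free of $\epsilon$ (assuming $\nu_{X}$ charges $[1,\infty)$; otherwise replace the level $1$ by any $\eta$ with $\nu_{X}([\eta,\infty))>0$ and restrict to $\epsilon\leq\eta$, which is all that matters for $\epsilon\searrow0$). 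Thus $\sup_{\epsilon}\E I^{m+1}_{[X^{(\epsilon)}]}\leq(m+1)!/\prod_{j=1}^{m+1}\kappa_{j}<\infty$, giving the claimed uniform integrability. A more probabilistic route to the same conclusion is to realize $X$ and all the $X^{(\epsilon)}$ on the common Poisson random measure $M$ of (\ref{Xl})--(\ref{Xel}); the floor estimate then yields the pathwise domination $X^{(\epsilon)}_{t}\geq\frac12\int_{[1,\infty)}zM([0,t],\dd z)=:\frac12Y_{t}$, so $I_{[X^{(\epsilon)}]}\leq\int_{0}^{\infty}e^{-Y_{t}/2}\dd t=:W$ with $W$ having finite moments of all orders by (\ref{mmt}) applied to $Y/2$, and the family is then dominated by the integrable $W^{m}$. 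I expect the only genuine obstacle to be this uniform control of $\Psi^{(\epsilon)}(j)$ from below (equivalently, of $I_{[X^{(\epsilon)}]}$ from above); once the inequality $\epsilon\lfloor z/\epsilon\rfloor\geq z/2$ is noted, the rest is routine dominated/monotone convergence together with a standard moment criterion.
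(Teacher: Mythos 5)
Your proposal is correct and follows essentially the same route as the paper: apply the moment formula (\ref{mmt}) to reduce both claims to the behavior of $\Psi^{(\epsilon)}(j)$, get moment convergence from $\Psi^{(\epsilon)}(j)\to\Psi(j)>0$, and get uniform integrability via the de la Vall\'ee--Poussin criterion with the uniformly bounded $(m+1)$-st moments. In fact your explicit uniform lower bound $\Psi^{(\epsilon)}(j)\geq\kappa_{j}>0$ (via $\epsilon\lfloor z/\epsilon\rfloor\geq z/2$, with the restriction to small $\epsilon$ when $\nu_{X}$ does not charge $[1,\infty)$) is more careful than the paper's terse assertion that the sequence of $(m+1)$-st moments is ``clearly bounded,'' and the alternative pathwise domination by $W=\int_{0}^{\infty}e^{-Y_{t}/2}\dd t$ is a valid, slightly stronger variant of the same idea.
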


\begin{proof}
It is enough to check that $\sup_{\epsilon (0,1)\cap\mathbb{Q}}\E I_{[X^{(\epsilon)}]}^{ml}<\infty$ for some $l>1$. Let $l=(m+1)/m$, with $ml=m+1$. Then, by (\ref{mmt}), note that $\E I_{[X^{(\epsilon)}]}^{m+1}=(m+1)!/\prod_{j=1}^{m+1}\Psi^{(\epsilon)}(j)\to (m+1)!/\prod_{j=1}^{m+1} \Psi(j)$ as $\epsilon \searrow 0$, and $\{\E I_{[X^{(\epsilon)}]}^{m+1}:\;\epsilon \in (0, 1)\cap \mathbb{Q}\}$ is clearly a bounded sequence.
\end{proof}

The following theorem demonstrates that stronger convergence in $\mathrm{L}^{1}$ holds, which is a somewhat subtle result enabling the establishment of some distributional convergence rates in Corollary \ref{cor:main}.

\begin{theorem}\label{L1}
As $\epsilon\searrow0$, $\E|I_{[X]}-I_{[X^{(\epsilon)}]}|\to0$.
\end{theorem}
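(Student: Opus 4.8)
The plan is to exploit two features of the construction (\ref{Xel}): it is driven by the \emph{same} Poisson random measure $M$ as $X$ in (\ref{Xl}), and it is a one-sided (under-)approximation of $X$. Together these collapse the $\mathrm{L}^{1}$ estimate into a difference of first moments, which is already under control via (\ref{mmt}) and Proposition \ref{pdc}.

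First I would record the pathwise domination $X^{(\epsilon)}_{t}\leq X_{t}$ for every $t\geq0$, $\PP$-a.s. Indeed, since $\epsilon\lfloor z/\epsilon\rfloor\leq z$ for all $z\geq\epsilon$, and the jumps of size $z\in(0,\epsilon)$ are discarded outright in (\ref{Xel}), each atom $(s,z)$ of $M$ with $s\leq t$ contributes no more to $X^{(\epsilon)}_{t}$ than to $X_{t}$; summing over atoms gives the inequality. Hence $e^{-X^{(\epsilon)}_{t}}\geq e^{-X_{t}}$ for every $t$, and integrating in $t$ yields $I_{[X^{(\epsilon)}]}\geq I_{[X]}$, $\PP$-a.s. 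In particular the absolute value disappears:
\begin{equation*}
  \E\big|I_{[X]}-I_{[X^{(\epsilon)}]}\big|=\E\big(I_{[X^{(\epsilon)}]}-I_{[X]}\big)=\E I_{[X^{(\epsilon)}]}-\E I_{[X]},
\end{equation*}
where both expectations are finite by (\ref{mmt}) with $m=1$, namely $\E I_{[X]}=1/\Psi(1)$ and $\E I_{[X^{(\epsilon)}]}=1/\Psi^{(\epsilon)}(1)$, positive and finite since $X$ is nondeterministic.

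It then remains to let $\epsilon\searrow0$ in the last display, i.e. to show $\E I_{[X^{(\epsilon)}]}\to\E I_{[X]}$; but this is precisely Proposition \ref{pdc} with $m=1$. Alternatively, one checks it directly: writing $\Psi^{(\epsilon)}(1)=\int_{\epsilon}^{\infty}\big(1-e^{-\epsilon\lfloor z/\epsilon\rfloor}\big)\,\nu_{X}(\dd z)$, the integrand converges pointwise on $(0,\infty)$ to $1-e^{-z}$ and is dominated by $\min(1,z)\in\mathrm{L}^{1}(\nu_{X})$, so dominated convergence gives $\Psi^{(\epsilon)}(1)\to\int_{0+}^{\infty}(1-e^{-z})\,\nu_{X}(\dd z)=\Psi(1)$, whence $\E I_{[X^{(\epsilon)}]}=1/\Psi^{(\epsilon)}(1)\to1/\Psi(1)=\E I_{[X]}$. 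Combining the two displays gives $\E|I_{[X]}-I_{[X^{(\epsilon)}]}|\to0$.

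The only genuinely non-routine step is the pathwise comparison $X^{(\epsilon)}\leq X$ — the observation that the floor-rounding construction undershoots $X$ along every trajectory. Once this is in hand, the $\mathrm{L}^{1}$ convergence is no deeper than the convergence of the explicit means $1/\Psi^{(\epsilon)}(1)\to1/\Psi(1)$; this is exactly what makes the result a mild reinforcement of the distributional convergence noted after (\ref{mmt}) rather than a substantially harder fact.
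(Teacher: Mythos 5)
Your proof is correct, and it takes a genuinely different (and more economical) route than the paper's. You push the pathwise ordering one step further than the paper does: from $X^{(\epsilon)}_t\le X_t$ (which the paper also records via (\ref{pab})) you deduce $I_{[X^{(\epsilon)}]}\ge I_{[X]}$ $\PP$-a.s., so the $\mathrm{L}^1$ distance collapses to a difference of first moments, $\E I_{[X^{(\epsilon)}]}-\E I_{[X]}=1/\Psi^{(\epsilon)}(1)-1/\Psi(1)$, and the conclusion follows from $\Psi^{(\epsilon)}(1)\to\Psi(1)$, which you verify both by dominated convergence and by invoking Proposition \ref{pdc} with $m=1$; the only (trivial) caveat is that applying (\ref{mmt}) to $X^{(\epsilon)}$ requires $\Psi^{(\epsilon)}(1)>0$, i.e.\ $\nu_X([\epsilon,\infty))>0$, which holds for all sufficiently small $\epsilon$ and is all that matters as $\epsilon\searrow0$. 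The paper instead never uses the monotonicity of the functionals themselves: it bounds $\int_0^\infty\E|e^{-X_t}-e^{-X^{(\epsilon)}_t}|\,\dd t$ via the Lipschitz property of the exponential and a mean/covariance (Cauchy--Schwarz) split, and works the pieces into the explicit estimate $\E|I_{[X]}-I_{[X^{(\epsilon)}]}|\le(C_1+C_2)\varrho^2(\epsilon)$ with $\varrho$ as in (\ref{vrho}). What that extra labor buys is precisely the quantitative bound, which is reused in the proof of Corollary \ref{cor:main} (the choice $\eta=\varrho(\epsilon)$ in the Markov-inequality step) to obtain the $O(\varrho(\epsilon))$ remainder; your argument proves the theorem as stated but not, by itself, the rate. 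It can, however, be upgraded cheaply: writing $1/\Psi^{(\epsilon)}(1)-1/\Psi(1)=(\Psi(1)-\Psi^{(\epsilon)}(1))/(\Psi(1)\Psi^{(\epsilon)}(1))$ and using $|e^{-a}-e^{-b}|\le|a-b|$ for $a,b\ge0$ gives $\Psi(1)-\Psi^{(\epsilon)}(1)\le\int_0^\epsilon z\,\nu_X(\dd z)+\epsilon\,\nu_X([\epsilon,\infty))=\varrho^2(\epsilon)$, so your identity yields $\E|I_{[X]}-I_{[X^{(\epsilon)}]}|\le\varrho^2(\epsilon)/(\Psi(1)\Psi^{(\epsilon)}(1))$, recovering the same rate as the paper with less machinery.
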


\begin{proof}
Taking the difference of (\ref{Xl}) and (\ref{Xel}),
\begin{equation}\label{pab}
  X_{t}-X^{(\epsilon)}_{t}=\int^{\infty}_{\epsilon}\bigg(z-\epsilon\bigg\lfloor\frac{z}{\epsilon}\bigg\rfloor\bigg)M([0,t],\dd z)+\int^{\epsilon}_{0+}zM([0,t],\dd z),\quad t\geq0.
\end{equation}
The equality in (\ref{pab}) particularly shows that $\PP$-a.s., $X^{(\epsilon)}_{t}\leq X_{t}$ for any $t\geq0$ and $X^{(\epsilon)}_{t}\nearrow X_{t}$ as $\epsilon\searrow0$. It thus follows from (\ref{ix}) that
\begin{align}\label{l1b1}
  \E|I_{[X]}-I_{[X^{(\epsilon)}]}|&\leq\int^{\infty}_{0}\E\big|e^{-X_{t}}-e^{-X^{(\epsilon)}_{t}}\big|\dd t \nonumber\\
  &\leq\int^{\infty}_{0}\E\big(e^{-X^{(\epsilon)}_{t}}|X_{t}-X^{(\epsilon)}_{t}|\big)\dd t \nonumber\\
  &\leq\int^{\infty}_{0}\Big(\E e^{-X^{(\epsilon)}_{t}}\E|X_{t}-X^{(\epsilon)}_{t}| +\sqrt{\Var(e^{-X^{(\epsilon)}_{t}})\Var(X_{t}-X^{(\epsilon)}_{t})}\Big)\dd t,
\end{align}
where the second inequality uses the Lipschitz property of the exponential function. In (\ref{l1b1}), by the infinite divisible distribution of $X^{(\epsilon)}_{1}$ and (\ref{pab}), we have further that
\begin{align}\label{l1b2}
  \mathcal{E}_{1}&:=\int^{\infty}_{0}\E e^{-X^{(\epsilon)}_{t}}\E|X_{t}-X^{(\epsilon)}_{t}|\dd t \nonumber\\
   &=\int^{\infty}_{0}e^{-t\Psi^{(\epsilon)}(1)} \E(X_{t}-X^{(\epsilon)}_{t})\dd t \nonumber\\
  &\leq\int^{\infty}_{0}te^{-t\Psi^{(\epsilon)}(1)}\dd t\bigg(\epsilon\nu_{X}([\epsilon,\infty))+\int^{\epsilon}_{0}z\nu_{X}(\dd z)\bigg) \nonumber\\
  &\leq C_{1}\int^{\epsilon}_{0}\nu_{X}([z,\infty))\dd z \nonumber\\
  &=C_{1}\varrho^{2}(\epsilon),
\end{align}
where
\begin{equation}\label{vrho}
  \varrho(\epsilon):=\sqrt{\int^{\epsilon}_{0}\nu_{X}([z,\infty))\dd z}
\end{equation}
and $C_{1}$ is a constant, both depending only on $\nu_{X}$; in particular, by the above monotonicity of $X^{(\epsilon)}_{1}$ in $\epsilon\in(0,1)\cap\mathbb{Q}$,
\begin{equation*}
  C_{1}=\int^{\infty}_{0}te^{-t\Psi^{(1)}(1)}\dd t=\frac{1}{(\Psi^{(1)}(1))^{2}}>0.
\end{equation*}
Also, the last equality in (\ref{l1b2}) uses integration-by-parts for Lebesgue--Stieltjes integrals. Similarly,
\begin{align}\label{l1b3}
  \mathcal{E}_{2}&:=\int^{\infty}_{0}\sqrt{\Var(e^{-X^{(\epsilon)}_{t}})\Var(X_{t}-X^{(\epsilon)}_{t})}\dd t \nonumber\\
  &\leq\int^{\infty}_{0}e^{-t\Psi^{(\epsilon)}(2)/2}\sqrt{\Var(X_{t}-X^{(\epsilon)}_{t})}\dd t \nonumber\\
  &\leq\int^{\infty}_{0}\sqrt{t}e^{-t\Psi^{(\epsilon)}(2)/2}\dd t\sqrt{\epsilon^{2}\nu_{X}([\epsilon,\infty))+\int^{\epsilon}_{0}z^{2}\nu_{X}(\dd z)} \nonumber\\
  &\leq\int^{\infty}_{0}\sqrt{t}e^{-t\Psi^{(\epsilon)}(2)/2}\dd t\sqrt{\epsilon\bigg(\epsilon\nu_{X}([\epsilon,\infty))+\int^{\epsilon}_{0}z\nu_{X}(\dd z)\bigg)} \nonumber\\
  &\leq C_{2}\varrho^{2}(\epsilon),
\end{align}
with a constant
\begin{equation*}
  C_{2}=\int^{\infty}_{0}\sqrt{t}e^{-t\Psi^{(1)}(2)/2}\dd t=\frac{\sqrt{2\pi}}{(\Psi^{(1)}(2))^{3/2}}>0.
\end{equation*}

Plugging (\ref{l1b2}) and (\ref{l1b3}) into (\ref{l1b1}) gives that
\begin{equation}\label{l1b}
  \E|I_{[X]}-I_{[X^{(\epsilon)}]}|=\mathcal{E}_{1}+\mathcal{E}_{2}\leq(C_{1}+C_{2})\varrho^{2}(\epsilon).
\end{equation}
Since $\nu_{X}([\epsilon,\infty))=o(1/\epsilon)$ as $\epsilon\searrow0$ (for subordinators), $\lim_{\epsilon\searrow0}\varrho^{2}(\epsilon)=\lim_{\epsilon\searrow0}\int^{\epsilon}_{0}\nu_{X}([z,\infty))\dd z=0$, thus proving the claim.
\end{proof}

As a direct consequence from Corollary \ref{cor:cdf}, the cumulative distribution functions of $I_{[X^{\epsilon}]}$, $\epsilon\in(0,1)\cap\mathbb{Q}$, can all be written explicitly. Then, we can establish the following result for the distributional convergence.

\begin{corollary}\label{cor:main}
As $\epsilon\searrow0$, it holds that\footnote{Some exact remainder estimates are in the proof of the corollary. Also, $O(\varrho(\epsilon))=O(\sqrt{\epsilon})$ whenever $\nu_{X}$ is a finite measure, i.e., when $X$ is a compound Poisson process.}
\begin{equation}\label{4.11}
  \PP\{I_{[X]}\leq x\}=1-\frac{1}{\sum^{\infty}_{j=0}c_{\epsilon,j}e^{-\epsilon j}}\sum^{\infty}_{j=0}c_{\epsilon,j}e^{-\epsilon j}\exp\bigg(-e^{\epsilon j}x\sum^{\infty}_{k=1}\upsilon_{\epsilon,k}\bigg)+O(\varrho(\epsilon)),
\end{equation}
uniformly in $x>0$, where $\upsilon_{\epsilon,k}$ and $\varrho(\epsilon)$ are as defined in (\ref{ups}) and (\ref{vrho}), and the coefficients $c_{\epsilon,j}$'s are determined by
\begin{equation}\label{4.12}
  c_{\epsilon,0}=1,\quad c_{\epsilon,j}=\frac{1}{(1-e^{\epsilon j})\sum^{\infty}_{k=1}\upsilon_{\epsilon,k}}\sum^{j}_{k=1}e^{\epsilon k}c_{\epsilon,j-k}\upsilon_{\epsilon,k},\quad j\in\mathbb{Z}_{++}.
\end{equation}
\end{corollary}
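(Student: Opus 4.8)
The plan is to recognize the explicit series in (\ref{4.11}) as the \emph{exact} cumulative distribution function of $I_{[X^{(\epsilon)}]}$, and then to transfer it to $I_{[X]}$ by a quantitative Kolmogorov‑distance estimate powered by the $\mathrm{L}^{1}$‑bound of Theorem~\ref{L1}.

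\emph{Identification.} For each $\epsilon\in(0,1)\cap\mathbb{Q}$, the rescaled process $X^{(\epsilon)}/\epsilon$ is an IVS: a compound Poisson process with intensity $\lambda_{\epsilon}=\sum_{k\geq1}\upsilon_{\epsilon,k}=\nu_{X}([\epsilon,\infty))$ and jump law $\PP\{Z_{1}=k\}=\upsilon_{\epsilon,k}/\lambda_{\epsilon}$ on $\mathbb{Z}_{++}$, so that $\PP\{Z_{1}\geq1\}=1$. Since $e^{-X^{(\epsilon)}_{t}}=q^{X^{(\epsilon)}_{t}/\epsilon}$ with $q:=e^{-\epsilon}\in(0,1)$, we have $I_{[X^{(\epsilon)}]}=I_{q}$ in the notation of Section~\ref{S:3.1}, and Corollary~\ref{cor:cdf} applies verbatim: substituting $q^{-j}=e^{\epsilon j}$ and $\lambda\PP\{Z_{1}\geq1\}=\sum_{k}\upsilon_{\epsilon,k}$ into (\ref{4.5}) and (\ref{4.cdf}) produces precisely the explicit part of (\ref{4.11}), with the coefficients characterized by (\ref{4.12}). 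It therefore remains to bound $\sup_{x>0}\lvert\PP\{I_{[X]}\leq x\}-\PP\{I_{[X^{(\epsilon)}]}\leq x\}\rvert$ by a constant multiple of $\varrho(\epsilon)$.

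\emph{Transfer.} As already observed in the proof of Theorem~\ref{L1}, the common Poisson‑random‑measure construction (\ref{Xl})--(\ref{Xel}) gives $X^{(\epsilon)}_{t}\leq X_{t}$ for all $t\geq0$ $\PP$‑a.s., hence $I_{[X^{(\epsilon)}]}\geq I_{[X]}$ $\PP$‑a.s. Thus $\{I_{[X^{(\epsilon)}]}\leq x\}\subseteq\{I_{[X]}\leq x\}$, so the CDF difference is nonnegative and equals $\PP\{I_{[X]}\leq x<I_{[X^{(\epsilon)}]}\}$. For any $\delta>0$, splitting on $\{I_{[X^{(\epsilon)}]}-I_{[X]}\leq\delta\}$ and applying Markov's inequality,
\[
  \PP\{I_{[X]}\leq x<I_{[X^{(\epsilon)}]}\}\leq\PP\{x-\delta<I_{[X]}\leq x\}+\PP\{I_{[X^{(\epsilon)}]}-I_{[X]}>\delta\}\leq\|\phi_{[X]}\|_{\infty}\,\delta+\frac{\E\lvert I_{[X]}-I_{[X^{(\epsilon)}]}\rvert}{\delta}.
\]
Inserting $\E\lvert I_{[X]}-I_{[X^{(\epsilon)}]}\rvert\leq(C_{1}+C_{2})\varrho^{2}(\epsilon)$ from (\ref{l1b}) and optimizing over $\delta$ (i.e.\ $\delta=\varrho(\epsilon)\sqrt{(C_{1}+C_{2})/\|\phi_{[X]}\|_{\infty}}$) yields the uniform remainder $2\sqrt{\|\phi_{[X]}\|_{\infty}(C_{1}+C_{2})}\,\varrho(\epsilon)$, which is $O(\sqrt{\epsilon})$ when $\nu_{X}$ is finite since then $\varrho^{2}(\epsilon)=\int_{0}^{\epsilon}\nu_{X}([z,\infty))\,\dd z\leq\epsilon\,\nu_{X}((0,\infty))$. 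Combined with the identification step, this is exactly (\ref{4.11})--(\ref{4.12}).

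\emph{Main obstacle.} The only nontrivial input is the boundedness of the density $\phi_{[X]}$ of $I_{[X]}$. I would deduce it from the Fokker--Planck equation satisfied by $\phi_{[X]}$ -- namely (\ref{4.3}) with $\mu_{q}=0$ and $\nu_{q}$ replaced by $\nu_{X}$, i.e.\ $\phi_{[X]}(x)=\int_{x}^{\infty}\nu_{X}((\log(y/x),\infty))\phi_{[X]}(y)\,\dd y$. Splitting the integral at $y=xe^{\delta}$, the far part is dominated by $\nu_{X}((\delta,\infty))<\infty$ (times $\int_{0}^{\infty}\phi_{[X]}=1$), while the near part is at most $x\bigl(\sup_{[x,xe^{\delta}]}\phi_{[X]}\bigr)\int_{0}^{\delta}\nu_{X}((r,\infty))e^{r}\,\dd r$ with $\int_{0}^{\delta}\nu_{X}((r,\infty))e^{r}\,\dd r\to0$ as $\delta\searrow0$ by $\int(1\wedge z)\,\nu_{X}(\dd z)<\infty$; this gives a self‑improving (Gronwall‑type) bound on $\phi_{[X]}$ over compact $x$‑ranges, supplemented near $x=0$ by $\PP\{I_{[X]}\leq x\}\leq\PP\{X_{ex}\geq1\}\leq e\Psi(1)x/(1-e^{-1})$ (from $I_{[X]}\geq e^{-1}\inf\{t\colon X_{t}\geq1\}$) and near $x=\infty$ by the integrability of $\phi_{[X]}$ together with the same kernel estimate. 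Alternatively this boundedness may be imported from Pardo et al.~\cite{PRVS13} or Minchev and Savov~\cite{MS23}. Everything else is routine bookkeeping and a standard coupling argument.
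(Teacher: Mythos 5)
Your proposal is correct and follows essentially the same route as the paper: the explicit series is identified as the exact CDF of $I_{[X^{(\epsilon)}]}$ (the paper re-solves the Fokker--Planck equation for $X^{(\epsilon)}$ exactly as in Theorem~\ref{main}, which is equivalent to your rescaling $q=e^{-\epsilon}$ plus Corollary~\ref{cor:cdf}), and the Kolmogorov-distance remainder is obtained by splitting on $\{|I_{[X]}-I_{[X^{(\epsilon)}]}|\leq\eta\}$, applying Markov's inequality with the $\mathrm{L}^{1}$-bound (\ref{l1b}), and choosing $\eta\asymp\varrho(\epsilon)$. Your one-sided coupling observation and $\delta$-optimization only sharpen constants, and for the boundedness of $\phi_{[X]}$ the paper simply imports continuity and the vanishing tails from Pardo \emph{et al.}\ \cite{PRVS13}, exactly as in your stated fallback, so the Gronwall-type sketch is not needed.
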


\begin{proof}
Using the $\mathrm{L}^{1}$-bound in (\ref{l1b}) and the Markov inequality, we have that for any $\eta>0$,
\begin{align*}
  \sup_{x>0}|\PP\{I_{[X]}\leq x\}-\PP\{I_{[X^{(\epsilon)}]}\leq x\}|&\leq\sup_{x>0}\E\big(|\mathbf{1}_{\{I_{[X]}\leq x\}}-\mathbf{1}_{\{I_{[X^{(\epsilon)}]}\leq x\}}| \mathbf{1}_{\{|I_{[X]}-I_{[X^{(\epsilon)}]}|\leq\eta\}}\big) \\
  &\qquad+\PP\{|I_{[X]}-I_{[X^{(\epsilon)}]}|>\eta\} \\
  &\leq\sup_{x>0}\PP\{x-\eta<I_{[X]}\leq x+\eta\}+\eta^{-1}\E|I_{[X]}-I_{[X^{(\epsilon)}]}| \\
  &\leq2\eta\sup_{y>0}\phi_{[X]}(y)+\eta^{-1}(C_{1}+C_{2})\varrho^{2}(\epsilon),
\end{align*}
where $\mathbf{1}_{\{\cdot\}}$ denotes the indicator function, and $\sup_{y>0}\phi_{[X]}(y)<\infty$ because $\phi_{[X]}$ is continuous with $\lim_{x\searrow0}\phi_{[X]}(x)=0$ (Pardo \text{et al.} \cite[Theorem 2.6]{PRVS13}). We then set $\eta=\varrho(\epsilon)$, which gives
\begin{equation}\label{cdf.e}
  |\PP\{I_{[X]}\leq x\}-\PP\{I_{[X^{(\epsilon)}]}\leq x\}|\leq\tilde{C}\varrho(\epsilon),\quad x>0,
\end{equation}
for some positive constant $\tilde{C}$ depending only on $\nu_{X}$.

Given the estimate (\ref{cdf.e}), it remains to note that the Fokker--Planck equation governing the density function $\phi_{[X^{(\epsilon)}]}$ for $\epsilon\in(0,1)\cap\mathbb{Q}$ is
\begin{equation*}
  \frac{\dd}{\dd x}\phi_{[X^{(\epsilon)}]}(x)-\sum^{\infty}_{k=1}(e^{\epsilon k}\phi_{[X^{(\epsilon)}]}(e^{\epsilon k}x)-\phi_{[X^{(\epsilon)}]}(x))\upsilon_{\epsilon,k}=0,\quad x>0,
\end{equation*}
which can be solved (subject to the same boundary conditions) in the same manner as (\ref{4.6}) for
\begin{equation}\label{apdf}
  \phi_{[X^{(\epsilon)}]}(x)=\frac{\sum^{\infty}_{k=1}\upsilon_{\epsilon,k}}{\sum^{\infty}_{j=0}c_{\epsilon,j}e^{-\epsilon j}}\sum^{\infty}_{j=0}c_{\epsilon,j}\exp\bigg(-e^{\epsilon j}x\sum^{\infty}_{k=1}\upsilon_{\epsilon,k}\bigg),\quad x>0,
\end{equation}
where $c_{\epsilon,j}$'s satisfy (\ref{4.12}), and $\sum^{\infty}_{k=1}\upsilon_{\epsilon,k}<\infty$. Integrating (\ref{apdf}) termwise with the Fubini theorem then gives
\begin{equation*}
  \PP\{I_{[X^{(\epsilon)}]}\leq x\}=1-\frac{1}{\sum^{\infty}_{j=0}c_{\epsilon,j}e^{-\epsilon j}}\sum^{\infty}_{j=0}c_{\epsilon,j}e^{-\epsilon j}\exp\bigg(-e^{\epsilon j}x\sum^{\infty}_{k=1}\upsilon_{\epsilon,k}\bigg),\quad x>0,
\end{equation*}
completing the proof.
\end{proof}

From (\ref{4.11}) we observe that for each $\epsilon\in(0,1)$, the cumulative distribution function of $I_{[X^{(\epsilon)}]}$ is a real-weighted sum of the cumulative distribution functions for a sequence of exponentially distributed random variables with parameters $e^{\epsilon j}\sum^{\infty}_{k=1}\upsilon_{\epsilon,k}$, $j\in\mathbb{Z}_{+}$, respectively, for which the corresponding weights are $c_{\epsilon j}e^{-\epsilon j}\big/\sum^{\infty}_{j=0}c_{\epsilon,j}e^{-\epsilon j}$. Therefore, for any subordinator $X$, the distribution of $I_{[X]}$ (from (\ref{ix})) can be interpreted in terms of the $\epsilon$-limit of such a weighted sum.

Having proved that $I_{[X^{(\epsilon)}]}\overset{\rm d}{\to}I_{[X]}$ as $\epsilon\searrow0$, we now turn to the convergence of the associated density functions. Since all of these density functions are known to be infinitely differentiable (recall Carmona \text{et al.} \cite[Proposition 2.1]{CPY97}), it is reasonable to expect the density function $\phi_{[X^{(\epsilon)}]}$ of $I_{{[X^{(\epsilon)}]}}$ to converge pointwise to that $\phi_{[X]}$ of $I_{[X]}$ accordingly. However, it is rather unmanageable to establish this result directly from Corollary \ref{cor:main}, as little is known about the boundedness of the derivatives of the density functions. Instead, we follow a distinct path invoking useful moment connections and related bounds for the Laplace transforms, which leads us to Theorem \ref{main2} below.

\begin{theorem}\label{main2}
As $\epsilon\searrow0$, it holds for every $x>0$ that $\phi_{[X^{(\epsilon)}]}(x)\to\phi_{[X]}(x)$, where
\begin{equation}\label{4.11d}
  \phi_{[X^{\epsilon}]}(x)=\frac{\sum^{\infty}_{k=1}\upsilon_{\epsilon,k}}{\sum^{\infty}_{j=0}c_{\epsilon,j}e^{-\epsilon j}}\sum^{\infty}_{j=0}c_{\epsilon,j}\exp\bigg(-e^{\epsilon j}x\sum^{\infty}_{k=1}\upsilon_{\epsilon,k}\bigg),
\end{equation}
with the coefficients $c_{\epsilon,j}$'s determined by (\ref{4.12}).
\end{theorem}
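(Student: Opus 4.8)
The plan is to exploit the fact that, by Carmona et al.\ \cite[Proposition 2.1]{CPY97}, both $\phi_{[X^{(\epsilon)}]}$ and $\phi_{[X]}$ solve the drift-free Fokker--Planck equation, i.e.\ the $\mu_{q}=0$ instance of (\ref{4.3}) with the relevant (purely atomic, resp.\ general) L\'evy measure; integrating that equation by parts against the tail function of the L\'evy measure turns it into the representation $\phi_{[X^{(\epsilon)}]}(x)=\int_{0+}^{\infty}\big(F_{\epsilon}(xe^{z})-F_{\epsilon}(x)\big)\,\nu^{(\epsilon)}(\dd z)$, where $F_{\epsilon}(\cdot)=\PP\{I_{[X^{(\epsilon)}]}\leq\cdot\}$, and likewise $\phi_{[X]}(x)=\int_{0+}^{\infty}(F(xe^{z})-F(x))\,\nu_{X}(\dd z)$ with $F(\cdot)=\PP\{I_{[X]}\leq\cdot\}$. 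The task then reduces to passing to the limit on the right-hand side, using the uniform convergence of the distribution functions from Corollary \ref{cor:main} together with the convergence $\nu^{(\epsilon)}\to\nu_{X}$.

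First I would assemble the uniform moment input. From (\ref{mmt}) and the monotone convergence $X_{1}^{(\epsilon)}\nearrow X_{1}$ a.s.\ (see the proof of Theorem \ref{L1}) one gets $0<\Psi^{(\epsilon)}(j)\nearrow\Psi(j)$ for each $j$, and, $\Psi^{(\epsilon)}$ being increasing, $\Psi^{(\epsilon)}(j)\geq\Psi^{(\epsilon)}(1)\geq\Psi(1)/2=:c_{0}>0$ for all $j\geq1$ once $\epsilon$ is small; hence $\E I_{[X^{(\epsilon)}]}^{m}\leq m!/c_{0}^{m}$ uniformly in small $\epsilon$, so that $\sup_{\epsilon}\E e^{aI_{[X^{(\epsilon)}]}}<\infty$ for $0<a<c_{0}$, and therefore $\PP\{I_{[X^{(\epsilon)}]}>t\}\leq C e^{-at}$ uniformly in $\epsilon$. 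Combined with Proposition \ref{pdc}, this also yields $\bar{\Phi}_{[X^{(\epsilon)}]}\to\bar{\Phi}_{[X]}$ locally uniformly on $\{\Re u>-c_{0}\}$, an ingredient available if one prefers to invert the Laplace transform instead.

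Next, the limit passage. Fix $\delta>0$ not charging an atom of $\nu_{X}$ and split $\int_{0+}^{\infty}=\int_{(0,\delta]}+\int_{(\delta,\infty)}$. On $(\delta,\infty)$ the finite measures $\nu^{(\epsilon)}|_{(\delta,\infty)}$ converge weakly to $\nu_{X}|_{(\delta,\infty)}$ (with converging total masses), while $z\mapsto F_{\epsilon}(xe^{z})-F_{\epsilon}(x)$ converges to $z\mapsto F(xe^{z})-F(x)$ uniformly and boundedly by Corollary \ref{cor:main}, so that part of the integral converges to its counterpart (the tail $z\to\infty$ being harmless since $\nu_{X}((\delta,\infty))<\infty$). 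On $(0,\delta]$, the bound $F(xe^{z})-F(x)\leq\big(\sup_{[x,xe^{\delta}]}\phi_{[X]}\big)xe^{\delta}z$ gives $\int_{(0,\delta]}(F(xe^{z})-F(x))\,\nu_{X}(\dd z)\leq C_{x}\int_{(0,\delta]}z\,\nu_{X}(\dd z)\to0$ as $\delta\searrow0$, and the analogous $\epsilon$-term is $\int_{(0,\delta]}(F_{\epsilon}(xe^{z})-F_{\epsilon}(x))\,\nu^{(\epsilon)}(\dd z)\leq\big(\sup_{[x,xe^{\delta}]}\phi_{[X^{(\epsilon)}]}\big)xe^{\delta}\int_{(0,\delta]}z\,\nu^{(\epsilon)}(\dd z)\leq\big(\sup_{[x,xe^{\delta}]}\phi_{[X^{(\epsilon)}]}\big)xe^{\delta}\int_{(0,\delta]}z\,\nu_{X}(\dd z)$, where the last step uses $\nu^{(\epsilon)}((s,\infty))\leq\nu_{X}((s,\infty))$, immediate from (\ref{ups}). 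Provided one has an $\epsilon$-uniform bound $\sup_{y\in[x,xe^{\delta}]}\phi_{[X^{(\epsilon)}]}(y)\leq C(x)$ (for small $\epsilon$), letting $\epsilon\searrow0$ and then $\delta\searrow0$ gives $\phi_{[X^{(\epsilon)}]}(x)\to\phi_{[X]}(x)$ for every $x>0$; the explicit form (\ref{4.11d}) is then just (\ref{apdf}) (Corollary \ref{cor:cdf}) for the compound Poisson process $X^{(\epsilon)}$ with L\'evy measure $\sum_{k\geq1}\upsilon_{\epsilon,k}\updelta_{\epsilon k}$.

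I expect the main obstacle to be exactly that $\epsilon$-uniform local bound on $\phi_{[X^{(\epsilon)}]}$ near the left endpoint. When $X$ is compound Poisson it is immediate, since the Fokker--Planck equation yields $\phi_{[X^{(\epsilon)}]}\leq\nu^{(\epsilon)}((0,\infty))\leq\nu_{X}((0,\infty))<\infty$; in the infinite-activity case one must iterate the inequality $\phi_{[X^{(\epsilon)}]}(y)\leq\int_{y}^{ye}\nu_{X}((\log(w/y),\infty))\phi_{[X^{(\epsilon)}]}(w)\,\dd w+\nu_{X}((1,\infty))$ carefully, exploiting that $\int_{0}^{\delta}\nu_{X}((z,\infty))\,\dd z\to0$ as $\delta\searrow0$ and the uniform exponential tail above, possibly supplemented by the uniform density asymptotics of Minchev and Savov \cite{MS23}. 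A structurally different route, more literally matching the ``Laplace-transform bounds'' strategy, is to invert the Mellin transform $\E I_{[X^{(\epsilon)}]}^{s-1}=\Gf(s)/W_{\Psi^{(\epsilon)}}(s)$ along a vertical line, dominating the integrand via the $\epsilon$-uniform linear growth estimate $|\Psi^{(\epsilon)}(w)|\leq|w|\int_{0+}^{\infty}\min(2,z)\,\nu_{X}(\dd z)$ for $\Re w\geq0$, $|w|\geq1$ (a consequence of $|1-e^{-wz}|\leq\min(2,|w|z)$ and $\nu^{(\epsilon)}((s,\infty))\leq\nu_{X}((s,\infty))$); there the obstacle becomes establishing $\epsilon$-uniform Bernstein--gamma decay bounds for $W_{\Psi^{(\epsilon)}}$ along that line.
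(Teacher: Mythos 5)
Your route is genuinely different from the paper's: you work directly with the integrated Fokker--Planck identity $\phi_{[X^{(\epsilon)}]}(x)=\int_{0+}^{\infty}\big(F_{\epsilon}(xe^{z})-F_{\epsilon}(x)\big)\nu^{(\epsilon)}(\dd z)$ (which is indeed just (\ref{4.6a}) rewritten via Fubini), feed in the uniform convergence of the distribution functions from the estimate (\ref{cdf.e}) in Corollary \ref{cor:main}, and split the L\'evy measure at a level $\delta$. The large-jump part of the argument is fine: weak convergence of $\nu^{(\epsilon)}$ to $\nu_{X}$ away from the origin with converging total masses, a bounded continuous integrand, and the uniform CDF bound together give convergence of that piece, and the observation $\nu^{(\epsilon)}((s,\infty))\leq\nu_{X}((s,\infty))$ is correct since the discretization only moves mass downward. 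If it closed, this argument would actually deliver honest pointwise convergence rather than the $\mathrm{L}^{1}$-type statement the paper's own proof establishes.

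However, there is a genuine gap, and it is exactly where you flag it: the $\epsilon$-uniform bound $\sup_{y\in[x,xe^{\delta}]}\phi_{[X^{(\epsilon)}]}(y)\leq C(x)$ is asserted as a hypothesis (``provided one has\dots'') and never proved. Without it the small-jump estimate $\int_{(0,\delta]}(F_{\epsilon}(xe^{z})-F_{\epsilon}(x))\,\nu^{(\epsilon)}(\dd z)\leq C(x)\,xe^{\delta}\int_{(0,\delta]}z\,\nu_{X}(\dd z)$ collapses, and with it the whole $\epsilon$-then-$\delta$ limit interchange. Your fallback $\phi_{[X^{(\epsilon)}]}\leq\nu^{(\epsilon)}((0,\infty))\leq\nu_{X}((0,\infty))$ only works when $\nu_{X}$ is finite, i.e.\ precisely the case where the theorem is least interesting (the infinite-activity examples in Appendix \ref{A}, such as tempered stable subordinators, are the real target). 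The proposed remedies --- iterating the Fokker--Planck inequality, or uniform Bernstein--gamma decay of the Mellin transform along a vertical line --- are each themselves nontrivial research-level estimates, not routine completions; the known pointwise results of Pardo \emph{et al.} and Minchev--Savov give bounds for a \emph{fixed} subordinator, not bounds uniform over the approximating family $\{X^{(\epsilon)}\}$. The paper avoids this obstruction entirely by never estimating $\phi_{[X^{(\epsilon)}]}$ pointwise: it sets $\tau^{\pm}_{\epsilon}=\int_{0}^{\infty}(\phi_{[X^{(\epsilon)}]}-\phi_{[X]})^{\pm}$, uses the moment convergence of Proposition \ref{pdc} to force $\lim\tau^{+}_{\epsilon}=\lim\tau^{-}_{\epsilon}$, and derives a contradiction from a positive common limit by showing the normalized positive- and negative-part measures would have identical Laplace transforms yet disjoint supports. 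So your proposal is a legitimate alternative strategy, but as written it is conditional on an unproven uniform local density bound and therefore does not constitute a complete proof.
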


\begin{proof}
For any $\epsilon\in(0,1)\cap\mathbb{Q}$, let us denote
\begin{equation}\label{taus}
  \tau_{\epsilon}^\pm:=\int_0^{\infty}(\phi_{[X^{(\epsilon)}]}(x)-\phi_{[X]}(x))^\pm\dd x,
\end{equation}
where $(\cdot)^+$ and $(\cdot)^-$ stand for the positive part and the negative part, respectively. Then, it is sufficient to prove that $\tau^\pm_{\epsilon}\to 0$ as $\epsilon \searrow 0$. For this purpose, recall that a sequence $\{b_n\}^{\infty}_{n=1}$ of real numbers converges to a number $b$ if and only if any subsequences of $\{b_n\}^{\infty}_{n=1}$ has a convergent subsequence with the same limit $b$. We shall proceed using this fact along with a contradiction argument.

First, observe that $\{\tau_{\epsilon}^+:\;\epsilon\in(0,1)\cap\mathbb{Q}\}$ and $\{\tau_{\epsilon}^-:\;\epsilon\in(0,1)\cap\mathbb{Q}\}$ are both bounded sequences of nonnegative numbers, which implies that any subsequence of them has a convergent subsequence. Hence, it is enough to prove that each subsequence of them has a subsequence that converges to $0$.
Since
\begin{equation*}
  \E I_{[X]}^m-\E I_{[X^{(\epsilon)}]}^m=\int_0^{\infty}x^m(\phi_{[X^{(\epsilon)}]}(x)-\phi_{[X]}(x))^+\dd x-\int_0^{\infty}x^m(\phi_{[X^{(\epsilon)}]}(x)-\phi_{[X]}(x))^-\dd x,
\end{equation*}
for each $m\in\mathbb{Z}_{+}$, we have that
\begin{equation*}
  \lim_{\epsilon \searrow 0}\int_0^{\infty}x^m(\phi_{[X^{(\epsilon)}]}(x)-\phi_{[X]}(x))^+\dd x=\lim_{\epsilon \searrow 0}\int_0^{\infty}x^m(\phi_{[X^{(\epsilon)}]}(x)-\phi_{[X]}(x))^-\dd x,
\end{equation*}
and this leads into the following relation:
\begin{equation}\label{poly}
  \lim_{\epsilon \searrow 0}\int_0^{\infty}P(x)(\phi_{[X^{(\epsilon)}]}(x)-\phi_{[X]}(x))^+\dd x=\lim_{\epsilon \searrow 0}\int_0^{\infty}P(x)(\phi_{[X^{(\epsilon)}]}(x)-\phi_{[X]}(x))^-\dd x,
\end{equation}
for an arbitrary polynomial $P(x)$. Applying the relation (\ref{poly}) to the case $P\equiv1$, we conclude that
\begin{equation*}
  \lim_{\epsilon \searrow 0}\tau^+_{\epsilon}= \lim_{\epsilon \searrow 0}\tau^-_{\epsilon}.
\end{equation*}
Clearly, this relation holds true for any subsequence $\{\bar{\epsilon}\}$ of $\{\epsilon\}$.\footnote{Here, with some abuse of notation, we denote a subsequence of $\{\epsilon\}$ by itself.} Hence, it suffices to show that the condition
\begin{equation}\label{aa}
  \lim_{\epsilon \searrow 0}\tau^+_{\epsilon}= \lim_{\epsilon \searrow 0}\tau^-_{\epsilon}>0
\end{equation}
will lead to a contradiction. It results from (\ref{aa}) that there exists a constant $\gamma>0$ such that $\max\{\tau^+_{\epsilon},\tau^-_{\epsilon}\}\geq \gamma$ for any $\epsilon\in(0,1)\cap\mathbb{Q}$. Define the following family of probability measures:
\begin{equation}\label{Qe}
  \Q^+_{\epsilon}(B):=\frac{\int_B(\phi_{[X^{(\epsilon)}]}(x)-\phi_{[X]}(x))^+\dd x}{\tau^+_{\epsilon}}, \quad \Q^-_{\epsilon}(B):=\frac{\int_B(\phi_{[X^{(\epsilon)}]}(x)-\phi_{[X]}(x))^-\dd x}{\tau^-_{\epsilon}},
\end{equation}
where $B$ is any Borel set of $\R_+$. Clearly, with these probability measures and given (\ref{aa}), (\ref{poly}) can be equivalently written as
\begin{equation}\label{Qpoly}
  \lim_{\epsilon \searrow 0}\int_0^{\infty}P(x)\Q_{\epsilon}^+(\dd x)=\lim_{\epsilon \searrow 0}\int_0^{\infty}P(x)\Q_{\epsilon}^-(\dd x).
\end{equation}

We claim that $\{\Q^+_{\epsilon}:\;\epsilon\in(0,1)\cap\mathbb{Q}\}$ and $\{\Q^-_{\epsilon}:\;\epsilon\in(0,1)\cap\mathbb{Q}\}$ are tight families of probability measures. Indeed, note that for any $\kappa>1$, we have
\[
 \Q^+_{\epsilon}([\kappa, \infty))\le \frac{1}{\gamma}\bigg(\int_\kappa^{\infty}\phi_{[X^{(\epsilon)}]}(x)\dd x+\int_\kappa^{\infty}\phi_{[X]}(x)\dd x\bigg).
\]
As the family $\{X^{(\epsilon)}:\;\epsilon\in(0,1)\cap\mathbb{Q}\}\cup\{X\}$ is clearly uniformly integrable, the right side of the last relation can be made any small uniformly for all $\epsilon$ and $\kappa$, which implies the tightness of $\{\Q^+_{\epsilon}\}$. The same argument goes for $\{\Q^-_{\epsilon}\}$, hence its tightness also. Thus, there are probability measures $\Q^+$ and $\Q^-$ defined on $\mathbb{R}_{+}$ that are limits (in distribution) of these two tight families, i.e.,
\[
 \Q_{\epsilon}^+\overset{\rm d}{\to}\Q^+, \quad \Q_{\epsilon}^-\overset{\rm d}{\to}\Q^-,\quad\text{as }\epsilon\searrow0,
\]
which implies that for any $u\geq0$,
\begin{equation}\label{limqpm}
  \lim_{\epsilon \searrow 0}\int_0^{\infty}e^{-ux}\Q_{\epsilon}^+(\dd x)=\int_0^{\infty}e^{-ux}\Q^+(\dd x), \quad
  \lim_{\epsilon \searrow 0}\int_0^{\infty}e^{-ux}\Q_{\epsilon}^-(\dd x)=\int_0^{\infty}e^{-ux}\Q^-(\dd x).
\end{equation}

Next, we shall show that $\int_0^{\infty}e^{-ux}\Q^+(\dd x)=\int_0^{\infty}e^{-ux}\Q^-(\dd x)$ for any $u\geq 0$. Towards this end, fix $u>0$. From (\ref{limqpm}), for any small $\zeta_{u}>0$ (depending only on $u$), there exists $\eta_0\equiv\eta_{0}(\zeta_{u})\in(0,1)$ such that
\begin{equation}\label{qepsi}
  \bigg|\int_0^{\infty}e^{-ux}\Q^+(\dd x)-\int_0^{\infty}e^{-ux}\Q^-(\dd x)\bigg|\le 2\zeta_{u}+ \bigg|\int_0^{\infty}e^{-ux}\Q_{\epsilon}^+(\dd x)-\int_0^{\infty}e^{-ux}\Q_{\epsilon}^-(\dd x)\bigg|,
\end{equation}
for any $\epsilon \in (0, \eta_0]\cap \mathbb{Q}$.
A straightforward expansion gives $e^{-ux}=P_{n,u}(x)+((-ux)^n/n!)e^{-\theta_{n,u}(x)}$, where for $n\in\mathbb{Z}_{++}$ and $x>0$, $P_{n,u}(x)=\sum_{i=0}^{n-1}(-ux)^i/i!$ and $0\le \theta_{n,u}(x)\le ux$. Thus,
\begin{align}\label{qqep]}
  &\quad\bigg| \int_0^{\infty}e^{-ux}\Q_{\epsilon}^+(\dd x)-\int_0^{\infty}e^{-ux}\Q_{\epsilon}^-(\dd x)\bigg| \nonumber\\
  &\le\bigg |\int_0^{\infty}P_{n,u}(x)\Q_{\epsilon}^+(\dd x)-\int_0^{\infty}P_{n,u}(x)\Q_{\epsilon}^-(\dd x)\bigg| \nonumber\\
  &\quad+\bigg|\int_0^{\infty}\frac{(-ux)^n}{n!}e^{-\theta_{n,u}(x)}\Q_{\epsilon}^+(\dd x)-\int_0^{\infty}\frac{(-ux)^n}{n!}e^{-\theta_{n,u}(x)}\Q_{\epsilon}^-(\dd x)\bigg|.
\end{align}
For the absolute difference, using (\ref{taus}) and (\ref{Qe}) we have that
\begin{align}\label{qqq}
  &\quad\bigg|\int_0^{\infty}\frac{(-ux)^n}{n!}e^{-\theta_{n,u}(x)}\Q_{\epsilon}^+(\dd x)-\int_0^{\infty}\frac{(-ux)^n}{n!}e^{-\theta_{n,u}(x)}\Q_{\epsilon}^-(\dd x)\bigg| \nonumber\\
  &=\frac{u^n}{n!} \bigg|\int_0^{\infty}x^ne^{-\theta_{n,u}(x)}\frac{1}{\tau_{\epsilon}^+}(\phi_{[X^{(\epsilon)}]}(x)-\phi_{[X]}(x))^+\dd x \nonumber\\
  &\quad-\int_0^{\infty}x^ne^{-\theta_{n,u}(x)}\frac{1}{\tau_{\epsilon}^-}(\phi_{[X^{(\epsilon)}]}(x)-\phi_{[X]}(x))^-\dd x\bigg| \nonumber\\
  &\le \frac{u^n}{n!\gamma}\int_0^{\infty}x^n(\phi_{[X^{(\epsilon)}]}(x)-\phi_{[X]}(x))^+\dd x\nonumber\\
  &\le \frac{u^n}{n!\gamma}\big(\E I_{[X^{(\epsilon)}]}^n+\E I_{[X]}^n\big) \nonumber\\
  &=\frac{u^n}{\gamma}\bigg(\frac{1}{\prod^{n}_{j=1}\Psi^{(\epsilon)}(j)}+\frac{1}{\prod^{n}_{j=1}\Psi(j)}\bigg) \nonumber\\
  &\le \frac{2}{\gamma}\frac{u^n}{\prod^{n}_{j=1}\Psi^{(\epsilon)}(j)},\quad u\geq0,
\end{align}
where the second last line results from applying (\ref{mmt}) (with $\Psi_{[X^{\epsilon}]}$ and $\Psi_{[X]}$ recalled to be the corresponding Laplace exponents), and the last inequality uses the fact that $\Psi^{(\epsilon)}(u)\nearrow\Psi(u)$ for each $u\geq 0$ as $\epsilon \searrow 0$, which is because $X^{(\epsilon)}_{1}\nearrow X_{1}$ $\PP$-a.s. Combining (\ref{qepsi}), (\ref{qqep]}), and (\ref{qqq}), we obtain that
\begin{align*}
 \bigg|\int_0^{\infty}e^{-ux}\Q^+(\dd x)-\int_0^{\infty}e^{-ux}\Q^-(\dd x) \bigg|&\le2\zeta_{u}+\bigg|\int_0^{\infty}P_{n,u}(x)\Q_{\epsilon}^+(\dd x)-\int_0^{\infty}P_{n,u}(x)\Q_{\epsilon}^-(\dd x)\bigg| \\
 &\quad+\frac{2}{\gamma}\frac{u^n}{\prod^{n}_{j=1}\Psi^{(\epsilon)}(j)},
\end{align*}
for any $\epsilon\in (0,\eta_0]\cap\mathbb{Q}$ and any $n\in\mathbb{Z}_{++}$. Now, fix any $\eta_1\in (0, \eta_0)\cap \mathbb{Q}$. Since clearly $\Psi^{(\eta_1)}(n)\nearrow \infty$ as $n\nearrow\infty$, there is $n_1$ sufficiently large (depending on $\eta_1$) such that for all $n\geq n_1$,
\begin{equation}\label{qqp}
 \frac{2}{\gamma}\frac{u^n}{\prod^{n}_{j=1}\Psi^{(\eta_{1})}(j)}\le \zeta_{u}.
\end{equation}
By the increase of $\Psi^{(\epsilon)}(u)$ in $\epsilon$ again, we see that (\ref{qqp}) remains true if we replace $\eta_1$ with any $\epsilon \in (0, \eta_1]\cap \mathbb{Q}$. It then follows that
\begin{equation*}
  \bigg|\int_0^{\infty}e^{-ux}\Q^+(\dd x)-\int_0^{\infty}e^{-ux}\Q^-(\dd x)\bigg|\le3\zeta_{u}+\bigg|\int_0^{\infty}P_{n_1,u}(x)\Q_{\epsilon}^+(\dd x)-\int_0^{\infty}P_{n_1,u}(x)\Q_{\epsilon}^-(\dd x)\bigg|,
\end{equation*}
for all $\epsilon \in (0, \eta_1]\cap \mathbb{Q}$. Using (\ref{Qpoly}) further implies that there exists sufficiently small $\eta_2\le \eta_1$ such that $\big|\int_0^{\infty}P_{n_1,u}(x)\Q_{\epsilon}^+(\dd x)-\int_0^{\infty}P_{n_1,u}(x)\Q_{\epsilon}^-(\dd x)\big|\leq\zeta_{u}$ for all $\epsilon\leq\eta_2$, so that $\big|\int_0^{\infty}e^{-ux}\Q^+(\dd x)-\int_0^{\infty}e^{-ux}\Q^-(\dd x)\big|\le 4 \zeta_{u}$. In particular, by the arbitrary smallness of $\zeta_{u}$, we have shown that $\int_0^{\infty}e^{-ux}\Q^+(\dd x)=\int_0^{\infty}e^{-ux}\Q^-(\dd x)$ for each fixed $u\geq 0$. Therefore, the Laplace transforms of the two probability measures $\Q^+$ and $\Q^-$ coincide, and we conclude that $\Q^+=\Q^-$ a.e.

Now, define $\mathbb{A}_{\epsilon}^+:=\{x\in [0, \infty): \phi_{[X^{(\epsilon)}]}(x)\geq \phi_{[X]}(x) \}$ and $\mathbb{A}_{\epsilon}^-:=\{x\in [0, \infty): \phi_{[X^{(\epsilon)}]}(x)<\phi_{[X]}(x) \}$. Clearly, $\mathbb{A}_{\epsilon}^+$ and $\mathbb{A}_{\epsilon}^-$ are disjoint and their union equals $\R_+$ for each $\epsilon \in (0, 1]\cap \mathbb{Q}$. Also, observe that $\mathrm{supp} \Q_{\epsilon}^+ \subset \mathbb{A}_{\epsilon}^+$ and $\mathrm{supp} \Q_{\epsilon}^{-} \subset \mathbb{A}_{\epsilon}^{-}$ (in the sense of almost sure inclusion under the Lebesgue measure).
From these, it is easy to see that $\mathrm{supp}\Q^+\subset \liminf \mathbb{A}_{\epsilon}^+$ and $\mathrm{supp}\Q^{-}\subset \liminf \mathbb{A}_{\epsilon}^-$. The two sets $\liminf \mathbb{A}_{\epsilon}^+$ and $\liminf \mathbb{A}_{\epsilon}^-$ are disjoint and hence $\mathrm{supp} \Q^+\cap \mathrm{supp}\Q^{-}=\emptyset$. As it has been shown that $\Q^+\equiv \Q^-$, we conclude that $\Q^+\equiv \Q^-=0$, but this contradicts (\ref{aa}), as with $\Q^+\equiv \Q^-=0$, we should have $ \lim_{\epsilon \searrow 0}\tau^+_{\epsilon}=\lim_{\epsilon \searrow 0}\int_0^{\infty}(\phi_{[X^{(\epsilon)}]}(x)-\phi_{[X]}(x))^+\dd x=0$ and $\lim_{\epsilon \searrow 0}\tau_{\epsilon}^-=\lim_{\epsilon \searrow 0}\int_0^{\infty}(\phi_{[X^{(\epsilon)}]}(x)-\phi_{[X]}(x))^-\dd x=0$.

Finally, note that the formula (\ref{4.11d}) for $\phi_{[X^{\epsilon}]}$ is directly obtained from (\ref{apdf}). This completes the proof.\footnote{We acknowledge that the calculations in this proof benefited from valuable discussions with Mikl\'os R\'asonyi.}
\end{proof}

Corollary \ref{cor:main} and Theorem \ref{main2} together not only provide a unified procedure for exploring the exact distribution of the exponential functional of any subordinator but also give a useful perspective towards analyzing the distribution of the exponential function of integer-valued L\'{e}vy processes, or more generally, those having finite-variation sample paths.\footnote{In such cases, in the absence of drift, one may ``reasonably'' conjecture the density function in the form of a double Dirichlet series (extending (\ref{4.7})); e.g., see Chhaibi \cite{C16}. A detailed analysis is left for further research.}

\bigskip

\noindent \textbf{Specification analysis} \medskip

\noindent Although the formulae (\ref{4.11}), (\ref{4.12}), and (\ref{4.11d}) apply to arbitrary positive-supported L\'{e}vy measures, here we shall concentrate on two cases with absolutely continuous L\'{e}vy measures, while only considering the density function. \medskip

\noindent \underline{Case 1.}\quad Compound Poisson-exponential processes \smallskip\\
This case aims to benchmark against established distributions from the literature, demonstrating the analytical potential of the limit representations. To be more specific, the L\'{e}vy measure is given by $\nu_{X}(\dd z)=ae^{-bz}\dd z$, $z>0$, where $a>0$ and $b>0$ are parameters, and is clearly a finite measure. Equivalently, $X$ is a compound Poisson process with intensity $a/b$ and $\mathrm{Exp}(b)$-distributed jumps.

By applying Corollary \ref{cor:main}, we have that for any $\epsilon\in(0,1)\cap\mathbb{Q}$,
\begin{equation*}
  \upsilon_{\epsilon,k}=\int^{\epsilon(k+1)}_{\epsilon k}ae^{-bz}\dd z=\frac{a}{b}(1-e^{-b\epsilon})e^{-bk\epsilon},\quad\sum^{\infty}_{k=1}\upsilon_{\epsilon,k}=\frac{a}{b}e^{-b\epsilon}.
\end{equation*}
Thus, by Theorem \ref{main2}, for a compound Poisson-exponential process, (\ref{4.11d}) and (\ref{4.12}) specialize to
\begin{equation}\label{A.1}
  \phi_{[X]}(x)=\lim_{\epsilon\searrow0}\frac{ae^{-b\epsilon}}{b\sum^{\infty}_{j=0}c_{\epsilon,j}e^{-\epsilon j}}\sum^{\infty}_{j=0}c_{\epsilon,j}\exp\bigg(-\frac{a}{b}e^{\epsilon(j-b)}x\bigg), \quad x>0,
\end{equation}
and
\begin{equation}\label{A.2}
  c_{\epsilon,0}=1,\quad c_{\epsilon,j}=\frac{e^{b\epsilon}-1}{1-e^{\epsilon j}}\sum^{j}_{k=1}c_{\epsilon,j-k}e^{(1-b)k\epsilon},\quad j\in\mathbb{Z}_{++}.
\end{equation}

Here, the limit in (\ref{A.1}) can be computed explicitly. To briefly explain, note that the limit of the recurrence (\ref{A.2}) as $\epsilon\searrow0$ is
\begin{equation*}
  \bar{c}_{0}=1,\quad \bar{c}_{j}=-\frac{b}{j}\sum^{j}_{k=1}\bar{c}_{j-k},\quad j\in\mathbb{Z}_{++},
\end{equation*}
with $\bar{c}_{j}:=\lim_{\epsilon\searrow0}c_{\epsilon,j}$, $j\in\mathbb{Z}_{+}$, which via a binomial expansion argument easily solves for
\begin{equation*}
  \bar{c}_{j}=(-1)^{j}\binom{b}{j}.
\end{equation*}
Considering the second series in (\ref{A.1}) (which is absolutely convergent for fixed $\epsilon$), we have by expanding the exponent in the second series in (\ref{A.1}) that
\begin{align*}
  \sum^{\infty}_{j=0}c_{\epsilon,j}\exp\bigg(-\frac{a}{b}e^{\epsilon(j-b)}x\bigg) &\sim\sum^{\infty}_{j=0}\bar{c}_{j}\exp\bigg(-\frac{a}{b}(1+\epsilon(j-b))x\bigg) \\
  &=e^{-ax/b+a\epsilon x}(1-e^{-a\epsilon x/b})^{b} \\
  &\sim\bigg(\frac{ax}{b}\bigg)^{b}e^{-ax/b}\epsilon^{b},\quad\text{as }\epsilon\searrow0,
\end{align*}
where ``$\sim$'' denotes asymptotic equivalence and the integration of which (with respect to $x\in\mathbb{R}_{++}$) yields
\begin{equation*}
  \sum^{\infty}_{j=0}c_{\epsilon,j}e^{-\epsilon j}\sim\Gf(b+1)\epsilon^{b},\quad\text{as }\epsilon\searrow0.
\end{equation*}
Plugging these into (\ref{A.1}) gives that
\begin{equation}\label{A.3}
  \phi_{[X]}(x)=\lim_{\epsilon\searrow0}\frac{ae^{-b\epsilon}}{b\Gf(b+1)}\bigg(\frac{ax}{b}\bigg)^{b}e^{-ax/b} =\frac{(a/b)^{b+1}}{\Gf(b+1)}x^{b}e^{-ax/b},\quad x>0.
\end{equation}

This case can be recovered from the setting of Pardo \text{et al.} \cite[Example 2]{PRVS13}, with the parameter changes $\beta\mapsto a/b$, $a\mapsto 1$, and $s\mapsto b+1$ in their notation, under which the density function formula (\ref{A.3}) coincides with Pardo \text{et al.} \cite[Equation (4.1)]{PRVS13}. From (\ref{A.3}) we recognize that $I_{[X]}\overset{\rm d}{=}\mathrm{Gamma}(b+1,a/b)$ (gamma distribution).

\medskip

\noindent \underline{Case 2.}\quad Tempered stable subordinators \smallskip\\
Let $X$ be a tempered stable subordinator, which is characterized by three parameters $a>0$, $b>0$, and $\chi\in[0,1)$. The L\'{e}vy measure is given by $\nu_{X}(\dd z)=(ae^{-bz}/z^{\chi+1})\dd z$, $z>0$. Tempered stable subordinators constitute a fairly general class of infinitely-active subordinators including the gamma process (when $\chi=0$) and inverse Gaussian process (when $\chi=1/2$) as special cases, and have been widely applied to model risky asset prices, especially capturing the price--volume relationship; see, e.g., K\"{u}chler and Tappe \cite[Section 10]{KT13} and Fei and Xia \cite[Section 4.2]{FX24}. In this case, the distribution of $I_{[X]}$ remains largely unknown.\footnote{We mention Minchev and Savov \cite[Section 4.2]{MS23} for the tail behaviors in the case of a gamma subordinator ($\chi=0$).}

By applying Corollary \ref{cor:main} again, we have that for any $\epsilon$,
\begin{equation*}
  \upsilon_{\epsilon,k}=\int^{\epsilon(k+1)}_{\epsilon k}\frac{ae^{-bz}}{z^{\chi+1}}\dd z=ab^{\chi}(\Gf(-\chi,b\epsilon k)-\Gf(-\chi,b\epsilon(k+1))),\quad\sum^{\infty}_{k=1}\upsilon_{\epsilon,k}=ab^{\chi}\Gf(-\chi,b\epsilon),
\end{equation*}
where $\Gf(\cdot,\cdot)$ denotes the upper incomplete gamma function. Specifying (\ref{4.11d}) and (\ref{4.12}) from Theorem \ref{main2} then gives the following limit representation:
\begin{equation}\label{A.4}
  \phi_{[X]}(x)=\lim_{\epsilon\searrow0}\frac{ab^{\chi}\Gf(-\chi,b\epsilon)}{\sum^{\infty}_{j=0}c_{\epsilon,j}e^{-\epsilon j}}\sum^{\infty}_{j=0}c_{\epsilon,j}\exp(-ab^{\chi}e^{\epsilon j}\Gf(-\chi,b\epsilon)x),\quad x>0,
\end{equation}
with
\begin{equation*}
  c_{\epsilon,0}=1,\quad c_{\epsilon,j}=\frac{1}{(1-e^{\epsilon j})\Gf(-\chi,b\epsilon)}\sum^{j}_{k=1}e^{\epsilon k}c_{\epsilon,j-k}(\Gf(-\chi,b\epsilon k)-\Gf(-\chi,b\epsilon(k+1))),\quad j\in\mathbb{Z}_{++}.
\end{equation*}
While a valid limit representation, computing (\ref{A.4}) explicitly appears to be much harder (if possible at all) present the incomplete gamma functions, a particular subtlety being that both infinite series possess a growth (rather than decay) rate in $\epsilon$ because $\nu_{X}$ is now an infinite measure. A rigorous analysis of the limit in (\ref{A.4}), or that in the general case as in Theorem \ref{main2}, therefore points to a meaningful direction for future development.

\smallskip

We briefly illustrate the density functions from the above two cases to show the numerical validity of the proposed limit representation, omitting detailed numerical procedures for brevity. In the compound Process exponential process case, we take $a=b=1$ and implement the formula (\ref{A.1}) using the choice $\epsilon=0.01$ and truncating the infinite series at 100, together with the exact form in (\ref{A.3}) for comparison. As Figure \ref{density_phi[X]} shows, when not analytically reduced, the limit representation at least gives an adequate numerical approximation. In the tempered stable subordinator case, we set $\chi=0$, corresponding to a gamma subordinator, while keeping $a=b=1$. Then, we compute an approximate density function using (\ref{A.4}), with $\epsilon=4\times10^{-4}$ and the infinite series truncated at 5000, also displayed in Figure \ref{density_phi[X]}.

\begin{figure}[H]
  \centering
  \begin{minipage}[l]{0.49\linewidth}
  \centering
  \includegraphics[scale=0.3]{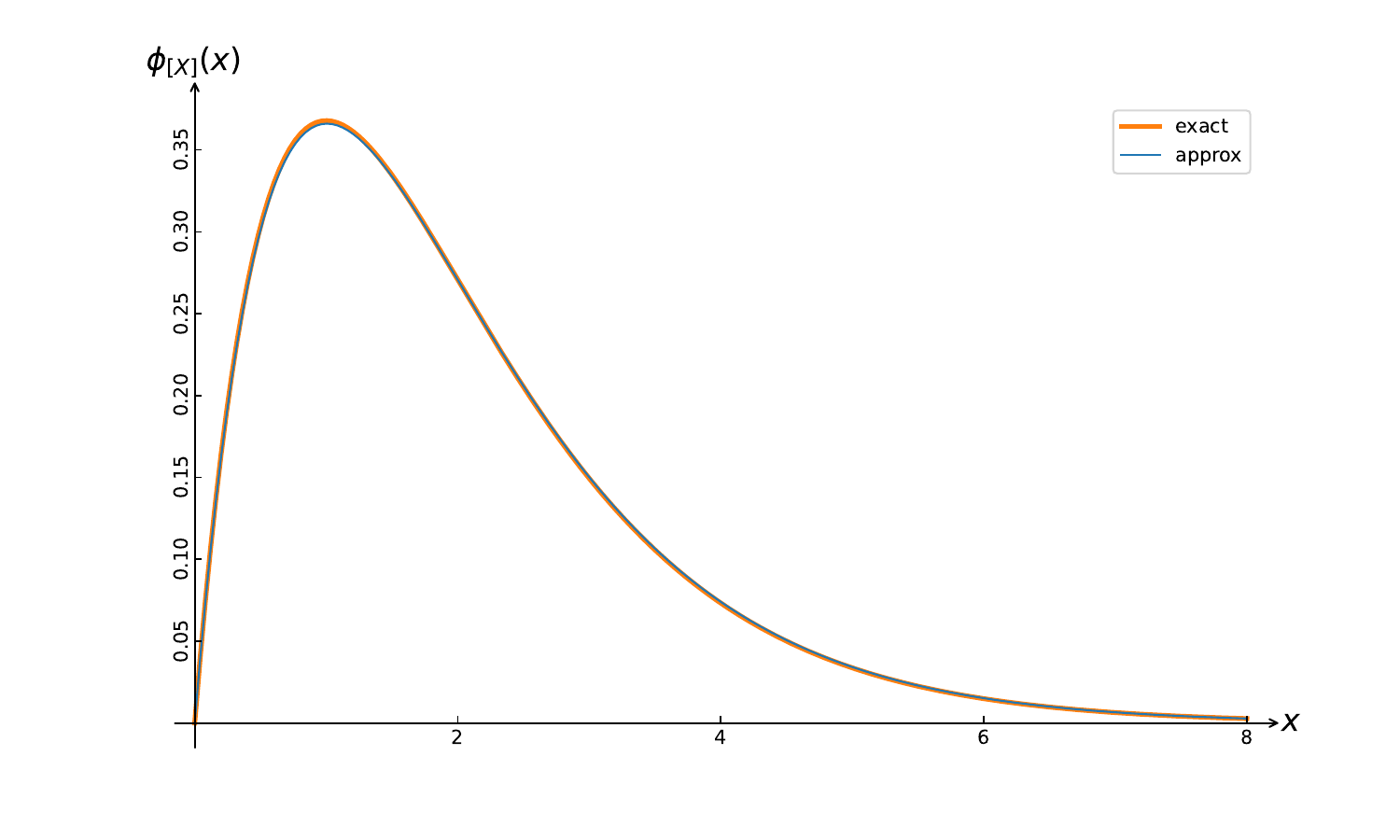}
  \caption*{\small (compound Poisson-exponential process case; \\ $\mathrm{Gamma}(2,1)$ distribution)}
  \end{minipage}
  \begin{minipage}[r]{0.49\linewidth}
  \includegraphics[scale=0.3]{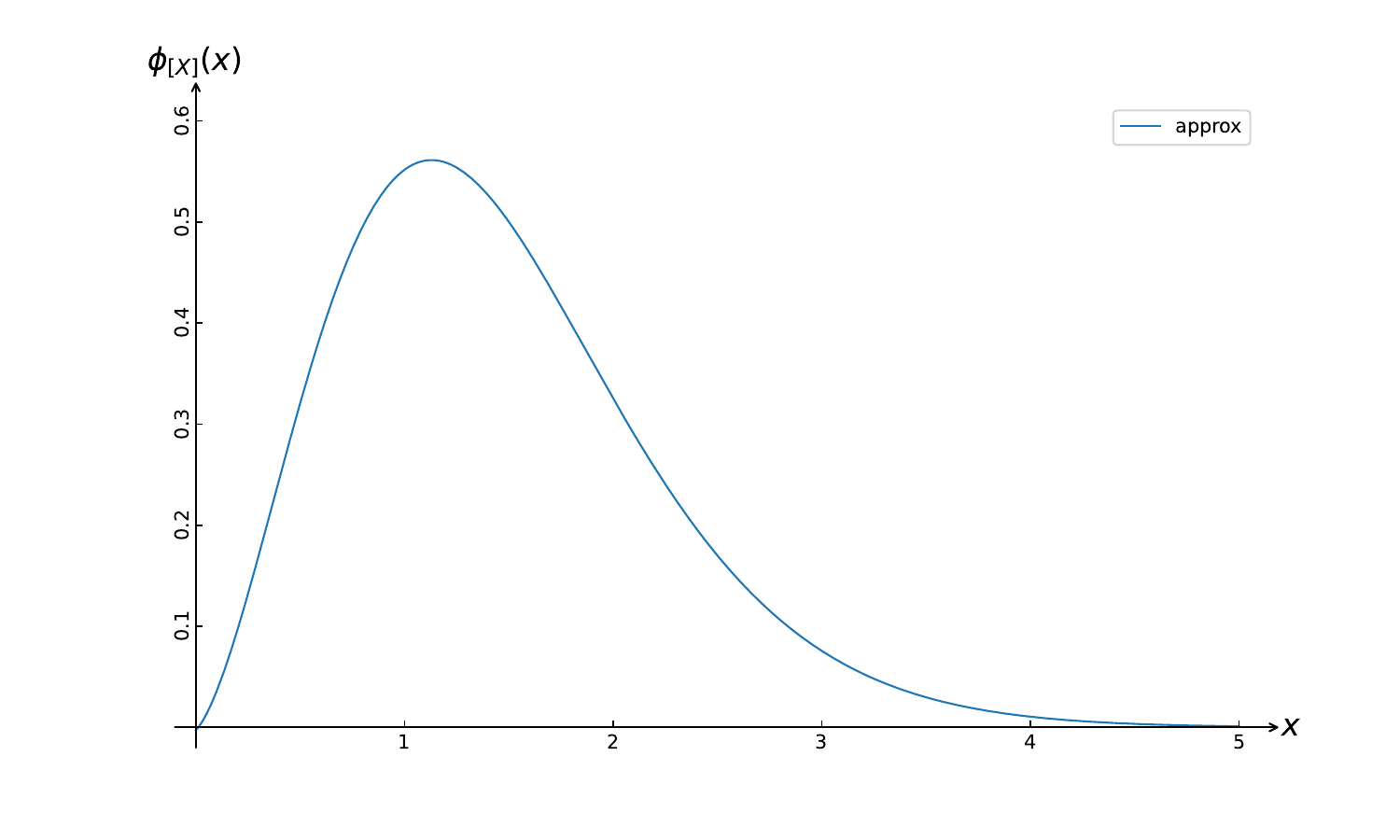}
  \caption*{\small (gamma subordinator case; \\ unnamed distribution)}
  \end{minipage}
  \caption{Density function of $I_{[X]}$}
  \label{density_phi[X]}
\end{figure}

\end{appendices}

\end{document}